\newcommand{\R}{\mathbb R}
\newcommand{\Z}{\mathbb Z}
\newcommand{\N}{\mathbb N}
\newcommand{\CC}{\mathbb C}
\newcommand{\const}{\mathrm{const}}
\newcommand{\Sym}{\mathrm{Sym}}
\newcommand{\rank}{\mathrm{rank}\,}
\newcommand{\id}{\mathrm{id}}
\theoremstyle{plain}
\newtheorem{theorem}{Theorem}[section]
\newtheorem{proposition}[theorem]{Proposition}
\newtheorem{corollary}[theorem]{Corollary}
\theoremstyle{definition}
\newtheorem{definition}[theorem]{Definition}
\newtheorem{remark}[theorem]{Remark}
\begin{document}

\title
{Symplectic classification for universal unfoldings \\ of $A_n$ singularities in integrable systems}

\date{}

\author{Elena A. Kudryavtseva} 

\maketitle
 
\begin{abstract}
In the present paper, we obtain real-analytic symplectic normal forms for integrable Hamiltonian systems with $n$ degrees of freedom near singular points having the type ``universal unfolding of $A_n$ singularity'', $n\ge1$ (local singularities), as well as near compact orbits containing such singular points (semi-local singularities). 
We also obtain a classification, up to real-analytic symplectic equivalence, of real-analytic Lagrangian foliations in saturated neighborhoods of such singular orbits (semi-global classification). 
These singularities (local, semi-local and semi-global ones) are structurally stable.
It turns out that all integrable systems are symplectically equivalent near their singular points of this type (thus, there are no local symplectic invariants).
A complete semi-local (respectively, semi-global) symplectic invariant of the singularity 
is given by a tuple of $n-1$ (respectively $n-1+\ell$) real-analytic function germs in $n$ variables, where $\ell$ is the number of connected components of the complement of the singular orbit in the fiber.
The case $n=1$ corresponds to non-degenerate singularities (of elliptic and hyperbolic types) of one-degree of freedom Hamiltonians; their symplectic classifications were known.
The case $n=2$ corresponds to parabolic points, parabolic orbits and cuspidal tori, and the case $n\ge3$ --- to their higher-dimensional analogs.

\hspace{-5.7mm} \textit{Keywords}: 
integrable Hamiltonian systems, universal unfolding of $A_n$ singularity, symplectic invariants, symplectic classification, structurally stable singularities, period mapping.
\end{abstract}

\section{Introduction and summary of the results}

In this work, we study the problem of symplectic classification of integrable systems. Recall that an integrable system is specified by a triple $(M, \omega, F)$, where $(M,\omega)$ is a symplectic $2n$-manifold and
$$
\mathcal F = (F_1, \ldots, F_n) \colon M \to \mathbb R^n
$$
is an \textit{integral} (also called an \textit{energy-momentum}) \textit{map}, 
consisting of $n$ almost everywhere independent functions $F_i$ 
that pairwise Poisson commute:
$$
\omega(X_{F_i}, X_{F_j})= 0 \quad \mbox{for all} \quad i, j = 1, \ldots, n,
$$ 
where $X_{F_i}$ is defined by the rule $\omega(X_{F_i}, \cdot) = - \textup{d}F_i.$ 

We will assume that the symplectic manifold and the energy-momentum map are real-analytic (note that the $C^\infty$-smooth case is studied in~\cite{KudryavtsevaMartynchuk2021,KudryavtsevaMartynchuk2023,MartynchukVu-Ngoc2023}).
The function $F_1$ is typically the Hamiltonian $H$ of the system, generating the dynamics via the Hamiltonian vector field $X_H$, and the number $n$ is by definition the number of degrees of freedom. 
It will be more convenient for us to assume that the Hamilton function is $H=F_n$.
We will sometimes write $\mathcal F = (H, F_1, \ldots, F_{n-1})$ regarding $H=F_n$ as $F_0$.

A point $P\in M$ is called {\em singular} if $\rank(\textup{d}\mathcal F(P))<n$; the integer $n-\rank(\textup{d}\mathcal F(P))$ is called {\em corank} of the singular point $P$. The set of singular values 
\begin{equation} \label {eq:Sigma}
\Sigma := \{\mathcal F(P) \mid \rank(\textup{d}\mathcal F(P))<n\}
\end{equation}
is called the {\em bifurcation diagram}.
The integral map $\mathcal F$ naturally gives rise to a (singular) Lagrangian foliation (the {\em Liouville foliation}) on $M$ whose fibers are connected components of the common level sets 
$$
\mathcal F^{-1}(f) := \{F_1 = f_1, \ldots, F_n = f_n\}, \quad f = (f_1, \ldots, f_n) \in \mathbb R^n.
$$
One can also write this foliation as the quotient map $\pi \colon M \to B,$ where $B$ is the set of connected components of $\mathcal F^{-1}(f)$, $f \in \mathbb R^n,$ equipped with the quotient topology \cite{Fomenko1988}. The space $B$ is usually referred to as the \textit{bifurcation complex} (or the \textit{unfolded momentum domain}) of the system. Note that all of the fibers of $\pi \colon M \to B$ are invariant under the flows of the Hamiltonian vector fields $X_{F_1}, \ldots, X_{F_n}.$

The problem of symplectic classification of integrable systems amounts to classifying the corresponding Liouville foliations $\pi \colon M \to B$ up to a `symplectic equivalence'. Usually, one considers 
`fiberwise' (= \textit{left-right}) \textit{symplectic equivalence}, where two such foliations are called equivalent if they are related by a symplectomorphism sending fibers to fibers (note that right symplectic equivalence is studied in \cite{DeVerdiere1979, Delzant1988, Francoise1988, Bolsinov2018, MartynchukVu-Ngoc2023}).

\begin{definition} \label{definition/equivalence}
Two integrable Hamiltonian systems $(M, \omega, \mathcal F)$ and $(\tilde M, \tilde{\omega}, \tilde{\mathcal F})$ are called
\textit{(left-right)
\footnote{
see for instance \cite{Hanssmann2006}.} symplectically equivalent} if there exists a real-analytic symplectomorphism 
$$
\Phi \colon  (M,\omega) \to (\tilde M, \tilde{\omega})
$$ 
and a real-analytic diffeomorphism $\phi \colon \R^n \to \R^n$ such that 
\begin{equation} \label {eq:rigid}
\phi \circ \mathcal F = \tilde{\mathcal F} \circ \Phi.
\end{equation}
\end{definition}

Finally, we note that symplectic classification of integrable systems can be approached from the 

i) local (in a neighborhood of a singular point),

ii) semi-local (in a neighborhood of a singular orbit),

iii) semi-global (in a saturated neighborhood of a singular fiber)

iv) or global perspectives. \\
For a more in depth introduction to this and related problems, we refer the reader to
\cite{Bolsinov2004, Bolsinov2006}.

A natural set of symplectic invariants of an integrable system is given by its action variables. Assume for the moment that all of the fibers $\mathcal F^{-1}(f)$ are compact and connected. Then the action variables can be defined
by the Mineur--Arnol'd formula
\begin{equation} \label {eq:action}
I_i = \dfrac{1}{2\pi} \int_{\gamma_i} \alpha, \quad \textup{d}\alpha = \omega, 
\end{equation}
where  
$\gamma_i$ are independent homology cycles on a regular fiber $F^{-1}(f)$ continuously depending on $f$ (recall that regular, compact and connected fibers $F^{-1}(f)$ are $n$-dimensional tori by the Arnol'd--Liouville theorem \cite{Arnold1968, Arnold1978}). Any symplectomorphism $\Phi \colon  M \to \tilde M$ that respects the foliations induced by the integral maps $\mathcal F$ and $\tilde{\mathcal F}$ must send the set of action variables
of $(M, \omega, \mathcal F)$ to some set of action variables of $(\tilde M, \tilde{\omega}, \tilde{\mathcal F})$. Therefore, the action variables are ``rigid'' first integrals in the sense that they are uniquely defined (up to affine $\mathbb R^n \rtimes \textup{SL}(n,\mathbb Z)$ transformations) on a neighborhood of a regular fiber; in other words, they are ``symplectic invariants''.
We say that action variables form a \emph{complete set} of symplectic invariants for a given class of integrable systems if (see also \cite{Bolsinov2018_2} and Corollary~\ref{cor:An:semiloc} below) for every two integrable systems $\mathcal F^1 \colon M^1 \to \mathbb R^n$ and $\mathcal F^2 \colon M^2 \to \mathbb R^n$ from this class, 
the existence of a diffeomorphism $\phi \colon B_1 \to B_2$ between the bifurcation complexes $B_1$ and $B_2$ of $\mathcal F_1$ and $\mathcal F_2$ that preserves the action variables implies that
$\phi$ can be lifted to a foliation-preserving symplectomorphism $\Phi \colon M_1 \to M_2.$

It is known that in many cases, action variables form a complete set of symplectic invariants in the above sense (but not always;
see for example the classical work by Duistermaat \cite{Duistermaat1980}). The well
known such examples include 
the so-called \textit{toric systems} (Delzant's theorem \cite{Delzant1988}), simple Morse functions $H \colon M^2 \to \mathbb R$ on compact symplectic $2$-surfaces \cite{Vu-Ngoc2011, Dufour1994} and simple focus-focus singularities\footnote{This property of simple focus-focus singularities can be derived from \cite[Theorem~2.1 and Def.~3.1]{Vu-Ngoc2003} via the following fact, which can be derived from explicit formulae in \cite[Sec.~3]{Vu-Ngoc2003}: if two simple focus-focus singularities have the same action variables then they have the same Eliasson momentum map up to flat functions.}. We note that simple focus-focus singularities 
are symplectically classified in \cite{Vu-Ngoc2003}.
Moreover, there is also a general symplectic classification of semi-toric systems \cite{Pelayo2009, Pelayo2011}.

In connection to the above list of examples, \cite{Bolsinov2018_2} posed the problem of proving the `completeness' property of action variables for a larger class of (singularities of) integrable systems. For this problem, most progress has been made in the analytic category.
In the context of unfolded $A_n$ singularities (which are elliptic or hyperbolic singular points if $n=1$, parabolic singularities if $n=2$), we first of all mention  \cite{VarchenkoGivental1982} (see also \cite{ColindeVerdiere2003, Garay2004}), which implies that, in a neighbourhood of an unfolded $A_n$ singular point, no symplectic invariants exist (except for some discrete parameters) and the singular Lagrangian foliation can be brought to a (left-right) normal form by means of a real-analytic symplectomorphism; see Theorem~\ref{thm:An:loc}. In \cite{Bolsinov2018} it was shown that, for parabolic orbits and cuspidal tori, which are the simplest examples of degenerate singularities of two-degree of freedom systems, action variables are the only symplectic invariants in the real-analytic category.
One more case where such a situation occurs (in the real-analytic category) are non-degenerate semilocal singularities satisfying a connectedness condition \cite{KudryavtsevaOshemkov2021}, as has been established in \cite{KudryavtsevaOshemkov2022}.

In many cases, action variables generate an effective Hamiltonian $T^n$-action near a compact orbit.
Such examples include free $T^n$-actions near regular compact orbits (due to the Liouville--Arnol'd theorem), effective $T^n$-actions near elliptic compact orbits. 
In real-analytic case, for non-degenerate singular compact orbits (and only for them) there exists an effective Hamiltonian $T^n$-action generated by first integrals on a small open complexification of the orbit (``toric singularities'').
In all these examples, the $T^n$-action is symplectically rigid (i.e., persistent under integrable perturbations).

For degenerate singularities, the situation is more complicated, since one should take into account action variables that do not generate torus action. Many degenerate singularities still admit effective Hamiltonian $T^{n-1}$-actions near compact singular orbits 
(``semi-toric singularities'' \cite {KudryavtsevaLerman2024}); e.g.\ corank-1 singular orbits that lie in fibers $\mathcal L$ such that $\dim\mathcal L\le n$ \cite{Zung2000} (e.g.\ parabolic orbits with resonances), integrable Hamiltonian Hopf bifurcation; 
see (non-symplectic) normal forms of singular orbits of corank $2$ \cite {KudryavtsevaLerman2024} and corank $1$ \cite {AKKO2024} in integrable systems with $n\le 3$ degrees of freedom.
In real-analytic case, sufficient conditions are known for the existence of a locally-free $T^r$-action near a compact $r$-dimensional orbit \cite{Zung2006}, as well as for the existence of an effective torus-action near a singular point, and for persistence of such actions under integrable perturbations \cite {Kudryavtseva2020}. 
The author classified \cite {Kudryavtseva2020} Hamiltonian $T^k$-actions on small open complexifications of compact singular orbits.

All this leaves open the question of what are symplectic invariants in a neighbourhood of a degenerate singular point (resp., orbit, fiber) that classify such singularities up to symplectic equivalence, and whether rigidity holds for a local, semi-local or semi-global singularity (by rigidity, we mean that the mapping germ \eqref {eq:rigid} at a singular point or orbit is uniquely defined, i.e., does not depend on $\Phi$ and $\phi$).
Our purpose is to study these questions for singularities that are structurally stable (since they appear in many integrable systems).

The present work solves these questions in real-analytic case for an infinite series of corank-1 singularities having the type ``universal unfolding of $A_n$ singularity'' (or ``unfolded $A_n$ singularity'' for short), $n\ge1$.
As it turns out, a mapping germ $(\R^n,0)\to(\R^{n-1},0)$ satisfying some natural conditions classifies the singularity at the orbit (i.e., semi-local singularity) up to real-analytic symplectic equivalence. Similarly, a mapping germ $(\R^n,0)\to(\R^{n-1+\ell},0)$ 
classifies the singularity at the fiber (i.e., semi-global singularity) up to real-analytic symplectic equivalence, where $\ell$ is the number of connected components of $\mathcal L\setminus\mathcal O$, $\mathcal O$ is the orbit and $\mathcal L$ is the fiber containing $\mathcal O$ (in fact, there are also ``discrete'' topological invariants, namely: a local invariant $\eta=\pm1$ for odd $n$, and a semi-global invariant that is a permutation $\beta\in \Sym_\ell$).
This is the main contribution of the present work; see Theorems~\ref{thm:An:loc}, \ref{thm:An:semiloc}, \ref{thm:An:semiglob}, Corollary \ref {cor:An:semiloc:class},
which give a real-analytic classification of unfolded $A_n$ 
singular points, singular orbits and singular fibers up to real-analytic (left-right) symplectic equivalences.
We note that in the latter theorems the invariants are explicitly given by collections of functions in $n$ variables. We also establish properties of local action variables in the swallowtail domain (Propositions \ref {pro:inject}, \ref {pro:equivar}).
As a consequence, we establish the completeness of the action variables near the singular orbits (see Corollary \ref{cor:An:semiloc}).

As an application, consider magnetic geodesic flows on surfaces of revolution \cite {KobtsevKudryavtseva2025, KobtsevKudryavtseva2025a}.
Such a system is an integrable Hamiltonian system with 2 degrees of freedom. This system is given by a pair of functions (determining the metric and the magnetic field, respectively) in one variable, so this pair of functions surves as parameters of the system.
If this pair of functions satisfies some natural conditions of general position, then all singular points of the system are as follows: 
two center-center singular points, several 1-parameter families of saddle circles, several 1-parameter families of center circles, and several degenerate singular orbits having two types: parabolic orbits and ``asymmetric elliptic pitchforks'' \cite {KobtsevKudryavtseva2025}. 
In known mechanical systems, a pitchfork usually appears if the system is invariant under a canonical involution. However, it turns out that, in magnetic geodesic flows on surfaces of revolution, elliptic pitchforks appear that do not admit a system-preserving canonical involution near the singular orbit, provided that the system satisfies conditions of general position.
Using our symplectic normal form for unfolded $A_3$ singular points (Theorem \ref {thm:An:loc}), one can show that, for singular points of the type ``asymmetric elliptic pitchfork'', a local symplectic invariant is a function germ $\tilde J_1(\tilde H,\tilde J_2)$ at $(0,0)$ (which is identical $0$ if and only if the singularity is symmetric), while 
$\tilde J_1(0,0)\ne0$ for magnetic geodesic flows satisfying conditions of general position, which means that the pitchfork is indeed asymmetric. We plan to present these results in a future work.

\section {Parabolic singulariy ($n=2$)}

Our first Theorem~\ref{thm:An:loc} concerns singular points having the type ``universal unfolding of $A_n$ singularity''
(which are parabolic orbits if $n=2$, see e.g.\ \cite{Lerman1994, Efstathiou2012, Bolsinov2018} for a background material). 
Before stating this theorem, recall \cite{Bolsinov2018} the definition of a parabolic point 
of an integrable 2 degree of freedom Hamiltonian system.

Let $H$ and $F$ be a pair of Poisson commuting real-analytic functions on a real-analytic symplectic manifold $(M^4, \omega)$. They define a Hamiltonian $\R^2$-action (perhaps local) on $M^4$. The dimension of the $\R^2$-orbit through a point $P\in M^4$ coincides with the rank of the differential of the momentum map $\mathcal F=(H,F): M^4 \to \R^2$ at this point. We are interested in one-dimensional orbits and without loss of generality we assume that $\textup{d}F(P)\ne 0$.  Consider the restriction of $H$ onto the three-dimensional level set of $F$ through $P$, that is, $H_0:=H|_{\{F=F(P)\}}$. We assume that the rank of $\textup{d}\mathcal F$ at the point $P$ equals one. This is equivalent to any of the following:
\begin{itemize}
\item $P$ is a critical point of $H_0$;
\item there exists a unique $k\in \R$ such that $\textup{d}H(P) = k \textup{d}F(P)$, in particular, $P$ is a critical point of $F-kH$.
\end{itemize}

These properties hold true for each singular point $P$ of rank one of the momentum map $\mathcal F=(H,F)$ under the condition that $\textup{d}F(P)\ne 0$.

\begin{definition} [{\cite{Bolsinov2018}}] \label{def:parabolic}
A point $P$ (and the corresponding $\R^2$-orbit through this point) is called {\it parabolic} if the following conditions hold:
\begin{enumerate}
\item[(A1)] the quadratic differential $\textup{d}^2H^0(P)$ has rank 1; 
\item[(A2)] there exists a vector $v \in \ker \textup{d}^2H_0(P)$ such that $v^3H_0\ne 0$ (by $v^3H_0$ we mean the third derivative of $H_0$ along the tangent vector $v$ at $P$);
\item[(A3)] the quadratic differential $\textup{d}^2(H-kF)(P)$ has rank 3, where $k$ is the real number determined by the condition $\textup{d}H(P)=k\textup{d}F(P)$.
\end{enumerate}
\end{definition}

For example, consider the functions $H=p^2+q^3+Jq$ and $F=J$. They Poisson commute with respect to the canonical symplectic 2-form $\omega=\textup{d}J\wedge \textup{d}\varphi + \textup{d}p\wedge \textup{d}q$. The origin is a parabolic singular point, see Fig.~\ref {figure/all}.

\begin{figure}[htbp]
\begin{center}
\includegraphics[width=0.48\linewidth]{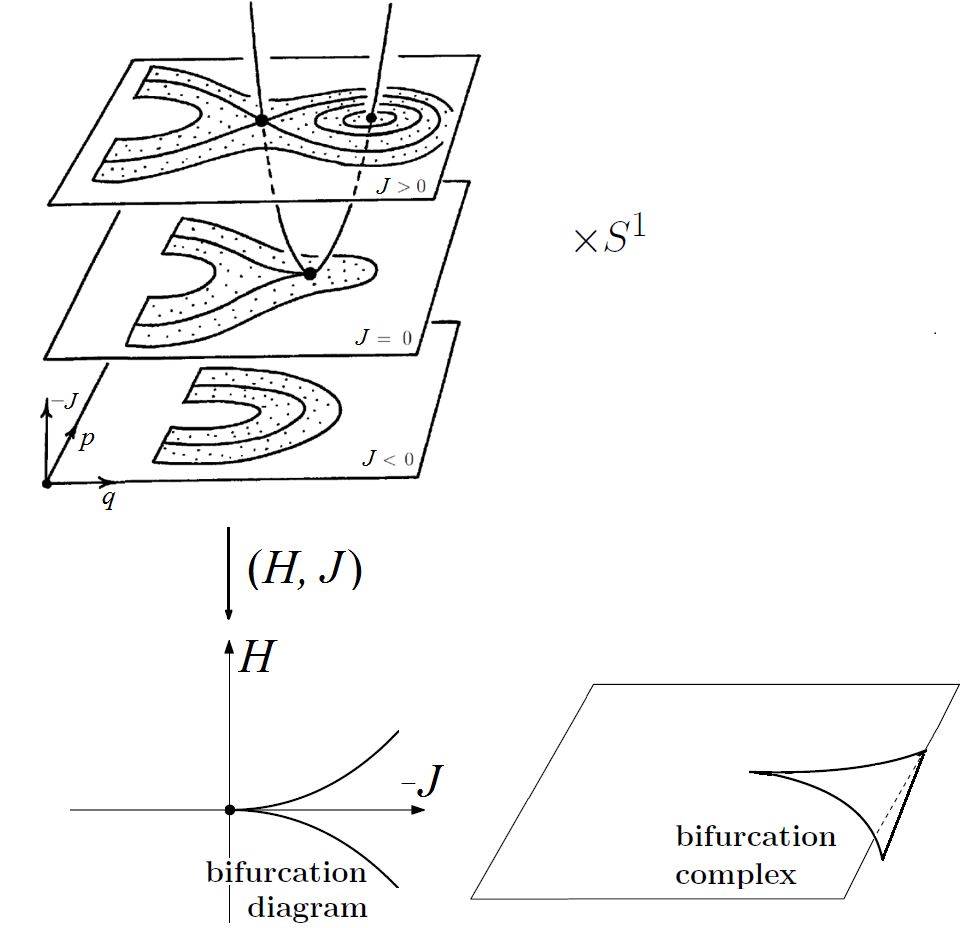} \qquad
\includegraphics[width=0.38\textwidth]{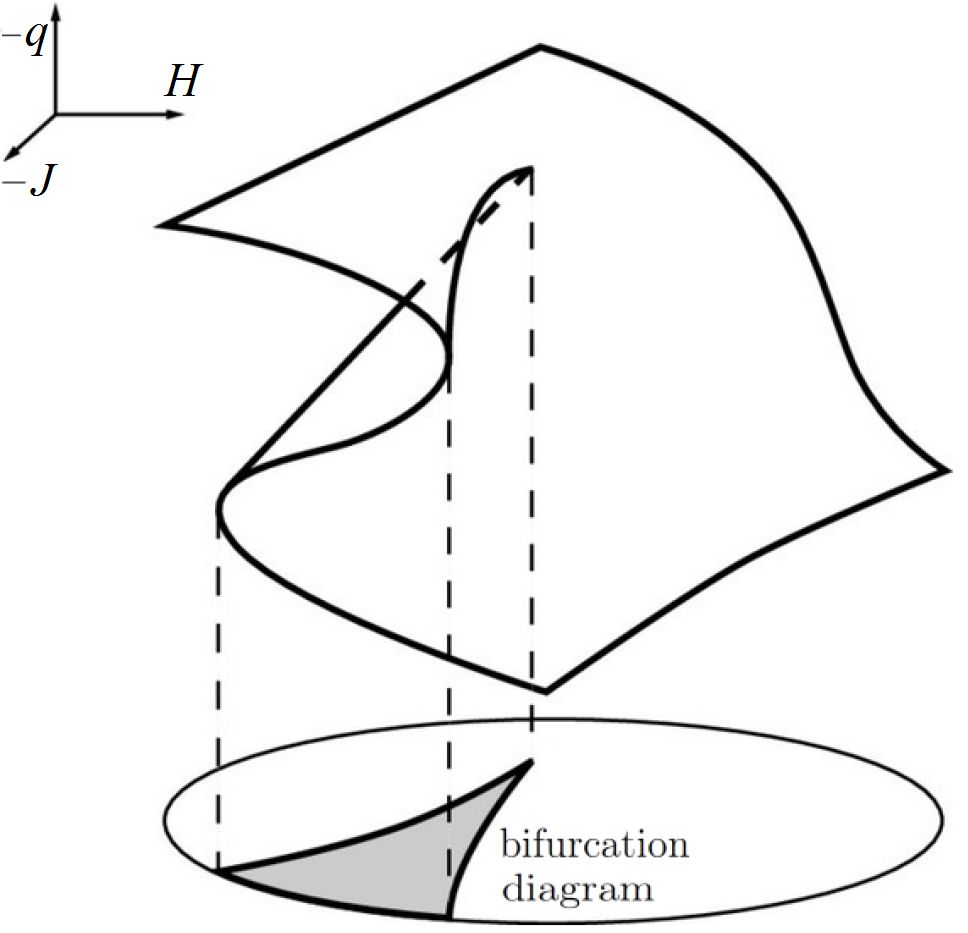} 
\end{center}
\caption{Parabolic singularity ($n=2$): the singular Lagrangian foliation (top left), the bifurcation diagram (bottom left), the bifurcation complex (bottom middle) of the energy-momentum map $(H,J)$, and the Whitney fold (right). The vanishing cycles are small loops around the `right' branch of the parabola in the top left figure. The cusp domain $D$ is a dashed area in the right figure, and the region in the bottom left figure to the right of the semicubical parabola (= the bifurcation diagram)}
\label{figure/all}
\end{figure}

\section {Universal unfolding of $A_n$ singularity} \label {sec:An}

In this section, we define a singularity of the type ``universal unfolding of $A_n$ singularity''
(or ``unfolded $A_n$ singularity'' for short)
for an integrable Hamiltonian system with $n$ degrees of freedom, $n\ge1$. 
For $n=1$, it is an elliptic or hyperbolic singularity. 
For $n=2$, it is a parabolic singularity.

Let $F_1,\dots,F_n$ (with $H=F_n$) be a collection of Poisson commuting real-analytic functions on a real-analytic symplectic manifold $(M^{2n}, \omega)$. They define a Hamiltonian $\R^n$-action (perhaps local) on $M^{2n}$. The dimension of the $\R^n$-orbit through a point $P\in M^{2n}$ coincides with the rank of  the differential of the momentum map $\mathcal F=(F_1,\dots,F_n): M^{2n} \to \R^n$ at this point. We are interested in $(n-1)$-dimensional orbits and without loss of generality we assume that $\textup{d}F_1(P),\dots,\textup{d}F_{n-1}(P)$ are linearly independent at a point $P\in M$.

Let us apply the procedure of a local symplectic reduction \cite{MarsdenWeinstein1974} by (local) Hamiltonian $\R^{n-1}$-action generated by the functions $F_1,\dots,F_{n-1}$. This procedure allows one to reduce the system to a Hamiltonian system that has one degree of freedom 
but depends on parameters. Let us describe this procedure.
Since the functions $F_1,\dots,F_{n-1}$ are in involution, the Hamiltonian vector fields $X_{F_i}$ corresponding to them commute and their flows generate a locally free action $G \times M \to M$ of the commutative group $G = \R^{n-1}$. Consider the submanifold $F_1 = F_1(P), \dots, F_{n-1} = F_{n-1}(P)$; in a neighborhood of $P$ it is a smooth $(n+1)$-dimensional disk $D^{n+1}$ containing the point $P$, and the restriction of the form $\omega$ to it has constant rank equal to $2$. It is smoothly fibered by local $(n-1)$-dimensional orbits, on each of which the form is identically equal to zero. 
Taking the quotient $D^{n+1}/G$, under which $(n-1)$-dimensional orbits are mapped to points, we obtain a symplectic $2$-dimensional local manifold with respect to the quotient 2-form $\widehat\omega$.
One can introduce local symplectic coordinates $(u,v)$ on it centred at the point $P$. Now one can transfer these coordinates to a full neighborhood $U$ of $P$ to obtain symplectic coordinates 
\begin{equation} \label {eq:coord:loc}
(F_1, \dots , F_{n-1}, \tau_1, \dots, \tau_{n-1}, u, v) \colon\, U \stackrel{\approx}{\longrightarrow} D^{n-1}\times T^{n-1} \times D^2,
\end{equation}
in which the 2-form $\omega$ reads
\begin{equation} \label {eq:Omega}
\omega = \sum_{j=1}^{n-1} \textup{d}F_j\wedge \textup{d}\tau_j + \textup{d}u\wedge \textup{d}v .
\end{equation}
The above procedure gives rise to a {\it reduced Hamiltonian system} with one degree of freedom with parameters $\varepsilon=(\varepsilon_1,\dots,\varepsilon_{n-1}) \in \R^{n-1}$ equal to the values of functions $F_1,\dots,F_{n-1}$. We will use the notation
$$
H^\varepsilon := H|_{F_1 = F_1(P)+\varepsilon_1, \dots, F_{n-1} = F_{n-1}(P)+\varepsilon_{n-1}}, \quad \varepsilon=(\varepsilon_1,\dots,\varepsilon_{n-1}) \in \R^{n-1},
$$
where $H$ is the reduced Hamiltonian.

We assume that the rank of $\textup{d}\mathcal F$ at the point $P$ equals $n-1$. This is equivalent to the following condition:
the linear part of the function $H^0$ at the point $P$ is zero, i.e., $\textup{d}H^0(P) = 0$. 
These properties hold true for each singular point $P$ of corank one of the momentum map $\mathcal F=(F_1,\dots,F_n)$ under the condition that $\textup{d}F_1(P),\dots,\textup{d}F_{n-1}(P)$ are linearly independent.

\begin{definition} \label{def:pleat}
A point $P$ (and the corresponding $\R^n$-orbit through this point) will be called  
a {\em singularity having the type of universal unfolding of $A_n$ singularity}, or just {\em unfolded $A_n$ singularity} ($n\ge1$),
if either $n=1$ and the quadratic differential $\textup{d}^2H^0(P)$ is non-degenerate, or $n\ge2$ and the following conditions hold:
\begin{enumerate}
\item[($A_n$1)] the quadratic function $\textup{d}^2H^0(P)$ has rank 1 (this implies that there exists a linear change of variables $(u,v)\to(u_1,v_1)$ such that $\textup{d}^2H^0(P) = \eta u_1^2$, $\eta=\pm1$, furthermore, by completing the square, one easily constructs a change of variables $(u_1,v_1)\to(x=u_1+v_1^2Q(v_1),y=v_1)$ for some polynomial $Q(v_1)$ of degree $\le\frac{n-3}2$ such that $\partial_{xy^{i+1}}^{(i+2)} H^0(P) = 0$ for all $i$ with $1\le i\le[\frac{n-1}2]$);
\item[($A_n$2)] the Taylor series coefficients of the function $H^0$ at $P$ satisfy the following condition:
$\partial_{y^3}^{'''} H^0(P) = \dots = \partial_{y^n}^{(n)} H^0(P) = 0$ and 
$\partial_{y^{n+1}}^{(n+1)} H^0(P) \ne 0$ (if $n=2$, then this condition means that $\partial_{y^3}^{'''} H^0(P) \ne 0$); 
\item[($A_n$3)] the square $(n-1)\times(n-1)$ matrix $\partial_{y^j\varepsilon_i}^{(j+1)} H^\varepsilon(P)$ at $\varepsilon=0\in\R^{n-1}$, $1\le i,j\le n-1$, is non-degenerate.
\end{enumerate}
Note that the local coordinates $(x,y)$ in assumptions ($A_n$2) and ($A_n$3) are those from assumption ($A_n$1).
Without loss of generality, we will assume that the value $\partial_{y^{n+1}}^{(n+1)} H^0(P)$, that is non-zero by assumption ($A_n$2), is positive (this can be achieved by replacing 
$\eta\to -\eta$ and $H\to -H$). 
If $n$ is even, we can and will assume that $\eta=+1$ (this can be achieved by replacing 
$H\to -H$ and $y\to -y$). 
If $n$ is odd, we will distinguish the {\em supercritical} ($\eta=+1$) and {\em subcritical} ($\eta=-1$) singularities and use the term {\it unfolded $A_n^\eta$ singularities}.
\end{definition}

It is an easy exercise that, for systems with $n=2$ degrees of freedom, our notion of an unfolded $A_2$ singularity (Definition \ref {def:pleat})
is equivalent to the notion of a parabolic point (Defition \ref {def:parabolic}).

It is known (see e.g.~\cite {KudLak2006}) that, under the assumptions of Definition~\ref{def:pleat}, there exists a local (non-canonical, generally speaking) change of variables 
\begin{equation} \label{eq:Phi1} \widehat\Phi = (\lambda,\mu,p,q) \colon\quad  
(\varepsilon,\tau,x,y) \ \mapsto \ (\lambda(\varepsilon),\mu(\varepsilon,\tau,x,y) ,p(\varepsilon,x,y),q(\varepsilon,x,y))
\end{equation}
near the point $P$ such that $P$ is the origin in the new variables $(\lambda,\mu,p,q)$ and
\begin{equation} \label {eq:Morse:type}
\widehat H \circ \widehat\Phi^{-1} = H_{can}^\lambda (p,q), \qquad 
H_{can}^\lambda (p,q) := \eta p^2 + q^{n+1} + \sum_{j=1}^{n-1} \lambda_j q^j , 
\qquad 
\frac{\partial}{\partial\mu_j}
= X_{\lambda_j}, \quad 1\le j\le n-1,
\end{equation}
where 
$\varepsilon=(\varepsilon_1,\dots,\varepsilon_{n-1})$, 
$\tau=(\tau_1,\dots,\tau_{n-1})$, 
$\lambda=(\lambda_1,\dots,\lambda_{n-1})$, 
$\mu=(\mu_1,\dots,\mu_{n-1})$, 
$\lambda(\varepsilon) = (\lambda_1(\varepsilon),\dots,\lambda_{n-1}(\varepsilon))$,
$\widehat H = H - \lambda_0(\varepsilon)$ for some smooth functions $\lambda_j(\varepsilon)$, $0\le j\le n-1$, where the change $\varepsilon\mapsto\lambda(\varepsilon)$ of small parameters is a local diffeomorphism, due to condition (A$_n$3) in Definition \ref {def:pleat}.
Moreover, if the system is real-analytic, then the above change of variables and the function $\lambda_0(\varepsilon)$ are real-analytic too. Consider the new first integrals 
$$
(\widehat H = H-\lambda_0(F_1,\dots,F_{n-1}), \lambda_1 (F_1,\dots,F_{n-1}),\dots,\lambda_{n-1} (F_1,\dots,F_{n-1}))
= \widehat\phi\circ\mathcal F
$$
obtained from $(H, F_1,\dots,F_{n-1})=\mathcal F$ by means of the diffeomorphism germ
\begin{equation} \label{eq:phi1}
\widehat\phi = (\widehat h,\lambda) \colon\, (\R^n,0) \to (\R^n,0), \qquad 
(h,\varepsilon) \ \mapsto \ (h-\lambda_0(\varepsilon),\lambda(\varepsilon)).
\end{equation}
We see that these changes bring the pair $(\omega,\mathcal F)$ to a preliminary normal form 
\begin{equation} \label {eq:Morse:type:}
(\widehat\Phi^{-1})^*\omega = \sum_{j=1}^{n-1} \textup{d} \lambda_j \wedge \textup{d}\mu_j + \textup{d}\alpha, \qquad 
\widehat\phi \circ \mathcal F \circ \widehat\Phi^{-1}
= \mathcal F_{can} := 
(H_{can}^\lambda(p,q), \lambda)
\end{equation}
where $\alpha$ is a 1-form in the $(n+1)$-dimensional space $(\lambda_1,\dots,\lambda_{n-1},p,q)$ such that $d\alpha|_{\lambda=0} = g(p,q)\textup{d}p\wedge \textup{d}q$, $g(0,0)>0$ (the latter inequality can be achieved by replacing $p\to-p$).

The standard Hamilton function $H_{can}^\lambda(p,q)$ in \eqref {eq:Morse:type} 
is known as a universal unfolding of $A_n$ singularity.
The standard momentum mapping $\mathcal F_{can}$ in \eqref {eq:Morse:type:}
corresponds to the {\it ``generalized Whitney mapping''}
$(\lambda_1,\dots,\lambda_{n-1},q) \mapsto 
(\lambda_1,\dots,\lambda_{n-1}, q^{n+1} + \sum_{j=1}^{n-1} \lambda_j q^j)$ 
\cite[Sec.~2.5]{Arnold2012}. These mappings with $n=2,3$ yield two famous Whitney singularities: the fold catastrophe ($n=2$, Fig.~\ref {figure/all}, right) and the dual cusp catastrophe ($n=3$, Fig.~\ref {fig:swallow}).

\begin{figure}[htbp]
\begin{center}
\includegraphics[width=0.32\linewidth]{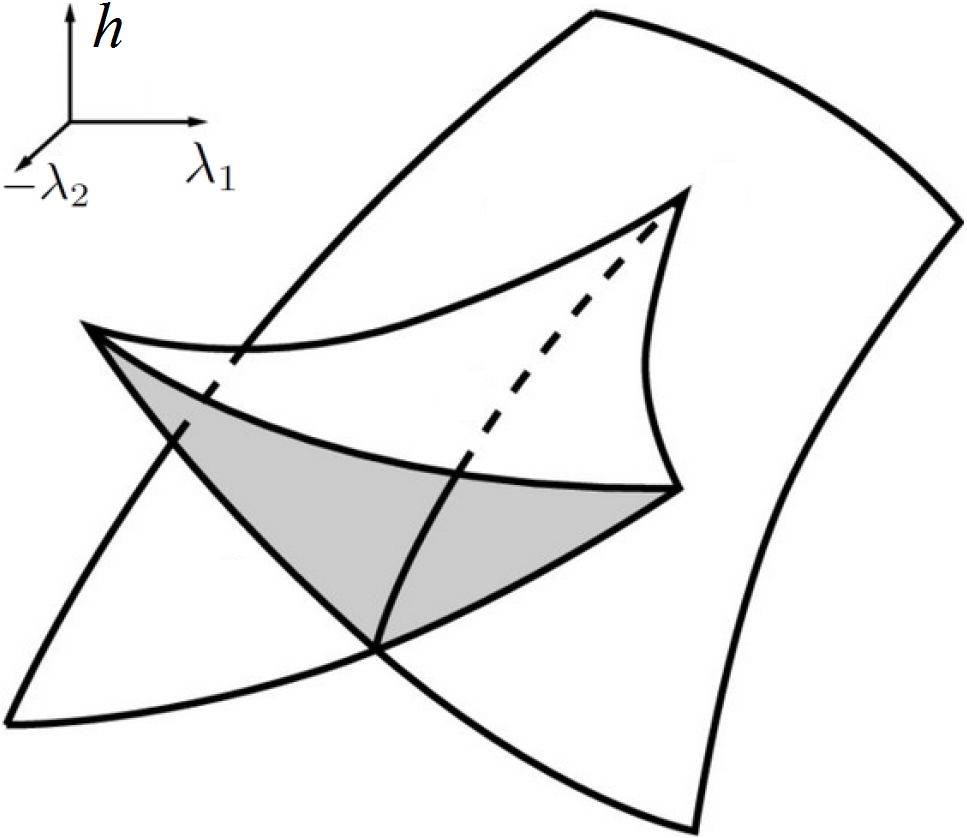}
\end{center}
\caption{The case $n=3$: the swallowtail surface $\Sigma$ (= the bifurcation diagram) and the swallowtail domain $D$ (dashed)}
\label{fig:swallow}
\end{figure}

\section {Local symplectic normal form} \label {sec:loc}

Our main result of this section studies local singularities (i.e., in a small neighbourhood of a singular point) of the Liouville foliation. It states that two local unfolded $A_n$ singularities are symplectically equivalent if and only if they are smoothly equivalent.

Having the preliminary normal form \eqref {eq:Morse:type:} in mind, we can state the following result (see Section~\ref{app:loc} for more detail and proof).

\begin{theorem} [Symplectic normal form for unfolded $A_n$ singular points] \label{thm:An:loc}
Let $P\in M$ be an unfolded $A_n$ singular point of a real-analytic integrable system $\mathcal F \colon M \to \mathbb R^n$ with $n$ degrees of freedom, $n\ge1$. Then there exist real-analytic 
local coordinates 
$\Phi=(\tilde\lambda,\tilde\mu,\tilde p,\tilde q)$ centered at $P$, in which the functions 
\begin{equation} \label {eq:loc}
\tilde H = \eta \tilde p^2 + \tilde q^{n+1} + \sum_{j=1}^{n-1} \tilde \lambda_j \tilde q^j \quad \mbox{ and } \quad \tilde{J}_j = \tilde \lambda_j, \qquad 1\le j\le n-1, 
\qquad \eta=\pm1 
\end{equation}
($\eta=+1$ if $n$ is even) 
are real-analytic functions of $(F_1,\dots,F_n)$, i.e., $(\tilde H,\tilde J_1,\dots,\tilde J_{n-1}) = \phi\circ \mathcal F$ for some real-analytic local coordinates $\phi=(\widetilde h,\tilde\lambda)$ centered at $\mathcal F(P)$, and the symplectic structure has canonical form
\begin{equation*} 
\omega = \sum_{j=1}^{n-1} \textup{d} \tilde\lambda_j \wedge \textup{d} \tilde\mu_j + 
\textup{d}\tilde p \wedge \textup{d}\tilde q .
\end{equation*}
The tuple of functions $(\tilde H,\tilde J_1,\dots,\tilde J_{n-1}) = \phi\circ\mathcal F$ is rigid in the following sense: it is uniquely defined 
when $n$ is even or $n=\eta=1$; it is uniquely defined
up to simultaneously changing the sign of the functions $\tilde J_{2i-1}$ ($i=1,\dots,m$) when $n=2m+1\ge3$ is odd, or up to changing the sign of $\tilde H$ when $n=1$ and $\eta=-1$. 
\end{theorem}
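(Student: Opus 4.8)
The plan is to start from the preliminary normal form \eqref{eq:Morse:type:} and then kill the remaining freedom in the $1$-form $\alpha$ by a symplectomorphism that respects the fibration $\mathcal F_{can}$. So the first step is to observe that after the reductions of Section~\ref{sec:An} we may assume $\omega = \sum_j \textup{d}\lambda_j\wedge\textup{d}\mu_j + \textup{d}\alpha$ with $\mathcal F = \mathcal F_{can} = (H_{can}^\lambda(p,q),\lambda)$, where $\alpha$ lives on the $(n{+}1)$-dimensional space with coordinates $(\lambda,p,q)$ and $\textup{d}\alpha|_{\lambda=0}=g(p,q)\,\textup{d}p\wedge\textup{d}q$ with $g(0,0)>0$. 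The $\mu_j$ variables are cyclic, so the whole problem lives in the parameter-dependent one-degree-of-freedom reduced system: I want a fiber-preserving symplectomorphism of the $(\lambda,p,q)$-space that sends $\textup{d}\alpha$ to $\textup{d}p\wedge\textup{d}q$ while preserving $H_{can}^\lambda(p,q)$ and $\lambda$, after possibly post-composing $\mathcal F$ with a further diffeomorphism germ $(\R^n,0)\to(\R^n,0)$ that only rescales the $H$-coordinate (which is harmless for the statement, since it gets absorbed into $\phi$). This is exactly a parametrized version of the classical Darboux/Moser statement that two area forms on a surface with the same total mass are related by a diffeomorphism; here we need that diffeomorphism to additionally preserve the level sets of $H_{can}^\lambda$.

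The key step is therefore a relative Moser-type argument for the family $\omega_t = (1-t)\,g\,\textup{d}p\wedge\textup{d}q + t\,\textup{d}\alpha$ (plus the $\textup{d}\lambda\wedge\textup{d}\mu$ part), solving $\dot\Phi_t = X_t\circ\Phi_t$ with $\iota_{X_t}\omega_t = -\beta$ where $\textup{d}\beta$ is the variation of the symplectic form, and choosing $\beta$ so that $X_t$ is tangent to the fibers $\{H_{can}^\lambda = \const,\ \lambda=\const\}$. Tangency to the $\lambda$-levels is automatic once $X_t$ has no $\partial_{\lambda_j},\partial_{\mu_j}$ components; tangency to the $H$-levels forces $\beta(X_t)$-type conditions, i.e.\ one must choose the primitive $\beta$ of $\textup{d}\alpha - g\,\textup{d}p\wedge\textup{d}q$ to be a function times $\textup{d}H_{can}^\lambda$ up to exact terms. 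Concretely I would use the existence of local action variables (Mineur--Arnol'd) in the reduced system away from the critical locus, plus real-analytic dependence on $\lambda$, to normalize the area form fiberwise: for each value $(\lambda, h)$ near $0$, the reduced level curve encloses a definite area, and matching this area function by a change of the $H$-coordinate $h\mapsto \tilde h(\lambda,h)$ (a real-analytic diffeomorphism in $h$ for each $\lambda$, by condition $g(0,0)>0$ and the nondegeneracy built into the $A_n$ unfolding) is what lets the Moser flow close up. The real-analyticity of everything in sight follows because the objects are real-analytic and the cohomological equations solved have real-analytic solutions (one can invoke the analytic versions already used in the literature cited for the preliminary normal form, e.g.\ \cite{KudLak2006, VarchenkoGivental1982}).

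The main obstacle I expect is the \emph{uniqueness/rigidity} clause rather than existence. Once the normal form \eqref{eq:loc} is achieved, one must show that the germ $\phi\circ\mathcal F = (\tilde H,\tilde J_1,\dots,\tilde J_{n-1})$ is independent of all choices up to the stated finite ambiguity. The argument here should be: any two such normal forms are related by a symplectomorphism $\Psi$ and a diffeomorphism $\psi$ of the base with $\psi\circ(\tilde H,\tilde J) = (\tilde H',\tilde J')\circ\Psi$, and one analyzes which $\psi$ can arise. Because $\tilde J_1,\dots,\tilde J_{n-1}$ together with $\tilde H$ are intrinsically pinned down — $\tilde J_j$ are (up to the torus action) the reduced-system parameters, and the polynomial $\tilde q^{n+1}+\sum \tilde\lambda_j \tilde q^j$ is the versal $A_n$-unfolding whose coefficients are coordinate-free data of the critical value set (the swallowtail/generalized Whitney discriminant) — the only symmetries of the picture are the symmetries of the $A_n$ singularity $\tilde q^{n+1}$ together with $\eta\tilde p^2$. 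For $n$ even, $\tilde q\mapsto -\tilde q$ changes the sign of the odd-degree $\tilde\lambda_j$ and of $\tilde q^{n+1}$, which is not a symmetry (it changes $\tilde H$), so $\phi\circ\mathcal F$ is unique; and $\tilde p\mapsto -\tilde p$ is symplectically not allowed by itself but paired with another sign change — one checks these all act trivially. For $n=2m{+}1$ odd, $\tilde q\mapsto-\tilde q$ is a genuine symmetry of $\tilde q^{n+1}$, it sends $\tilde\lambda_{2i-1}\mapsto -\tilde\lambda_{2i-1}$ and fixes the even-indexed ones and $\tilde H$, so it produces exactly the claimed sign ambiguity in $\tilde J_{2i-1}$; and for $n=1$ the map $\eta\tilde p^2 + \tilde q^2 = \tilde H$ with $\eta=-1$ admits $\tilde H\mapsto -\tilde H$ via swapping $\tilde p\leftrightarrow\tilde q$. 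The delicate part is ensuring no \emph{extra} base diffeomorphism $\psi$ (not coming from such a linear symmetry) is possible — this I would handle by noting that $\psi$ must preserve the discriminant hypersurface $\Sigma$ and its stratification, and that the real-analytic structure of the generalized Whitney map rigidifies $\psi$ to be affine on the base, after which only the finite symmetry group of the $A_n$ form remains. Pinning that last step down carefully (that a foliation-preserving symplectomorphism cannot induce, say, a non-affine reparametrization of the $\lambda_j$) is where the real work lies, and it will lean on the injectivity/equivariance properties of the local action variables announced as Propositions~\ref{pro:inject} and~\ref{pro:equivar}.
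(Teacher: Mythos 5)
Your existence argument has a genuine gap that makes it fail for $n\ge 2$. You propose to keep $\lambda$ fixed and realize the normalization by a fiberwise Moser flow, absorbing the only remaining freedom into a reparametrization $h\mapsto\tilde h(\lambda,h)$ of the single $H$-coordinate. But the complete invariant of the pair $(\textup{d}\alpha, H^\lambda_{can})$ under fiber-preserving symplectomorphism is the full period mapping $(I_1,\dots,I_n)$ over the $n$ vanishing cycles of the complexified level curves, and for $(h,\lambda)$ in the swallowtail domain already $[\frac{n+\eta}2]$ of these are areas of distinct real ovals that any real fiber-preserving symplectomorphism must preserve separately (two of them for $n=3$, $\eta=+1$, for instance). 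One scalar function $\tilde h(\lambda,h)$ cannot match $n$ independent period functions; the base diffeomorphism $\phi$ must move all $n$ coordinates $(h,\lambda)$, and the statement that the map $\phi = \tilde I^{-1}\circ I$ so defined off the discriminant extends real-analytically across the discriminant and the origin is precisely the hard content. The paper obtains it from the Varchenko--Givental stability theorem for infinitesimally non-degenerate period mappings, applied to the path $\alpha_t=(1-t)\alpha+t\,p\,\textup{d}q$; verifying infinitesimal non-degeneracy is a concrete computation (the limit of $\det\frac{\partial(I_1,\dots,I_n)}{\partial(h,\lambda)}$ as $h\to0$ equals a non-zero multiple of a known non-vanishing determinant $D$) that has no counterpart in your proposal. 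Your Moser flow, even if set up, would still face the analyticity-at-the-singular-locus issue that this theorem is invoked to resolve; a bare reference to Darboux/Moser does not settle it.

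On rigidity your sketch is closer to the mark: you correctly identify the finite symmetry group ($\tilde q\mapsto-\tilde q$, $\tilde p\mapsto-\tilde p$ for odd $n$ giving the sign flips of $\tilde\lambda_{2i-1}$, and the $n=1$, $\eta=-1$ swap), and you correctly flag that excluding a nontrivial base diffeomorphism is the real work. But the intermediate claim you aim for (that preservation of the discriminant stratification forces $\psi$ to be affine) is not how the exclusion actually goes and is not obviously true as stated. The paper's mechanism is different: a foliation-preserving symplectomorphism permutes the vanishing cycles, the permutation is either the identity or the order-reversal (by reality of cycles and their mod-$2$ intersection matrix), it preserves the corresponding action variables, and then the injectivity of the period mapping on the swallowtail domain (Proposition~\ref{pro:inject}, proved by a degree/index argument) forces $\phi=\mathrm{id}$ or $\phi=\psi$. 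You would need to supply that injectivity argument, or an affine-rigidity argument of your own, to close this step.
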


We will give a proof of Theorem \ref{thm:An:loc} in Sec.~\ref {app:loc}. Its first part 
follows from \cite{VarchenkoGivental1982}, see also \cite{ColindeVerdiere2003, Garay2004} (see also \cite{KudryavtsevaMartynchuk2023} for the case $n=2$ of parabolic points). 

Thus, by the first part of Theorem \ref {thm:An:loc}, near an unfolded $A_n$ singular point,
the pair $(\omega,\mathcal F)$ has the standard form 
\begin{equation} \label {eq:loc:}
(\Phi^{-1})^*\omega = \sum_{j=1}^{n-1} \textup{d}\tilde \lambda_j \wedge \textup{d}\tilde \mu_j + \textup{d}\tilde p\wedge \textup{d}\tilde q, \quad 
\phi\circ\mathcal F\circ\Phi^{-1} 
= \mathcal F_{can} 
:= (\eta \tilde p^2 + \tilde q^{n+1} + \sum_{j=1}^{n-1} \tilde \lambda_j \tilde q^j,\tilde \lambda_1,\dots,\tilde \lambda_{n-1}), 
\end{equation}
$\eta=\pm1$. In other words, all unfolded $A_n$ singularities of $n$-degree of freedom integrable systems are locally (i.e., in a neighborhood of the singular point) symplectically equivalent (for a given $\eta=\pm1$).
For $n=1$, these are $A_1$ singularities of $1$-degree of freedom Hamiltonians, i.e., non-degenerate singularities of elliptic ($A_1^+$) and hyperbolic ($A_1^-$) types. Their reducibility to local symplectic normal form was proved by Vey \cite{Vey1978} (see also \cite {DeVerdiere1979}), and it was extended to higher-dimensional non-degenerate rank-$0$ singularities which are direct products of elliptic and/or hyperbolic singularities \cite{Vey1978}, as well as to higher-dimensional Morse lemma with volume-preserving coordinate changes \cite{DeVerdiere1979}. 

The second part of Theorem \ref {thm:An:loc} is the rigidity property which means the following:
for any pair of coordinate changes $\widetilde\Phi,\widetilde\phi$ satisfying an analog of \eqref {eq:loc:},
\begin{itemize}
\item if $n$ is even or $n=\eta=1$, then $\widetilde\phi = \phi$;
\item if $n=2m+1$ is odd and $n\ge3$, then $\widetilde\phi = \phi$ or $\widetilde\phi = \psi\circ\phi$ where 
$\psi(\widetilde h,\tilde \lambda_1,\dots,\tilde \lambda_{n-1}) = (\widetilde h,-\tilde \lambda_1,\tilde \lambda_2,-\tilde \lambda_3,\dots,-\tilde \lambda_{n-2},\tilde \lambda_{n-1})$;
\item if $n=1$ and $\eta=-1$, then $\widetilde\phi = \phi$ or $\widetilde\phi = \psi_1\circ\phi$ where $\psi_1(\widetilde h)=-\widetilde h$.
\end{itemize}

Note that with the notation of \eqref {eq:Phi1}--\eqref {eq:Morse:type:}, the (local) bifurcation diagram of the energy-momentum map 
$\mathcal F = (H,F_1,\dots,F_{n-1})$ with $n\ge2$ is the swallowtail surface
$$
\widehat\phi^{-1}( \{(\widehat h, \lambda_1,\dots,\lambda_{n-1}) \in \mathbb R^n \mid
\mbox{the polynomial } q^{n+1} + \sum_{j=1}^{n-1} \lambda_j q^j - \widehat h \mbox{ has a multiple real root} \}).
$$
The region 
\begin{equation} \label {eq:D}
D := 
\widehat\phi^{-1}(
\{ (\widehat h,\lambda_1,\dots,\lambda_{n-1})\in\R^n \mid \exists q_1 < q_2 < \dots < q_{n+1} \ \forall q\in\R\ 
\prod_{i=1}^{n+1}(q-q_i) = q^{n+1} + \sum_{j=1}^{n-1} \lambda_j q^j - \widehat h 
\})
\end{equation}
will be called the \emph{swallowtail domain} of the singular point $P$ (see Fig.~\ref {fig:swallow} for $n=3$), or the {\it cusp domain} if $n=2$ (Fig.~\ref{figure/all}). 
The closed curves (lying in a small complexification of the singular point $P$)
\begin{equation} \label {eq:vanish}
\gamma_{h,\varepsilon,k} := 
\{
H = h,\ F_j=\varepsilon_j,\ \tau_j=0,\ 1\le j\le n-1,\
q_{k}(h,\varepsilon) \le q \le q_{k+1}(h,\varepsilon) \}, \quad 1\le k\le n,
\end{equation}
$(h,\varepsilon) = (h,\varepsilon_1,\dots,\varepsilon_{n-1}) \in D$, will be called 
\emph{vanishing cycles}, where $q_1(h,\varepsilon)<\dots<q_{n+1}(h,\varepsilon)$ denote the roots of the polynomial $q^{n+1} + \sum_{j=0}^{n-1} \lambda_j(\varepsilon) q^j - h$. 
For each $(h,\varepsilon)\in D$, the vanishing cycle \eqref {eq:vanish} 
is given by the equation $p = \pm \sqrt{\prod_{i=1}^{n+1}(q-q_i(h,\varepsilon))}$, $q_{k}(h,\varepsilon) \le q \le q_{k+1}(h,\varepsilon)$, and 
lies either in the ``real'' plane or in the ``imaginary'' plane:
\begin{equation} \label {eq:real}
\begin{gathered}
(p,q) \in \R\times\R
 \quad \mbox{ if $(-1)^{n+k}=\eta$ (``real'' plane)}, \\
(p,q) \in i\R\times\R
 \quad \mbox{ if $(-1)^{n+k}=-\eta$ (``imaginary'' plane)}.
\end{gathered}
\end{equation}
We have $n$ vanishing cycles for each $(h,\varepsilon)\in D$, and $[\frac{n+\eta}2]$ of them are ``real'' and the other
$\left\lceil \frac{n-\eta}2 \right\rceil$ are ``imaginary''.

On the swallowtail domain $D$ (see \eqref {eq:D}), consider the local action variables $I_k(h,\varepsilon)$, $1\le k\le n$ (see \eqref {eq:action}), corresponding to the family of vanishing cycles $\gamma_{h,\varepsilon,k}$, such that $I_k(h,\varepsilon)\to0$ as $(h,\varepsilon)\to \mathcal F(\mathcal O_P)$, and
\begin{equation}\label {eq:I}
\begin{gathered} 
I_k(h,\varepsilon)>0 \quad \mbox{if } (-1)^{n+k}=\eta \mbox{ (i.e., the vanishing cycle $\gamma_{h,\varepsilon,k}$ is ``real'')}, \\
-iI_k(h,\varepsilon)>0 \quad \mbox{if } (-1)^{n+k}=-\eta \mbox{ (i.e., the vanishing cycle $\gamma_{h,\varepsilon,k}$ is ``imaginary'')}
\end{gathered}
\end{equation}
(here the indicated signs of the action variables are achieved by choosing appropriate orientations of the vanishing cycles). Thus, we have the {\em local action mapping} (known as the {\em period mapping} for the 1-form $\alpha|_{\varepsilon=\const}$) on the swallowtail domain $D$:
\begin{equation} \label {eq:I:}
\begin{gathered}
I = (I_1,\dots,I_n)\colon\, D \ \to \ \Delta \approx \R^n_{>0}, \\
\Delta := \left \{ \begin{array} {ll}
\R_{>0}\times i\R_{>0} \times \R_{>0} \times i\R_{>0}\times\dots\times\R_{>0} & \mbox{if $n$ is odd, $\eta=+1$}, \\ 
i\R_{>0}\times \R_{>0} \times i\R_{>0} \times \R_{>0}\times\dots\times i\R_{>0} & \mbox{if $n$ is odd, $\eta=-1$}, \\ 
i\R_{>0}\times \R_{>0} \times i\R_{>0} \times \R_{>0}\times\dots\times \R_{>0} & \mbox{if $n$ is even},
\end{array} \right.
\end{gathered}
\end{equation}
here $d\alpha|_{\varepsilon=\const} = g(p,q,\varepsilon)dp\wedge dq$, $g(0,0,0)\ne0$, and we assume without loss of generality that $g(0,0,0)>0$.

\begin{remark} 
Let us describe the swallowtail domain \eqref {eq:D} using the elementary symmetric polynomials
$$
\sigma_k(q_1,\dots,q_{n+1})
= \sum_{1\le j_1<\dots<j_k\le n+1} q_{j_1}\dots q_{j_k}, \quad 1\le k \le n+1.
$$
Note that the swallowtail domain $\widehat\phi(D)$ admits the parametrization
$$
\vec{q} \mapsto (\widehat h,\lambda_1,\dots,\lambda_{n-1}) 
= (-\sigma_{n+1}(\vec q),\sigma_n(\vec q),\dots,\sigma_2(\vec q)),\ 
\vec{q} = (q_1,\dots,q_{n+1}) \in\R^{n+1},\ 
q_1<\dots<q_{n+1},\ 
\sigma_1(\vec q) = 0.
$$ 
We easily obtain (using the relations $\sigma_1(\vec q)=0$ and $\lambda_{n-1}=\sigma_2(\vec q) = \frac12(\sigma_1(\vec q)^2 - \sum_{j=1}^{n+1}q_j^2) = -\frac12|\vec q|^2$) the following estimates on the intersection of the swallowtail domain $\widehat\phi(D)$ 
with a small neighborhood of the origin for $n\ge2$:
\begin{equation} \label {eq:swallow}
\begin{aligned}  
\lambda_{n-1} < 0, \qquad  
\lambda_{k} &= O((-\lambda_{n-1})^{(n+1-k)/2}), \quad 1\le k \le n-2, \\ 
\widehat h  &= O((-\lambda_{n-1})^{(n+1)/2}),
\end{aligned}
\end{equation}
therefore this intersection lies in the half-space $\{\lambda_{n-1}<0\}$ and is close to the ray $\{\widehat h=\lambda_1=\dots=\lambda_{n-2}=0,\ \lambda_{n-1}<0\}$, see Fig.~\ref {figure/all} and~\ref {fig:swallow} for $n=2,3$ (for $n=1$, the swallowtail domain is a positive ray $\{\widehat h>0\}$).
\end{remark}

Having the above definitions in mind, we can state the following properties of the mapping \eqref {eq:I:}.

\begin{proposition} [Injectivity of the local action mapping for unfolded $A_n$ singular point] \label{pro:inject}
Consider a real-analytic integrable system $(M,\omega,\mathcal F)$ having unfolded $A_n^\eta$ singularity at a point $P$ ($n\ge1$, $\eta=\pm1$). Consider the local action variables $I_1,\dots,I_n$ on the swallowtail domain $D$ corresponding to the families of vanishing cycles $\gamma_{h,\varepsilon,k}$, $1\le k\le n$, $(h,\varepsilon)\in D$.
Then the local action mapping $I= (I_1,\dots,I_n) \colon \, D\to \Delta \approx \R^n_{>0}$ (see \eqref {eq:I:})
is injective on the intersection of $D$ with a small neighborhood $V\subset\R^n$ of the point $\mathcal F(P)$. 
Moreover, $I|_{D\cap V}$ is a homeomorphism onto the intersection of the positive octant $\Delta$ with a small neighborhood of the origin.
\end{proposition}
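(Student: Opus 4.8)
The plan is to reduce everything, via the preliminary normal form \eqref{eq:loc:} of Theorem~\ref{thm:An:loc}, to the study of the standard period mapping for the standard Hamiltonian $H_{can}^\lambda(p,q) = \eta p^2 + q^{n+1} + \sum_j \lambda_j q^j$ together with the (non-canonically reduced) $1$-form $\alpha$ with $d\alpha|_{\varepsilon=\mathrm{const}} = g(p,q,\varepsilon)\,dp\wedge dq$, $g(0,0,0)>0$. Since the change of variables $\widehat\Phi$ is fixed and real-analytic, the action variables $I_k$ are real-analytic functions on the (open) swallowtail domain $D$ extending continuously by $0$ to $\mathcal F(\mathcal O_P)$, so the whole question is about the period integrals $I_k(h,\varepsilon) = \frac{1}{2\pi}\int_{\gamma_{h,\varepsilon,k}} \alpha = \frac{1}{2\pi}\int \pm g\cdot (\text{area-type integral})$ over the segment $q_k \le q \le q_{k+1}$ of the curve $p = \pm\sqrt{\prod_i (q-q_i)}$. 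It is convenient to first handle the model case $g\equiv 1$, where $I_k$ is (up to the sign/$i$ conventions of \eqref{eq:I}) the genuine area enclosed by the vanishing cycle, and then treat general $g$ as a perturbation that does not destroy the key monotonicity.

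First I would set up good coordinates on $D$. By the Remark, $D$ is parametrized near the origin by the ordered roots $q_1 < \dots < q_{n+1}$ subject to $\sigma_1(\vec q)=0$; equivalently, by the $n$ consecutive gaps $w_k := q_{k+1} - q_k > 0$, $k=1,\dots,n$, which give a real-analytic diffeomorphism from $\{w\in\R^n_{>0}\}\cap(\text{nbhd of }0)$ onto $D\cap V$ (the constraint $\sigma_1=0$ pins down the overall translation). So it suffices to prove that the composed map $w \mapsto I(h(\vec q(w)),\varepsilon(\vec q(w)))$ is injective onto a neighborhood of $0$ in the octant $\Delta$. The central computation is that each period integral $I_k$, in the model case $g\equiv 1$, is (a nonzero constant multiple of) $\int_{q_k}^{q_{k+1}} \sqrt{\bigl|\prod_{i=1}^{n+1}(q-q_i)\bigr|}\,dq$, and one checks:

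Step 1 (positivity and vanishing): each $I_k > 0$ (with the orientation convention of \eqref{eq:I}, absorbing the $i$'s), and $I_k \to 0$ exactly when $w_k \to 0$, i.e. when the two bounding roots collide; this gives the boundary behavior and the target octant. Step 2 (the Jacobian / monotonicity): compute $\partial I_k/\partial w_j$. Differentiating under the integral sign, $\partial I_k/\partial q_j$ is again an integral of $\prod_i|q-q_i|^{\,\pm 1/2}$ over $[q_k,q_{k+1}]$, with a definite sign depending on whether $q_j$ lies to the left of, inside, or to the right of the interval; the upshot (this is the classical fact that area of a vanishing cycle is strictly monotone in the "width" in its own direction and monotone of the opposite sign in distances to the other roots) is that the matrix $(\partial I_k/\partial w_j)$ is, after an obvious sign normalization, a strictly diagonally dominant / totally-positive-type matrix, hence nonsingular on $D\cap V$; in particular $I$ is a local diffeomorphism. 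Step 3 (global injectivity): promote local injectivity to global injectivity on $D\cap V$ by a degree/monotone-map argument — e.g. show $I$ extends to a continuous map on the closed corner $\overline{D\cap V} \cong \overline{\R^n_{\ge 0}}\cap(\text{nbhd})$ sending the face $\{w_k = 0\}$ into the face $\{I_k = 0\}$ homeomorphically by induction on $n$, so that $I$ is a proper local homeomorphism of a corner to a corner that is a homeomorphism on the boundary; invariance of domain then forces it to be a homeomorphism onto a neighborhood of $0$ in $\Delta$. Step 4 (general $g$): write $I_k = \frac{1}{2\pi}\int \pm g\, dp\wedge dq$ over the $2$-chain bounded by $\gamma_{h,\varepsilon,k}$; since $g(0,0,0)>0$, on a small enough complex neighborhood $g$ is bounded between two positive constants, so Step 1 survives verbatim, and for Step 2 one notes that $\partial I_k/\partial w_j$ picks up $g$ as a positive weight inside the same integrals, which preserves all the sign patterns used in the diagonal-dominance argument (the weight is a single smooth positive function, uniform over the small domain, so it does not change which entries are positive/negative nor the dominance inequality after shrinking $V$). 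Then Step 3 applies unchanged.

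The main obstacle is Step 2: making the sign/monotonicity analysis of the derivatives $\partial I_k/\partial q_j$ precise and uniform for all $n$, and in particular nailing down that the resulting $n\times n$ matrix is nonsingular on all of $D\cap V$ (not just generically) — this requires care with the improper integrals near the endpoints $q_k, q_{k+1}$ where the integrand has square-root singularities, and with keeping the diagonal-dominance estimates uniform as $w\to 0$. The natural tools are differentiation under the integral sign after the substitution $q = \tfrac{q_k+q_{k+1}}{2} + \tfrac{w_k}{2}t$, $t\in[-1,1]$, which turns $I_k$ into $w_k^{\,1 + 1/2\cdot\#(\text{roots})}$ times a smooth positive function of the remaining data, together with the classical Picard–Fuchs / Wronskian identities for these elliptic-type period integrals (which also re-prove the nondegeneracy of the period Jacobian); alternatively one can invoke the general results on non-vanishing of period Jacobians for $A_n$ unfoldings in \cite{VarchenkoGivental1982}. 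Once Step 2 is in hand, Steps 1, 3, 4 are routine.
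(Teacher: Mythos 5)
Your overall architecture matches the paper's: the paper also first establishes that the period mapping $P=(I_1,\dots,I_n)|_{D_r}$ is an immersion and then promotes local to global injectivity by a degree argument on the corner (see the footnote in Step~6 of Section~\ref{app:loc}: the extension $P_1$ to $\overline{D_r}$ sends each facet $\partial_kD_r$ of the bifurcation diagram into the facet $\partial_k\Delta$ of the octant, the index of any $t_0$ in a small ``tetrahedral'' domain $\Delta_\delta$ relative to the spheroid $P_1|_{\partial D_r}$ equals $1$, and all multiplicities equal $\mathrm{sgn}\det\frac{\partial(I_1,\dots,I_n)}{\partial(h,\lambda)}=1$). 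So your Step~3 is essentially the paper's argument, and your Step~4 is subsumed in the paper by working with the general $1$-form $\alpha$ from the outset.

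The genuine gap is your Step~2, which you yourself flag as the main obstacle. The claim that the matrix $(\partial I_k/\partial w_j)$ is ``strictly diagonally dominant / totally-positive-type, hence nonsingular'' is asserted, not proved, and it is precisely the nontrivial content of the proposition: the nonvanishing of the Jacobian of the $A_n$ period mapping is a known but genuinely nonelementary fact. The sign pattern of $\partial I_k/\partial q_j$ that you describe is correct, but a sign pattern alone does not give invertibility, and after imposing the constraint $\sigma_1(\vec q)=0$ and passing to the gap coordinates $w_k$ the entries mix in a way that makes uniform diagonal dominance on all of $D\cap V$ (including the regime where some $w_k\to0$ while others do not) far from obvious; no estimate is offered. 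The paper avoids this entirely: in Step~2 of Section~\ref{app:loc} it computes the limit of $\det\frac{\partial(I_1,\dots,I_n)}{\partial(h,\lambda)}$ as $h\to0$ along $\lambda=0$, reducing it to the nonvanishing of the classical determinant $D=\det\bigl(\int_{q_{k+1}(1,0)}^{q_{k+2}(1,0)}q^j\,\textup{d}q/\sqrt{\eta(1-q^{n+1})}\bigr)$, which is quoted from \cite[Theorem~5.25]{Zoladek2006} and \cite{Varchenko1987}; this gives \emph{infinitesimal} non-degeneracy, and then \cite[Theorem~1]{VarchenkoGivental1982} is invoked to conclude that the Jacobian is nonzero at every point of $\CC^n\setminus\Sigma^{\CC}$ near the origin. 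Your parenthetical fallback (``invoke the general results on non-vanishing of period Jacobians for $A_n$ unfoldings in \cite{VarchenkoGivental1982}'') is exactly the paper's route; if you take it, your proof closes and coincides in substance with the paper's, but as written the primary route leaves the crucial nondegeneracy unestablished.
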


We will give a proof of Proposition \ref {pro:inject} in Sec.~\ref {app:loc}, Step 6.

\begin{proposition} [Equivariance of the local action mapping under involutions] \label {pro:equivar}
For $n\ge2$, the swallowtail domain $D$ (see \eqref {eq:D}) is invariant under the involution $\phi^{-1}\circ\psi\circ\phi: (\R^n,0) \to (\R^n,0)$, where $\psi: (\R^n,0) \to (\R^n,0)$ is the involution having the following form in the local coordinates $\phi=(\widetilde h,\tilde\lambda)$
from Theorem \ref {thm:An:loc}:
\begin{equation} \label {eq:psi}
\psi: (\R^n,0) \to (\R^n,0), \quad 
\psi(\widetilde h,\tilde\lambda_1,\tilde\lambda_2,\dots,\tilde\lambda_{n-2},\tilde\lambda_{n-1}) = 
\left\{ \begin{array} {ll}
(\widetilde h,-\tilde\lambda_1,\tilde\lambda_2,\dots,-\tilde\lambda_{n-2},\tilde\lambda_{n-1}) & \mbox{if $n$ is odd}, \\
(-\widetilde h,\tilde\lambda_1,-\tilde\lambda_2,\dots,-\tilde\lambda_{n-2},\tilde\lambda_{n-1}) & \mbox{if $n$ is even}.
\end{array} \right.
\end{equation}
Consider the involution $\chi: \Delta\to \Delta$ having the form $(a_1,\dots,a_n)\mapsto(a_n,\dots,a_1)$, $(a_1,\dots,a_n)\in\R^n_{>0}$, under the natural identification $\Delta\approx\R^n_{>0}$. Then the local action mapping $I= (I_1,\dots,I_n) \colon \, D\to \Delta \approx \R^n_{>0}$ (see \eqref {eq:I:}) 
is equivariant under the involutions 
$\phi^{-1}\circ\psi\circ\phi: D\to D$ and $\chi: \Delta\to \Delta$, more specifically:
$\chi\circ I = I\circ(\phi^{-1}\circ\psi\circ\phi)$.
\end{proposition}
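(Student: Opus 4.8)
The plan is to reduce everything to the standard model \eqref{eq:loc:}, where the bifurcation diagram, the swallowtail domain $D$, the vanishing cycles and the action variables all have explicit polynomial descriptions, and then observe that the involution $\psi$ is precisely the symmetry of the polynomial family $H_{can}^\lambda(p,q) = \eta p^2 + q^{n+1}+\sum_j\lambda_j q^j$ induced by the substitution $q\mapsto -q$ (together with $h\mapsto h$, respectively $h\mapsto -h$, and $p\mapsto p$ or $p\mapsto ip$ as needed so that $H_{can}^\lambda$ is preserved up to the relevant sign). Concretely: if $n$ is odd, $q\mapsto-q$ sends $q^{n+1}$ to itself and $q^j$ to $(-1)^j q^j$, so the family is invariant under $(\lambda_j)\mapsto((-1)^j\lambda_j)$ with $h$ fixed; if $n$ is even, $q\mapsto-q$ sends $q^{n+1}$ to $-q^{n+1}$, so after also negating $h$ and replacing $p\mapsto ip$ (absorbed by $\eta\to-\eta$, but for even $n$ we have $\eta=+1$ and instead negate the whole Hamiltonian) one gets invariance under $(h,(\lambda_j))\mapsto(-h,((-1)^{j+1}\lambda_j))$ — this is exactly $\psi$ as written in \eqref{eq:psi}. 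Thus $\psi$ is a genuine symmetry of $\mathcal F_{can}$ in the sense that it is covered by the linear symplectomorphism $(p,q)\mapsto(p,-q)$ (odd $n$) or $(p,q)\mapsto(-p,-q)$ composed with $H_{can}\mapsto -H_{can}$ (even $n$) of the $(p,q)$-plane; transporting this back through $\phi$ and $\Phi$ shows $\phi^{-1}\circ\psi\circ\phi$ preserves the Liouville foliation near $P$, hence in particular preserves the bifurcation diagram and therefore the open set $D$.

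Next I would track what this symmetry does to the labelled vanishing cycles. In the standard model, over $(h,\varepsilon)\in D$ the polynomial $q^{n+1}+\sum_j\lambda_j q^j - h$ has $n+1$ simple real roots $q_1(h,\varepsilon)<\dots<q_{n+1}(h,\varepsilon)$, and $\gamma_{h,\varepsilon,k}$ lies over the interval $[q_k,q_{k+1}]$. Under $q\mapsto -q$ the roots of the $\psi$-image polynomial are $-q_{n+1}<\dots<-q_1$, so the $k$-th root becomes $-q_{n+2-k}$; hence the interval $[q_k,q_{k+1}]$ is carried to $[-q_{n+2-k},-q_{n+1-k}]$, which is the $(n+1-k)$-th interval. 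Therefore the vanishing cycle with index $k$ is mapped to the one with index $n+1-k$. One must also check that this is consistent with the real/imaginary dichotomy \eqref{eq:real}: replacing $k$ by $n+1-k$ flips the parity $(-1)^{n+k}\mapsto(-1)^{n+(n+1-k)}=(-1)^{n+1-k}=-(-1)^{n+k}\cdot(-1)^{2n}= -(-1)^{n+k}$ only if... — here one uses that in the odd-$n$ case $q\mapsto-q$ is real symplectic (preserves $\R^2$) and in the even-$n$ case the extra sign on $H_{can}$ together with $p\mapsto -p$ exactly compensates, so the "real plane / imaginary plane" assignment of cycle $k$ matches that of cycle $n+1-k$ after the transformation. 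Consequently, on the level of action integrals $I_k(h,\varepsilon)=\frac1{2\pi}\int_{\gamma_{h,\varepsilon,k}}\alpha$, pulling back the $1$-form $\alpha|_{\varepsilon=\mathrm{const}}$ under the (sign-adjusted) linear symplectomorphism of the $(p,q)$-plane gives $\pm\alpha$, and since the normalization \eqref{eq:I} fixes the sign of each $I_k$, we obtain $I_k\circ(\phi^{-1}\circ\psi\circ\phi) = I_{n+1-k}$ for every $k$, i.e. $I\circ(\phi^{-1}\circ\psi\circ\phi) = \chi\circ I$, which is the asserted equivariance.

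The main obstacle I anticipate is the bookkeeping of signs: one has to verify that the linear symplectomorphism of the $(p,q)$-plane realizing $\psi$ genuinely pulls back $d\alpha|_{\varepsilon=0}=g\,dp\wedge dq$ with $g(0,0,0)>0$ to a form with positive leading coefficient (so that no spurious sign enters), and that the normalization conventions in \eqref{eq:I} — which pin down $I_k>0$ for real cycles and $-iI_k>0$ for imaginary cycles — are compatible with the index reversal $k\mapsto n+1-k$ under both parities of $n$. This amounts to checking that the target octant $\Delta$ in \eqref{eq:I:} is itself invariant under the coordinate reversal $\chi\colon(a_1,\dots,a_n)\mapsto(a_n,\dots,a_1)$ — which it is, by inspecting the three cases of \eqref{eq:I:}: for odd $n$ the pattern $\R_{>0},i\R_{>0},\R_{>0},\dots$ is palindromic since $n$ is odd, and for even $n$ the pattern $i\R_{>0},\R_{>0},\dots,\R_{>0}$ is also palindromic. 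Once this compatibility is confirmed, the equivariance $\chi\circ I = I\circ(\phi^{-1}\circ\psi\circ\phi)$ and the invariance of $D$ follow from the explicit model, and there is no analytic subtlety since everything happens within the standard normal form provided by Theorem~\ref{thm:An:loc}.
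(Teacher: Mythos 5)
Your strategy is the same as the paper's: realize $\psi$ as the push-forward under $\phi$ of a symmetry of the normal form of Theorem~\ref{thm:An:loc} built from $\tilde q\mapsto-\tilde q$, check that it preserves the foliation (hence $D$) and reverses the numbering of the vanishing cycles via the root permutation $q_k\mapsto-q_{n+2-k}$, and read off the action of $\chi$ on $\Delta$. The paper writes the lift $\Psi$ out on all $2n$ coordinates (including the $\tilde\mu_j$), whereas you work only in the reduced $(p,q)$-plane; that is a harmless economy since the period integrals live there.

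However, your bookkeeping in the even-$n$ case is not just ``to be checked'' but actually wrong as stated, and this is precisely the delicate point. For even $n$ the lift must be $(\tilde p,\tilde q)\mapsto(\pm i\tilde p,-\tilde q)$ (since $\eta=+1$ is fixed and one needs $\eta\tilde p^2\mapsto-\eta\tilde p^2$); this map is not a real symplectomorphism --- the paper's $\Psi$ has order $4$ and satisfies $\Psi^*\omega=-i\omega$ --- so it pulls back $\alpha|_{\varepsilon=\const}$ to $-i\alpha$ modulo exact forms, not to $\pm\alpha$. Correspondingly, replacing $k$ by $n+1-k$ \emph{flips} the real/imaginary type of the cycle when $n$ is even (compute $(-1)^{n+(n+1-k)}=-(-1)^{n+k}\cdot(-1)^{n}$), so your claim that the pattern $i\R_{>0},\R_{>0},\dots,\R_{>0}$ of $\Delta$ is palindromic is false: the first slot is imaginary and the last is real. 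The conclusion survives because $\chi$ is by definition the reversal of the real parameters $(a_1,\dots,a_n)$ under the identification $\Delta\approx\R^n_{>0}$ (so it tacitly multiplies the swapped entries by $\pm i$), and the factor $-i$ from $\Psi^*\alpha$ together with the orientation reversals $\gamma_{h,\varepsilon,k}\mapsto\gamma_{h,\varepsilon,n+1-k}^{(-1)^{k-1}}$ is exactly what converts real actions into imaginary ones and back, consistently with the normalization~\eqref{eq:I}. You should replace the ``$\pm\alpha$/palindromic'' argument for even $n$ by this explicit complex accounting; for odd $n$ your real involution $(\tilde p,\tilde q)\mapsto(-\tilde p,-\tilde q)$ (not $(\tilde p,-\tilde q)$, which is anti-symplectic) does the job exactly as in the paper.
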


\begin{proof}
By Theorem \ref {thm:An:loc}, we have symplectic coordinates $(\tilde\lambda,\tilde\mu,\tilde p,\tilde q)$ on a small neighborhood of the point $P$.
Define (in these coordinates) a self-diffeomorphism of a (small complex) neighborhood of the point $P$ by the formula
\begin{equation} \label {eq:Psi}
\Psi\colon\,
(\tilde\lambda,\tilde\mu,\tilde p,\tilde q)\mapsto 
\left\{ \begin{array} {ll}
(-\tilde\lambda_1,\tilde\lambda_2,-\tilde\lambda_3,\dots,\tilde\lambda_{n-1},-\tilde\mu_1,\tilde\mu_2,-\tilde\mu_3,\dots,\tilde\mu_{n-1},-\tilde p,-\tilde q) & \mbox{if $n$ is odd}, \\
(\tilde\lambda_1,-\tilde\lambda_2,\tilde\lambda_3,\dots,\tilde\lambda_{n-1},-i\tilde\mu_1,i\tilde\mu_2,-i\tilde\mu_3,\dots,-i\tilde\mu_{n-1},i\tilde p,-\tilde q) & \mbox{if $n$ is even}.
\end{array} \right.
\end{equation}
This diffeomorphism, at first, is a canonical involution if $n$ is odd; futhermore it is of order $4$ with $\Psi^*\omega=-i\omega$ if $n$ is even. At second, it preserves the Liouville foliation and induces the involution \eqref {eq:psi} on $\R^n$, i.e., satisfies the relation
\begin{equation} \label {eq:Psi:psi}
\mathcal F_{can}\circ\Psi = \psi\circ \mathcal F_{can}.
\end{equation}
At third, $\Psi$ induces the following permutation of the vanishing cycles \eqref {eq:vanish}:
\begin{align*}
\Psi(\gamma_{h,\varepsilon,k}) & = \gamma_{h,\varepsilon,n-k+1} &  \mbox{ if $n$ is odd}, & \\
\Psi(\gamma_{h,\varepsilon,k}) & = \gamma_{h,\varepsilon,n-k+1}^{(-1)^{k-1}} & \mbox{ if $n$ is even}, & \qquad 
1\le k\le n
\end{align*}
(here $\gamma_{h,\varepsilon,k}^{-1}$ means the cycle obtained from the cycle $\gamma_{h,\varepsilon,k}$ by reversing orientation).
Hence, the standard swallowtail domain $\phi(D)$ is invariant under the involution \eqref {eq:psi}. If we identify the swallowtail domain with the positive octant $\Delta$ via the homeomorphism \eqref {eq:I:} near the origin (due to Proposition \ref {pro:inject}), the involution \eqref {eq:psi} induces the following involution $\chi := I\circ\phi^{-1} \circ \psi \circ\phi\circ I^{-1} \colon\, \Delta\to\Delta$ on the positive octant $\Delta \approx \R^n_{>0}$:
\begin{align*}
\chi(a_1,ia_2,a_3,ia_4,\dots,a_n) & = (a_n,\dots,ia_4,a_3,ia_2,a_1) &  \mbox{ if $n$ is odd, $\eta=+1$}, \\
\chi(ia_1,a_2,ia_3,a_4,\dots,ia_n) & = (ia_n,\dots,a_4,ia_3,a_2,ia_1) &  \mbox{ if $n$ is odd, $\eta=-1$}, \\
\chi(ia_1,a_2,ia_3,a_4,\dots,a_n) & = (ia_n,\dots,ia_4,a_3,ia_2,a_1) &  \mbox{ if $n$ is even},
\end{align*}
where $(a_1,a_2,a_3,a_4,\dots,a_n) \in \R^n_{>0}$. 
We see that the involution $\chi:\Delta\to\Delta$ has the desired form $(a_1,\dots,a_n)\mapsto (a_n,\dots,a_1)$ under the natural identification $\Delta \approx \R^n_{>0}$.
We have $\chi\circ I = I\circ(\phi^{-1}\circ\psi\circ\phi)$, as required.
\end{proof}

\section {Semi-local symplectic normal form} \label {sec:semiloc}

Suppose that $P\in M$ is unfolded $A_n$ singular point, and $\mathcal O$ is the $\R^n$-orbit containing this point. Suppose that this orbit is compact. 
Clearly, it is $(n-1)$-dimensional torus. 
It follows from \cite {Zung2000} (using \eqref{eq:Morse:type:}) that, in a neighbourhood of $\mathcal O$, there exists a Hamiltonian locally-free action of the $(n-1)$-dimensional torus $T^{n-1}$ with one of the orbits given by $\mathcal O$. This action is generated by some functions $J_1,\dots,J_{n-1}$ which are real-analytic functions of $(F_1,\dots,F_n)$. This action is in fact free (this can be shown from the local presentation \eqref {eq:Morse:type:}).

After this, we can replace the momentum map $\mathcal F=(F_1,\dots,F_n)$ with $(H,J_1,\dots,J_{n-1})$. Thus, we can further assume that $F_j=J_j=\varepsilon_j$ for $1\le j\le n-1$ in Sec.~\ref{sec:An}.

By applying the symplectic reduction procedure (see \eqref {eq:coord:loc}, \eqref {eq:Omega}),
we can introduce symplectic coordinates \\
$(J_1,\dots,J_{n-1}, \varphi_1,\dots,\varphi_{n-1},u,v)$ on a neighbourhood of the whole orbit $\mathcal O$, with angular coordinates $\varphi_j=\varphi_j\mod2\pi$, in which the 2-form $\omega$ reads \eqref{eq:Omega} in the whole neighborhood of the singular orbit $\mathcal O$, where $F_j=J_j$ and $\tau_j=\varphi_j\in\R/2\pi\Z$, $1\le j\le n-1$. After this, we can apply coordinate changes \eqref {eq:Phi1}, \eqref {eq:phi1} which bring the pair $(\omega,\mathcal F)$ to the preliminary normal form (similar to \eqref {eq:Morse:type:}) on this neighborhood 
\begin{equation*} 
(\widehat\Phi^{-1})^*\omega = \sum_{j=1}^{n-1} dJ_j \wedge d\varphi_j + d\alpha, \qquad 
\widehat\phi \circ \mathcal F \circ \widehat\Phi^{-1}
= \mathcal F_{can} := (H_{can}^{\lambda(J_1,\dots,J_{n-1})}(p,q), \lambda(J_1,\dots,J_{n-1})).
\end{equation*}

Our main semi-local result is the following normal form result and symplectic classification for 
unfolded $A_n$ singular orbits, which we will get from our local result (Theorem \ref {thm:An:loc}).

\begin{theorem} [Symplectic normal form for unfolded $A_n$ singular orbits]
\label{thm:An:semiloc}
Let $\mathcal O$ be a compact unfolded $A_n$ singular orbit of a real-analytic integrable system $\mathcal F = (F_1, \dots, F_n) \colon M^{2n} \to \mathbb R^n$, $n\ge2$. 
Consider real-analytic canonical coordinates $\Phi = (\tilde J = \tilde\lambda, \tilde \mu, \tilde p,\tilde q)$ and the functions $(\tilde H,\tilde J_1,\dots,\tilde J_{n-1}) = \phi\circ \mathcal F$ in a neighborhood of some point $P \in \mathcal O$ as in Theorem \ref {thm:An:loc}.
Let $J_j(F_1,\dots,F_n) = J_j\circ\mathcal F$, 
$1\le j\le n-1$, be the first integrals of the system on a neighbourhood of $\mathcal O$ 
generating $2 \pi$-periodic Hamiltonian flows in a neighbourhood of $\mathcal O$ such that 
$J_j(\mathcal F(\mathcal O)) = 0$ and 
$\det(\partial_{\tilde J_i}(J_j\circ\phi^{-1}))_{1\le i,j\le n-1} > 0$ at the origin (they exist, due to \cite{Zung2000}).
Then the functions $\tilde H,\tilde J_1,\dots,\tilde J_{n-1}$ and the symplectic structure $\omega$ have the following (semi-local) normal form near $\mathcal O$:
\begin{equation} \label {eq:semiloc}
\tilde H = \eta P^2 + Q^{n+1} + \sum_{j=1}^{n-1} \tilde\lambda_j Q^j, \quad \tilde J_j = \tilde\lambda_j, \qquad
\omega = \sum_{j=1}^{n-1} \textup{d} (J_j\circ\phi^{-1}(\tilde H,\tilde J_1,\dots,\tilde J_{n-1})) \wedge \textup{d} \psi_j + \textup{d}P \wedge \textup{d}Q
\end{equation}
in some real-analytic coordinates 
$(\tilde J=\tilde\lambda, \psi,P,Q) : U \to D^{n-1} \times T^{n-1}\times D^2$ 
on a neighbourhood $U$ of $\mathcal O$, in which $\mathcal O=(0,\dots,0)\times T^{n-1}\times(0,0)$. 
The tuple of functions $J\circ\phi^{-1} = (J_1\circ\phi^{-1},\dots,J_{n-1}\circ\phi^{-1})^T$ is rigid in the following sense: 
it is uniquely defined if $n=2$;
it is uniquely defined up to replacing it with $AJ\circ\phi^{-1}$, $A\in SL(n-1,\mathbb Z)$, if $n\ge4$ is even;
it is uniquely defined up to replacing it with $AJ\circ\phi^{-1}$ or $AJ\circ\phi^{-1}\circ\psi$, $A\in SL(n-1,\mathbb Z)$, if $n\ge3$ is odd (here $\psi$ is the involution \eqref {eq:psi}).
\end{theorem}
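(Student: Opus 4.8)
The plan is to reduce the semi-local statement to the local normal form (Theorem~\ref{thm:An:loc}) together with the period/action-variable analysis (Propositions~\ref{pro:inject}, \ref{pro:equivar}). First I would start from the preliminary normal form near $\mathcal O$ displayed just before the theorem: symplectic coordinates $(J,\varphi,p,q)$ in which $\omega = \sum_j dJ_j\wedge d\varphi_j + d\alpha$ and $\widehat\phi\circ\mathcal F\circ\widehat\Phi^{-1} = \mathcal F_{can} = (H_{can}^{\lambda(J)}(p,q),\lambda(J))$. Applying Theorem~\ref{thm:An:loc} near the point $P\in\mathcal O$ gives the local coordinates $(\tilde\lambda,\tilde\mu,\tilde p,\tilde q)$; the content of the semi-local theorem is that one can \emph{globalize the $(\tilde p,\tilde q)$-part and the $\tilde\lambda$-part over the whole torus $\mathcal O$} while keeping $\omega$ in the stated form, at the cost of allowing $\omega$ to carry the nontrivial ``twisting'' term $\sum_j d(J_j\circ\phi^{-1}(\tilde H,\tilde J))\wedge d\psi_j$. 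So the key step is: having fixed $\tilde H = \eta\tilde p^2 + \tilde q^{n+1}+\sum\tilde\lambda_j\tilde q^j$ and $\tilde J_j = \tilde\lambda_j$ as functions on the neighborhood of $\mathcal O$, choose the remaining ``angle-type'' coordinates $\psi_j$ so that the symplectic form acquires exactly the shape in \eqref{eq:semiloc}.

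The technical heart is a Moser-type / generating-function argument on $U$. The two-form $\omega$ and the model two-form $\omega_0 := \sum_j d(J_j\circ\phi^{-1}(\tilde H,\tilde J))\wedge d\psi_j + dP\wedge dQ$ both have $(\tilde H,\tilde J_1,\dots,\tilde J_{n-1})$ as a maximal set of commuting integrals, and on a neighborhood of $\mathcal O$ both descend, after the free $T^{n-1}$-reduction generated by the $J_j$, to two-forms on the two-dimensional reduced spaces $D^2_{\varepsilon}$ that are cohomologous (a single area form on a disk is a total differential, $d\alpha|_{\lambda(\varepsilon)} = g\,dp\wedge dq$ with $g>0$). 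Equivalently, the statement is that the reduced period mapping of $\alpha$ over the vanishing cycles $\gamma_{h,\varepsilon,k}$ — i.e.\ the local action mapping $I$ of Proposition~\ref{pro:inject} — already determines $\alpha$ up to a fiberwise symplectomorphism; since in the model $\omega_0$ the reduced form $dP\wedge dQ$ has its own action mapping, and since the relation between $(J\circ\phi^{-1})$ and the ``true'' actions is encoded in the choice made in the theorem, one checks the two systems have the same actions and invokes the one-degree-of-freedom symplectic classification with parameters (Vey/VarchenkoGivental-type, used already in Theorem~\ref{thm:An:loc}) to produce the fiberwise symplectomorphism. The new point over the torus is that the ``constant of integration'' in the generating function can be chosen $T^{n-1}$-equivariantly; here one uses that $H^1(T^{n-1})$-valued ambiguities are precisely absorbed by the shift $\psi_j\mapsto\psi_j + (\text{function of }\tilde H,\tilde J)$, which keeps $\omega$ in the form \eqref{eq:semiloc}.

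For the rigidity claims I would argue as follows. Any other normalizing pair $(\widetilde\Phi,\widetilde\phi)$ induces, near $P$, a normalizing pair for the \emph{local} singularity, so by the rigidity part of Theorem~\ref{thm:An:loc} the map $\widetilde\phi$ equals $\phi$ (if $n$ even) or $\phi$ or $\psi\circ\phi$ (if $n$ odd, with $\psi$ as in \eqref{eq:psi}), where I use that the $\psi_1$-ambiguity is only present for $n=1$. Next, the functions $J_j\circ\phi^{-1}$ are characterized as the generators of the free Hamiltonian $T^{n-1}$-action near $\mathcal O$ normalized by $J_j(\mathcal F(\mathcal O))=0$ and the positivity of the Jacobian determinant; the lattice $\Z^{n-1}$ of such generating tuples is well defined, so any two admissible choices differ by $A\in GL(n-1,\Z)$, and the determinant-positivity condition cuts this down to $A\in SL(n-1,\Z)$. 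Composing with the discrete ambiguity of $\widetilde\phi$ gives exactly the three cases in the statement: uniqueness for $n=2$ (here $SL(0,\Z)$ and $SL(1,\Z)$ are trivial, and there is no $\psi$ for $n=2$ since $\psi$ as written acts nontrivially only for $n\ge3$), $J\circ\phi^{-1}\mapsto AJ\circ\phi^{-1}$ for $n\ge4$ even, and additionally the composition with $\psi$ for $n\ge3$ odd. The one place to be careful — and what I expect to be the main obstacle — is the equivariant globalization in the Moser step: showing that the fiberwise symplectomorphism matching $\omega$ to $\omega_0$, constructed a priori only on the reduced disk-bundle, can be chosen to commute with the $T^{n-1}$-action (equivalently, that no monodromy obstruction in $H^1(T^{n-1};\text{(germs of functions)})$ survives), and that the residual freedom is exactly the reparametrization $\psi_j\mapsto\psi_j+(\cdot)$ and the $SL(n-1,\Z)$-action; here Proposition~\ref{pro:equivar} (equivariance of the action mapping under the model involution) is what guarantees the discrete $\psi$-ambiguity is genuinely there and not more.
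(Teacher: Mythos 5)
Your rigidity argument essentially coincides with the paper's: local rigidity of $\phi$ from Theorem~\ref{thm:An:loc} plus the observation that two admissible tuples of $2\pi$-periodic generators differ by an element of $SL(n-1,\Z)$ after imposing the orientation condition. One point you gloss over: you assert that ``the lattice $\Z^{n-1}$ of such generating tuples is well defined'', but a priori the two tuples $J^1$ and $J^2\circ\widetilde\phi$ could generate two \emph{different} free $T^{n-1}$-actions near $\mathcal O$, in which case they would not lie in a common rank-$(n-1)$ lattice. The paper closes this by noting that otherwise one would obtain an effective Hamiltonian $T^{n}$-action preserving $\mathcal F^1$ near $\mathcal O^1$, forcing the orbit to be non-degenerate elliptic, contradicting the unfolded $A_n$ assumption for $n\ge2$. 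You should supply this (or an equivalent) argument; without it the reduction to a single $SL(n-1,\Z)$-ambiguity is not justified.

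For the existence of the normal form you take a genuinely different and substantially harder route than the paper, and it is precisely at the point you yourself flag --- the $T^{n-1}$-equivariant globalization of the Moser/generating-function step and the possible monodromy obstruction in $H^1(T^{n-1};\cdot)$ --- that your argument is left open. The paper avoids this issue entirely: it takes the local coordinates $(\tilde\lambda,\tilde\mu,\tilde p,\tilde q)$ already produced by Theorem~\ref{thm:An:loc} near one point $P\in\mathcal O$, restricts $\tilde p,\tilde q$ to the transversal disk $\{\tilde\mu=0\}$, and then propagates them over the whole orbit by declaring them invariant under the $2\pi$-periodic flows of $J_1\circ\mathcal F,\dots,J_{n-1}\circ\mathcal F$, taking the flow times as the angles $\psi_j$. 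Because these flows are genuinely $2\pi$-periodic (the free $T^{n-1}$-action of Zung), the coordinates close up with no monodromy, and the form \eqref{eq:semiloc} drops out by the standard action--angle computation; no second application of the Varchenko--Givental period-mapping machinery and no equivariant Moser argument are needed. Also note that Proposition~\ref{pro:equivar} does not play the role you assign to it: it concerns the equivariance of the \emph{local action mapping} under the involution $\psi$ and feeds into the rigidity/classification statements, not into the globalization of the symplectomorphism. As written, your existence proof has a genuine gap at its central step; I recommend replacing it by the direct flow-extension construction.
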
 

\begin{remark} \label {rem:semiloc:}
Remark that the semi-local normal form \eqref {eq:semiloc} depends on the following parameters: 
\begin{itemize}
\item if $n=2m$ is even, then $\eta=+1$ and the parameter is the mapping germ
\begin{equation} \label {eq:J}
J\circ\phi^{-1} = (J_1,\dots,J_{n-1})^T\circ\phi^{-1} : (\R^n,0) \to (\R^{n-1},0),
\end{equation}
which is the tuple of function germs $J_j\circ\phi^{-1}$ at the origin such that the functions 
$J_j\circ\phi^{-1}(\tilde H,\tilde J_1,\dots,\tilde J_{n-1})
= J_j\circ\mathcal F$ generate a free $T^{n-1}$-action near $\mathcal O$, $1\le j\le n-1$;
\item if $n=2m+1$ is odd, then the parameters are $\eta=\pm1$ 
and the unordered pair of mapping germs $\{J\circ\phi^{-1},\ 
J\circ\phi^{-1}\circ\psi\}$ (see \eqref {eq:J} and \eqref {eq:psi}).
\end{itemize} 
Let us fix $n\ge3$ and $\eta=\pm1$.
Then the normal form \eqref {eq:semiloc} is not unique, since it depends on the choice of basic cycles on the torus $T^{n-1}$, as well as on ordering of the unordered pair $\{J\circ\phi^{-1},\ J\circ\phi^{-1}\circ\psi\}$ when $n$ is odd. 
If we change the basic cycles using an integer matrix $A=(a^i_j)\in SL(n-1,\Z)$, we obtain corresponding changes of the angular variables $\psi_j$ and their canonically conjugated variables $J_i$ in \eqref {eq:semiloc} (using the matrices $A^{-1}$ and $A^T$, respectively). 
After this change, the mapping germ $J\circ\phi^{-1}$ will be replaced with 
\begin{equation} \label {eq:J:}
J'\circ\phi^{-1}=(J_1',\dots,J_{n-1}')^T\circ\phi^{-1} : (\R^n,0) \to (\R^{n-1},0), \qquad J_i'=\sum_{j=1}^{n-1}a^i_j J_j,
\end{equation}
which is different from the initial germ \eqref {eq:J} if the matrix $A=(a^i_j)$ is not the identity matrix (this follows from non-degeneracy of the matrix $(\partial_{\tilde J_i}(J_j\circ\phi^{-1}))_{1\le i,j\le n-1}$ at the origin in Theorem \ref {thm:An:semiloc}). Thus the mapping germ \eqref {eq:J:} yields another normal form of the same integrable system (written in other coordinates).
This happens, as soon as the mapping germs \eqref{eq:J} and \eqref{eq:J:} belong to the same $SL(n-1,\Z)$-orbit under the action of the group $SL(n-1,\Z)$ on the set of mapping germs $f=(f_1,\dots,f_{n-1})^T : (\R^n,0)\to(\R^{n-1},0)$ given by
$$
Af=(\sum_{j=1}^{n-1}a^1_j f_j,\dots,\sum_{j=1}^{n-1}a^{n-1}_j f_j)^T, \quad A=(a^i_j)\in SL(n-1,\Z).
$$
\end{remark}

\begin{remark} \label {rem:semiloc}
Let us show how to choose (for $n\ge2$) a unique representative $[f]$ in the $SL(n-1,\Z)$-orbit of a germ $f=(f_1,\dots,f_{n-1})^T : (\R^n,0) \to (\R^{n-1},0)$. We will assume that the vectors 
$v_i = \frac{\partial f_i(0,\dots,0)}{\partial(\tilde\lambda_1,\dots,\tilde\lambda_{n-1})} \in \R^{n-1}$, $i=1,\dots,n-1$, are linearly independent and define a positive orientation
(this is true for the mapping germ $f := J\circ\phi^{-1}$ from Theorem \ref {thm:An:semiloc}).
If $n=2$, put $[f]:=f$.
Suppose that $n\ge3$. Denote by 
$Z\subset\R^{n-1}$ the subgroup (lattice) generated by the vectors $v_i$, $1\le i\le n-1$. Choose vectors $[v_1],\dots,[v_{n-1}] \in Z$ inductively as follows. 
Let $[v_1]\in Z$ be the shortest non-zero element of $Z$.
If $[v_1],\dots,[v_i]$ are chosen (for some $1\le i\le n-2$), let $[v_{i+1}]\in Z$ be the shortest element of $Z\setminus\langle[v_1],\dots,[v_i]\rangle$.
Clearly, the vectors $[v_1],\dots,[v_{n-1}]$ obtained in this way form a basis of $Z$. 
Consider an integer matrix $A=(a^i_j)\in SL(n-1,\Z)$ such that $[v_i] = \sum_{j=1}^{n-1}a^i_j v_j$. Define the mapping germ
$$
[f] := (\sum_{j=1}^{n-1}a^1_j f_j,\dots,\sum_{j=1}^{n-1}a^{n-1}_j f_j)^T : (\R^n,0)\to(\R^{n-1},0).
$$
Clearly, $[f]$ belongs to the $SL(n-1,\Z)$-orbit of the mapping germ $f=(f_1,\dots,f_{n-1})^T$.
It may happen that such a mapping germ $f$ is not unique; if this is the case, there are finitely many such germs, and we can choose a unique germ $[f]$ among them (the ``biggest'' one) using lexicographic order on the set of germs (more specifically, one considers such a mapping germ $f$ as a sequence of its Taylor coefficients, listed in some order, then the set of such sequences, $\R^\N$, has a usual lexicographic order).
 
A mapping germ $f$ will be called {\it special} if $f = [f]$. 
Our construction of $[f]$ implies that the $SL(n-1,\Z)$-orbit of $f$ containes a unique special mapping germ. Due to Theorem \ref {thm:An:semiloc} and 
Remark \ref {rem:semiloc:}, the mapping germs $J\circ\phi^{-1}$ and $[J\circ\phi^{-1}]$ correspond to symplecically equivalent integrable systems near their unfolded $A_n$ singular orbits. 
\end{remark}

This allows us to get the following semi-local symplectic classification for unfolded $A_n$ singularities.

\begin{corollary} [Symplectic classification of unfolded $A_n$ singular orbits] \label {cor:An:semiloc:class}
Under the hypotheses of Theorem \ref {thm:An:semiloc}, let $[J\circ\phi^{-1}]: (\R^n,0)\to(\R^{n-1},0)$ be the special mapping germ (see Remark \ref{rem:semiloc}) in the $SL(n-1,\Z)$-orbit of the mapping germ $J\circ\phi^{-1}: (\R^n,0)\to(\R^{n-1},0)$.
If $n$ is even, put $\ldbrack J\circ\phi^{-1}\rdbrack := [J\circ\phi^{-1}]$.
If $n$ is odd, consider two mapping germs $[J\circ\phi^{-1}], [J\circ\phi^{-1}\circ\psi]: (\R^n,0)\to(\R^{n-1},0)$, choose one of them which is ``not smaller'' than the other with respect to lexicographic order, and denote it by $\ldbrack J\circ\phi^{-1}\rdbrack$.
Then the real-analytic mapping germ $\ldbrack J\circ\phi^{-1}\rdbrack$ classifies the singularity at the orbit $\mathcal O$ up to real-analytic (semi-local, left-right) symplectic equivalence. 
In other words, two integrable systems $(M^i,\omega^i,\mathcal F^i)$ are symplectically equivalent near their unfolded $A_n^\eta$ singular orbits $\mathcal O^i$ ($i=1,2$, $n\ge2$) if and only if $\ldbrack J^1\circ(\phi^1)^{-1}\rdbrack = \ldbrack J^2\circ(\phi^2)^{-1}\rdbrack$.
Here $\Phi^i$ are symplectic coordinates centered at a point $P^i\in\mathcal O^i$ in which the $i$-th system has the local normal form 
$\phi^i\circ\mathcal F^i = \mathcal F^i_{can}\circ\Phi^i$ for some diffeomorphism germ $\phi^i:(\R^n,\mathcal F^i(\mathcal O^i))\to (\R^n,0)$.
\end {corollary}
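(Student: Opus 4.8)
The plan is to deduce the corollary from the semi-local normal form Theorem~\ref{thm:An:semiloc} (both the normal form \eqref{eq:semiloc} and its rigidity clause) together with the normalization procedure of Remark~\ref{rem:semiloc}. I would organize the argument around two facts. (a) \emph{Two unfolded $A_n^\eta$ singular orbits are symplectically equivalent if and only if the associated mapping germs $J\circ\phi^{-1}\colon(\R^n,0)\to(\R^{n-1},0)$ lie in a common $SL(n-1,\Z)$-orbit when $n$ is even, respectively in $SL(n-1,\Z)$-orbits which, together with their $\psi$-twists, form the same unordered pair when $n$ is odd.} (b) \emph{The assignment $f\mapsto\ldbrack f\rdbrack$ of Remark~\ref{rem:semiloc} is well defined and is a complete invariant of the equivalence relation on mapping germs appearing in (a).} Granting (a) and (b), the corollary is immediate: by (a) the semi-local symplectic type of the singularity is exactly that equivalence class of mapping germs, and by (b) it is faithfully encoded by the single germ $\ldbrack J\circ\phi^{-1}\rdbrack$; recall also that $\eta$ (relevant only for $n$ odd) is itself determined by the singularity, so no information is lost by fixing it. Fact (a) is essentially Theorem~\ref{thm:An:semiloc} repackaged; I spell out its two implications next, and discuss (b) last.

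For the ``if'' half of (a) I would argue as follows. Assume $J^1\circ(\phi^1)^{-1}$ and $J^2\circ(\phi^2)^{-1}$ lie in a common $SL(n-1,\Z)$-orbit, or, for $n$ odd, that $J^1\circ(\phi^1)^{-1}$ lies in the orbit of $J^2\circ(\phi^2)^{-1}$ or of $J^2\circ(\phi^2)^{-1}\circ\psi$. Put each system into the normal form \eqref{eq:semiloc} via Theorem~\ref{thm:An:semiloc}. By Remark~\ref{rem:semiloc:}, changing the basic cycles of $T^{n-1}$ by a matrix $A\in SL(n-1,\Z)$ keeps the shape \eqref{eq:semiloc} and replaces the mapping germ $J\circ\phi^{-1}$ by $AJ\circ\phi^{-1}$; and, when $n$ is odd, the involution $\Psi$ of \eqref{eq:Psi} — a real-analytic symplectomorphism for $n$ odd by the proof of Proposition~\ref{pro:equivar}, extended from a point $P\in\mathcal O$ over the whole orbit $\mathcal O$ — carries the normal form with datum $J\circ\phi^{-1}$ onto the one with datum $J\circ\phi^{-1}\circ\psi$. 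Composing such moves brings the two systems to the \emph{same} normal form \eqref{eq:semiloc} (same $\eta$ by hypothesis, same mapping germ), so the composite of the corresponding coordinate changes is the required real-analytic symplectic equivalence near $\mathcal O^1$ and $\mathcal O^2$.

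For the ``only if'' half of (a) I would take a real-analytic symplectic equivalence $(\Phi,\phi)$ near $\mathcal O^1,\mathcal O^2$ with $\phi\circ\mathcal F^1=\mathcal F^2\circ\Phi$, and fix normal-form coordinates $(\Phi_{\mathrm{nf}},\phi_{\mathrm{nf}})$ for the second system as in Theorem~\ref{thm:An:semiloc}. Then $(\Phi_{\mathrm{nf}}\circ\Phi,\ \phi_{\mathrm{nf}}\circ\phi)$ is again a set of normal-form coordinates for the first system: one has $((\Phi_{\mathrm{nf}}\circ\Phi)^{-1})^*\omega^1=(\Phi_{\mathrm{nf}}^{-1})^*\bigl((\Phi^{-1})^*\omega^1\bigr)=(\Phi_{\mathrm{nf}}^{-1})^*\omega^2$ and $\phi_{\mathrm{nf}}\circ\phi\circ\mathcal F^1\circ(\Phi_{\mathrm{nf}}\circ\Phi)^{-1}=\phi_{\mathrm{nf}}\circ\mathcal F^2\circ\Phi_{\mathrm{nf}}^{-1}=\mathcal F_{can}$, and this presentation of the first system has mapping germ $J^2\circ(\phi^2)^{-1}$. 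Applying the rigidity clause of Theorem~\ref{thm:An:semiloc} to the two normal-form presentations of the first system gives $J^1\circ(\phi^1)^{-1}=A\,J^2\circ(\phi^2)^{-1}$, or $=A\,J^2\circ(\phi^2)^{-1}\circ\psi$ when $n$ is odd, for some $A\in SL(n-1,\Z)$; hence the germs lie in a common ($\psi$-refined) orbit, and then $\ldbrack J^1\circ(\phi^1)^{-1}\rdbrack=\ldbrack J^2\circ(\phi^2)^{-1}\rdbrack$ by (b).

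I expect the only genuinely non-formal point to be (b), i.e.\ that $f\mapsto\ldbrack f\rdbrack$ is well defined and exactly separates the relevant orbits. This comes down to the greedy/Minkowski-type reduction of Remark~\ref{rem:semiloc}: the lattice $Z\subset\R^{n-1}$ spanned by the linear parts $v_i=\partial_{\tilde\lambda}f_i(0)$ is $SL(n-1,\Z)$-invariant because $A\in SL(n-1,\Z)$ acts on $Z$ by a lattice automorphism; the successive ``shortest vector'' choices determine the reduced basis up to a finite group of sign changes and permutations of equal-length vectors, so there are only finitely many candidate germs $[f]$ in a given orbit; the lexicographic tie-break on Taylor coefficients then pins down a unique one, depending only on the orbit; and for $n$ odd one finally orders the two special germs $[J\circ\phi^{-1}]$ and $[J\circ\phi^{-1}\circ\psi]$ by the same rule. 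A secondary, routine check is that the involution $\Psi$ of \eqref{eq:Psi} extends from a neighborhood of a point $P\in\mathcal O$ to a neighborhood of the whole orbit as a real-analytic symplectomorphism when $n$ is odd (its action on the angle variables lies in $GL(n-1,\Z)$). Everything else is bookkeeping of composed coordinate changes.
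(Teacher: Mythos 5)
Your proposal is correct and follows essentially the same route as the paper: the ``only if'' direction from the rigidity clause of Theorem~\ref{thm:An:semiloc}, the ``if'' direction from the $SL(n-1,\Z)$ moves of Remark~\ref{rem:semiloc:} together with the involution identity \eqref{eq:Psi:psi}, and the well-definedness of $f\mapsto\ldbrack f\rdbrack$ from the lattice reduction of Remark~\ref{rem:semiloc}. The only cosmetic difference is that the paper packages the ``if'' direction by writing down an explicit model system $(C,\omega,\mathcal F)$ built from the germ (which also establishes that the invariant attains arbitrary values), whereas you bring both systems directly to a common normal form; these are the same argument.
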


We will prove Theorem \ref {thm:An:semiloc} and Corollary \ref {cor:An:semiloc:class} in Sec.~\ref {app:semiloc}.

Having the above definitions of the (semi-local) action variables $J_j\circ\mathcal F$, $1\le j\le n-1$, and the local action variables $I_k\circ\mathcal F$, $1\le k\le n$, see \eqref {eq:I:},
in mind, we can state the following result.

\begin{corollary} [Completeness of action variables for unfolded $A_n$ singular orbit] \label{cor:An:semiloc}
Consider a pair of $n$-degree of freedom real-analytic integrable systems $(M^i,\omega^i,\mathcal F^i)$, $i = 1, 2,$
having unfolded $A_n^\eta$ singularity at compact orbits $\mathcal O^i$ ($n\ge2$, $\eta=\pm1$). Let 
$V_i \simeq D^{n+1} \times T^{n-1}$ be a sufficiently small neighbourhood of $\mathcal O^i$. 
Consider the functions $J^i_j$, $1\le j\le n-1$, on the swallowtail domain $D_i\subset\R^n$ (see \eqref {eq:D}) such that the functions $J^i_j\circ\mathcal F^i$ generate a Hamiltonian free $T^{n-1}$-action on $V_i$ and 
$\det(\partial_{\tilde J^i_i}(J^i_j\circ(\phi^i)^{-1}))_{1\le i,j\le n-1} > 0$ at the origin 
 ($i=1,2$)\footnote{Remark that Corollary \ref {cor:An:semiloc} is about the functions $J^i_j$ of $(F_1^i,\dots,F_n^i)$, in contrast to Theorem \ref {thm:An:loc} which is about the functions $J^i_j\circ(\phi^i)^{-1}$ of $(\tilde H^i,\tilde J_1^i,\dots,\tilde J_{n-1}^i)$.}.
Put $m:=[\frac n2]$ and consider 
the function $I^i_{m+1}$ on the swallowtail domain $D_i$ such that 
the function $I^i_{m+1}\circ\mathcal F^i$ is the local action variable corresponding to the family of ``central'' vanishing cycles $\gamma^i_{h,\varepsilon,m+1}$ of the $i$-th system, $(h,\varepsilon)\in D_i$.
Then $\mathcal F^1$ and $\mathcal F^2$ are symplectically equivalent near the singular orbits $\mathcal O^1$ and $\mathcal O^2$ if and only if there exists a diffeomorphism germ $\phi \colon (\mathbb R^n,\mathcal F^1(\mathcal O^1)) \to (\mathbb R^n,\mathcal F^2(\mathcal O^2))$ 
that respects the swallowtail domains, $\phi(D_1)=D_2$, and makes the action variables equal, 
$\ldbrack J^1\rdbrack = \ldbrack J^2 \rdbrack \circ \phi \mbox{ and } 
I^1_{m+1} = I^2_{m+1} \circ \phi$ on $D_1$.
Here $\ldbrack J^i\circ(\phi^i)^{-1}\rdbrack:(\R^n,0)\to(\R^{n-1},0)$ is the special mapping germ (see Remark \ref{rem:semiloc}) defined unambiguously as in Corollary \ref {cor:An:semiloc:class},
$\ldbrack J^i\rdbrack := \ldbrack J^i\circ(\phi^i)^{-1}\rdbrack \circ \phi^i$ ($i=1,2$).
\end{corollary}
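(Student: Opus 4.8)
The plan is to deduce Corollary~\ref{cor:An:semiloc} from the semi-local classification already established in Corollary~\ref{cor:An:semiloc:class}, by showing that the pair consisting of the special germ $\ldbrack J\circ\phi^{-1}\rdbrack$ together with the single extra action germ $I_{m+1}\circ\phi^{-1}$ carries exactly the same information (modulo the discrete ambiguity) as the germ $\ldbrack J\circ\phi^{-1}\rdbrack$ by itself. The non-trivial direction is: if two systems have diffeomorphic swallowtail domains via some $\phi$ that equalizes $\ldbrack J^1\rdbrack = \ldbrack J^2\rdbrack\circ\phi$ and $I^1_{m+1} = I^2_{m+1}\circ\phi$, then they are symplectically equivalent near their singular orbits; the converse is immediate since a foliation-preserving symplectomorphism carries action variables to action variables and, by rigidity (Theorems~\ref{thm:An:loc} and~\ref{thm:An:semiloc}), respects the canonical structure on the bifurcation complex up to the listed discrete symmetries, which fix the central cycle $\gamma_{h,\varepsilon,m+1}$ setwise (up to orientation, hence up to sign of $I_{m+1}$, which is already absorbed in the normalization~\eqref{eq:I}).

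First I would reduce to the level of the bifurcation complex. By Corollary~\ref{cor:An:semiloc:class}, it suffices to prove that the hypotheses force $\ldbrack J^1\circ(\phi^1)^{-1}\rdbrack = \ldbrack J^2\circ(\phi^2)^{-1}\rdbrack$ as mapping germs $(\R^n,0)\to(\R^{n-1},0)$. The assumption already gives $\ldbrack J^1\rdbrack = \ldbrack J^2\rdbrack\circ\phi$, i.e.\ $\ldbrack J^1\circ(\phi^1)^{-1}\rdbrack\circ\phi^1 = \ldbrack J^2\circ(\phi^2)^{-1}\rdbrack\circ\phi^2\circ\phi$. So the whole content is to identify the diffeomorphism germ $\psi^0 := \phi^2\circ\phi\circ(\phi^1)^{-1}:(\R^n,0)\to(\R^n,0)$ with (at worst) one of the allowed discrete symmetries $\psi$ of~\eqref{eq:psi}. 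Here is where the extra action variable $I_{m+1}$ enters: the key point is that the intrinsic geometry of the swallowtail domain $D$ together with the period mapping $I$ rigidifies the coordinates $\phi=(\widetilde h,\tilde\lambda)$ on the bifurcation complex. Concretely, I would argue that $\psi^0$ must preserve the swallowtail domain $\phi(D)$ (since $\phi$ is assumed to match $D_1$ with $D_2$ and the $\phi^i$ are the normalizing coordinate changes), must preserve the stratification of its boundary (the swallowtail surface, by continuity and since these strata are the loci where the vanishing cycles degenerate), and must conjugate the local action mappings $I^1$ and $I^2$ on $D$, at least on the component $I_{m+1}$.

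The main step — and the main obstacle — is to show that these constraints pin down $\psi^0$ up to the discrete group generated by $\psi$. The strategy is: (i) use Proposition~\ref{pro:inject} to identify $D$ (near the origin) homeomorphically with the octant $\Delta\approx\R^n_{>0}$ via $I$, so that $\psi^0$ becomes a self-homeomorphism of $\Delta$ matching $I^2_{m+1}\circ(\psi^0) = I^1_{m+1}$; (ii) observe that the boundary strata of $\Delta$ correspond bijectively to subsets of vanishing cycles, and the central cycle's action $I_{m+1}$ vanishes exactly on the union of those facets where $\gamma_{\cdot,\cdot,m+1}$ degenerates — a distinguished pair of facets swapped by $\chi$; hence $\psi^0$ permutes these facets, which together with the combinatorics of how the estimates~\eqref{eq:swallow} order the coordinates $\lambda_{n-1},\dots,\lambda_1,\widehat h$ leaves only the identity and the involution $\psi$ (equivariance under which is exactly Proposition~\ref{pro:equivar}); (iii) on the remaining finite set of candidates, use that $\phi^i$ and hence $\psi^0$ are honest diffeomorphism germs (not merely homeomorphisms), so the linearization $d\psi^0(0)$ must send the lattice vectors $v_i$ of $J^1\circ(\phi^1)^{-1}$ to those of $J^2\circ(\phi^2)^{-1}$ compatibly with the specialness normalization of Remark~\ref{rem:semiloc}, killing any residual $SL(n-1,\Z)$ freedom. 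I expect step (ii) — controlling exactly which boundary facets $\psi^0$ may permute, i.e.\ proving that the degeneration pattern of the central cycle is a complete combinatorial invariant of that facet-pair — to require the most care, since it is where the specific geometry of the swallowtail (as opposed to a generic $(n-1)$-toric picture) is genuinely used; the parity cases ($n$ even vs.\ odd, $\eta=\pm1$) will have to be bookkept separately, following the case split already visible in~\eqref{eq:I:} and~\eqref{eq:psi}. Once $\psi^0\in\{\mathrm{id},\psi\}$ is established, Corollary~\ref{cor:An:semiloc:class} delivers the symplectic equivalence directly, completing the proof.
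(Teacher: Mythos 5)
Your overall architecture (reduce to Corollary~\ref{cor:An:semiloc:class} by pinning down $\psi^0=\phi^2\circ\phi\circ(\phi^1)^{-1}$ up to the involution $\psi$, then invoke Proposition~\ref{pro:inject} and the equivariance of Proposition~\ref{pro:equivar}) is the same as the paper's, and your treatment of the ``only if'' direction and of the residual $SL(n-1,\Z)$ freedom in step~(iii) matches the paper's proof. But there is a genuine gap at your step~(ii), which is exactly the crux of the corollary. Knowing that $\psi^0$ preserves the swallowtail domain $D$, preserves its boundary stratification, and satisfies the \emph{single} scalar relation $I^{can}_{m+1}\circ\psi^0=I^{can}_{m+1}$ does not pin $\psi^0$ down to a finite set: the level sets of $I_{m+1}$ are $(n-1)$-dimensional, so the group of diffeomorphism germs of $D$ preserving one coordinate of the period map (and even all boundary facets) is infinite-dimensional --- under the identification $I\colon D\to\Delta$ of Proposition~\ref{pro:inject}, a self-homeomorphism of the octant $\Delta$ that fixes the coordinate $a_{m+1}$ and each facet setwise can still act arbitrarily on the remaining coordinates. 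Your facet combinatorics controls which facets may be permuted, but it never forces $\psi^0$ to respect the \emph{other} components $I_1,\dots,I_n$ of the period mapping, and without all $n$ components the injectivity of Proposition~\ref{pro:inject} cannot be applied.

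The missing idea is the monodromy argument the paper uses at this point: since $\phi$ (hence $\psi^0$) is real-analytic and preserves the swallowtail, it preserves the complexified discriminant $\Sigma^{\mathbb C}$, so the relation $I^1_{m+1}=I^2_{m+1}\circ\phi$ continues analytically along loops in $\mathbb C^n\setminus\Sigma^{\mathbb C}$; the monodromy group acts transitively on the vanishing cycles of an $A_n$ singularity, so the single relation propagates to $(I^1_1,\dots,I^1_n)=(I^2_1,\dots,I^2_n)\circ\phi$ or its order-reversal (the latter only for odd $n$). Only after this upgrade does the comparison with $(I^1_1,\dots,I^1_n)=(I^2_1,\dots,I^2_n)\circ(\phi^2)^{-1}\circ\phi^1$ (which follows from Theorem~\ref{thm:An:loc}) together with injectivity yield $\phi^1=\phi^2\circ\phi$ or $\phi^1=\psi\circ\phi^2\circ\phi$ on $D_1$, and hence, by analytic continuation, on a full neighbourhood of the origin. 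This is precisely the reason why one central action variable suffices as an invariant, and it cannot be replaced by the boundary-facet bookkeeping of your step~(ii).
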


\begin{proof}
Let us prove the ``only if'' part.
Suppose that two integrable systems $(M^i,\omega^i,\mathcal F^i)$ are symplectically equivalent 
on some neighborhoods $U^i$ of their unfolded $A_n^\eta$ singular orbits $\mathcal O^i$ ($i=1,2$).
This means that there exist a symplectomorphism $\Phi:U^1\to U^2$ and a diffeomorphism germ $\phi:(\R^n,\mathcal F^1(\mathcal O^1))\to (\R^n,\mathcal F^2(\mathcal O^2))$ such that $\phi\circ\mathcal F^1 = \mathcal F^2 \circ \Phi$.
Since $\Phi$ is a fibrewise homeomorphism, $\phi$ respects the swallowtail domains, $\phi(D_1)=D_2$ (see Sec.~\ref {app:loc}, Step 5, for a proof). 
Finally, $\Phi$ transforms vanishing cycles of the first system to those of the second system and either preserves or reverses numbering of the vanishing cycles $\gamma^i_{h,\varepsilon,k}$, $1\le k\le n$, $(h,\varepsilon)\in D_i$ (see Sec.~\ref {app:loc}, Step 6, for a proof), moreover reversion of numbering is possible for odd $n$ only. 
We can and will assume that $\Phi$ preserves numbering of vanishing cycles 
(otherwise $n$ is odd and we replace $\Phi$ and $\phi$ with 
$((\Phi^2)^{-1}\circ\Psi\circ\Phi^2)\circ\Phi$ and $(\phi_2^{-1}\circ\psi\circ\phi_2)\circ\phi$, respectively; this replacement preserves the ``central'' vanishing cycle).
As follows from the proof of Corollary \ref{cor:An:semiloc:class}, the $SL(n-1,\Z)$-orbits of the mapping germs $J^1$ and $J^2\circ\phi$ coincide, thus the special elements (see Remark \ref{rem:semiloc}) of these orbits coincide too, i.e., $[J^1]=[J^2\circ\phi]$. 
Since $\Phi$ is a fibrewise symplectomorphism preserving numbering of vanishing cycles, it preserves the action variables corresponding to vanishing cycles, hence 
$I^1 = I^2 \circ \phi$ (since 
$I^1\circ\mathcal F^1 = I^2\circ\mathcal F^2\circ\Phi = I^2\circ\phi\circ\mathcal F^1$). 
Thus, $\phi$ transforms action variables of the second system to the action variables of the first system.

Let us prove the ``if'' part.
Suppose that two integrable systems $(M^i,\omega^i,\mathcal F^i)$ are given near their unfolded $A_n^\eta$ singular orbits $\mathcal O^i$ ($i=1,2$). Suppose that there exists a diffeomorphism germ $\phi \colon (\mathbb R^n,\mathcal F^1(\mathcal O^1)) \to (\mathbb R^n,\mathcal F^2(\mathcal O^2))$ that respects the swallowtail domains, $\phi(D_1)=D_2$, and makes the action variables equal, $I^1_{m+1} = I^2_{m+1} \circ \phi$ and $[J^1] = [J^2 \circ \phi]$ on $D_1$.
By changing a basis on the torus $\mathcal O^1$, we can replace the mapping germ $J^1$ with any element of its $SL(n-1,\Z)$-orbit, hence we can achieve that 
\begin{equation} \label {eq:J::}
J^1 = J^2 \circ \phi \qquad \mbox{on $D_1$}. 
\end{equation}
Since $\phi$ is a real-analytic diffeomorphism germ that respects the swallowtail domains and preserves the local action variable corresponding to the ``central''
vanishing cycle ($I^1_{m+1} = I^2_{m+1} \circ \phi$), it follows that it preserves the local action variable corresponding to any vanishing cycle that belongs to the orbit of the ``central'' vanishing cycle under the action of the monodromy group.
It is an easy exercise that the monodromy group transtively acts on the set of vanishing cycles. Thus we have
$$
(I_1^1,\dots,I_n^1) = (I_1^2,\dots,I_n^2)\circ\phi 
\quad \mbox{or} \quad 
(I_1^1,\dots,I_n^1) = (I_n^2,\dots,I_1^2)\circ\phi 
\qquad \mbox{on $D_1$},
$$
furthermore the latter case is possible for odd $n$ only, due to \eqref {eq:I:}. On the other hand, Theorem \ref {thm:An:loc} implies that 
$(I_1^1,\dots,I_n^1) = (I_1^2,\dots,I_n^2)\circ(\phi^2)^{-1}\circ\phi^1$ on $D_1$.
Since the local action mapping $(I_1^2,\dots,I_n^2)\colon\, D_2\to \Delta \approx \R^n_{>0}$ is injective by Proposition \ref {pro:inject}, it follows that
\begin{equation} \label {eq:tilde:HJ}
\phi^1 = \phi^2 \circ \phi  
\quad \mbox{or}  \quad 
\phi^1 = \psi \circ \phi^2 \circ \phi
\qquad \mbox{on $D_1$}, 
\end{equation}
respectively.
It follows from the equalities \eqref {eq:J::} and \eqref {eq:tilde:HJ} that they hold on a whole neighborhood of the origin in $\R^n$ by uniqueness of analyic continuation, furthermore the mapping germ $J^1\circ(\phi^1)^{-1}$ coincides with the mapping germ $J^2\circ(\phi^2)^{-1}$ or $J^2\circ(\phi^2)^{-1}\circ\psi$, respectively. Therefore, both integrable systems have the same canonical form \eqref{eq:semiloc} near their singular orbits $\mathcal O^i$, up to replacements from the rigidity assertion in Theorem \ref {thm:An:semiloc}. 
Hence, due to Theorem \ref {thm:An:semiloc}, Remark \ref {rem:semiloc:} and \eqref {eq:Psi:psi}, the systems are symplectically equivalent near these orbits.
\end{proof}

Note that Theorem~\ref{thm:An:semiloc} implies Corollary~\ref{cor:An:semiloc} (this corollary for $n=2$ was already proven in \cite{Bolsinov2018},
so Theorem~\ref{thm:An:semiloc} can be seen as a generalization of the result about completeness of the action variables).

\section {Semi-global symplectic classification} \label {sec:semiglob}

Sonsider the integral surface $\mathcal L = \{\mathcal F=\const\}$ containing a singular orbit $\mathcal O$ having unfolded $A_n$ singularity type. Suppose that $\mathcal L$ is connected and compact, and 
all points of $\mathcal L\setminus\mathcal O$ are regular.

Denote by $\ell$ the number of connected components of $\mathcal L\setminus\mathcal O$, i.e., $\ell := |\pi_0(\mathcal L\setminus\mathcal O)|$. Let us show that 
\begin{equation} \label {eq:ell}
\ell=1 \quad \mbox{if $n$ is even}, \qquad \qquad 
\ell=1-\eta \quad \mbox{if $n$ is odd}.
\end{equation} 
Consider the reduced Hamiltonian $H^{\tilde\lambda}_{can}(P,Q) = \eta P^2 + Q^{n+1} + \sum_{j=1}^{n-1}\tilde\lambda_jQ^j$, so $H^0_{can}(P,Q) = \eta P^2 + Q^{n+1}$ is $A_n$ singularity. On a small neighbourhood of the origin in the plane $(P,Q)$, the curve $\{H^0_{can}(P,Q)=0\}$ is a graph, with a vertex of degree $2\ell$ at the origin, where $\ell$ is given by \eqref {eq:ell}, as can be seen from Fig.~\ref {fig:A123}.

\begin{figure}[htbp]
\begin{center}
\includegraphics[width=0.6\linewidth]{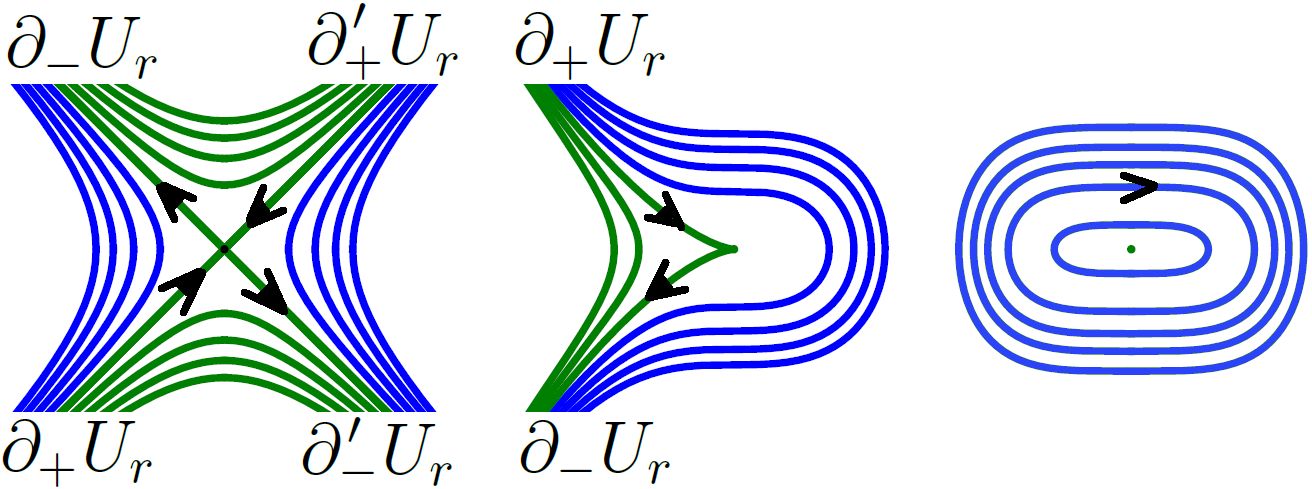} \quad
\includegraphics[width=0.25\linewidth]{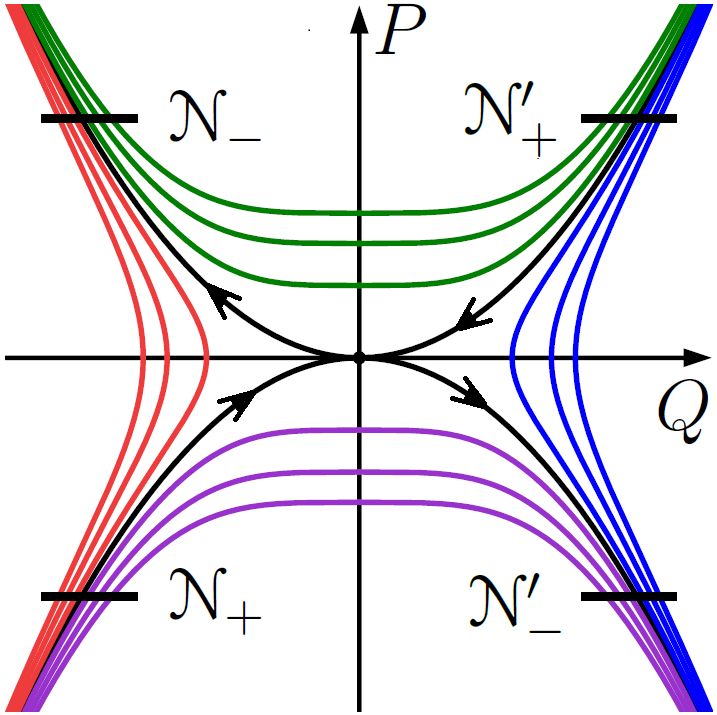}
\end{center}
\caption{Level sets of $A_n$ singularity: $n=1$, $\eta=-1$ (left), $n=2m$ (middle left), $n=2m+1$ with $\eta=+1$ (middle right) or $\eta=-1$ (right). Positive and negative boundaries $\partial_\pm U_r$, $\partial_\pm'U_r$ of $U_r$ (left) and the Lagrangian sections $\mathcal N_\pm\subset\partial_\pm U_r$, $\mathcal N_\pm'\subset\partial_\pm' U_r$ (right) shown in the plane $(P,Q)$}
\label{fig:A123}
\end{figure}

Before formulating our main result, let us construct semi-global invariants of the singularity. 
Recall that we already defined a local topological invariant (which is a sign $\eta=\pm1$ for odd $n$, see Theorem \ref {thm:An:loc}) and a semi-local symplectic invariant (which is a special mapping germ $\ldbrack J\circ\phi^{-1}\rdbrack:(\R^n,0)\to (\R^{n-1})$, see 
Corollary \ref {cor:An:semiloc:class}). 
Below (in Steps 1--3), we will construct two semi-global invariants; namely, a permutation $\beta\in \Sym_\ell$ (a topological invariant) and a mapping germ $S:(\R^n,0)\to (\R^\ell,0)$ (a symplectic invariant).

Suppose that $\mathcal O=\mathcal L$, i.e., $\ell=0$. This means that $n$ is odd and $\eta=+1$. In this case, the semi-local classification (Theorem \ref {thm:An:semiloc} and Corollaries \ref{cor:An:semiloc:class} and \ref{cor:An:semiloc}) works in semi-global setting as well, so we are done.

Suppose that $\mathcal L\ne\mathcal O$, i.e., $\ell>0$. In according to \eqref {eq:ell}, 
this means that either $n$ is odd and $\eta=-1$, or $n$ is even (thus $\eta=+1$). We will construct semi-global invariants in three steps.

Step 1. 
Consider the Hamiltonian $T^{n-1}$-action near the orbit $\mathcal O$ generated by the first integrals $J_j\circ\mathcal F$, $1\le j\le n-1$, from Theorem \ref {thm:An:semiloc}. As was proved by N.T.~Zung, these functions generate a Hamiltonian $T^{n-1}$-action near the fiber $\mathcal L$.

By Theorem \ref {thm:An:semiloc}, there exist real-analytic diffeomorphism germ $\phi:(\R^n,\mathcal F(\mathcal O)) \to (\R^n,0)$ and coordinates
\begin{equation} \label {eq:coord}
\Phi_U = (\tilde \lambda_1,\dots,\tilde \lambda_{n-1},\psi_1,\dots,\psi_{n-1},P,Q) : U \to D^{n-1}\times T^{n-1}\times D^2
\end{equation}
on a neighborhood $U$ of the singular orbit $\mathcal O$ in which the presentation \eqref {eq:semiloc} holds, moreover these coordinates can and will be chosen in such a way that 
$J\circ\phi^{-1} = \ldbrack J\circ\phi^{-1}\rdbrack$ (see Corollary \ref {cor:An:semiloc:class}).
We will also suppose that the image of $U$ under the coordinate map \eqref {eq:coord} is $T^{n-1}$-invariant and contains the (singular) curve $\{\tilde \lambda_1=\dots=\tilde \lambda_{n-1}=0,\ \psi=0,\ P=\pm t^{n+1},\ Q=\pm t^2 \mid t\in[0,1]\}$
(the more general situation will be considered in Step 3 below, using a small number $t_0\in(0,1]$). 

Note that the coordinates \eqref {eq:coord} are rigid in the following sense: they are uniquely defined up to replacing $\Phi_U$ with $\phi_{S_U\circ\mathcal F}^1\circ\Phi_U$ (or with $\phi_{S_U}^1\circ\widetilde\Psi\circ\Phi_U$ when either $n=1$ or $n\ge3$ is odd and $[J\circ\phi^{-1}] = [J\circ\phi^{-1}\circ\psi]$) where $\phi_{S_U\circ\mathcal F}^1$ is the time-one map of the Hamiltonian flow generated by a real-analytic function $S_U(F_1,\dots,F_n)=S_U\circ\mathcal F$, $\widetilde\Psi(\tilde \lambda_1,\dots,\tilde \lambda_{n-1},\psi_1,\dots,\psi_{n-1},P,Q) := (-\tilde \lambda_1,\tilde \lambda_2,-\tilde \lambda_3,\dots,\tilde \lambda_{n-1},-\psi_1,\psi_2,-\psi_3,\dots,\psi_{n-1},-P,-Q)$. 
Indeed: the diffeomorphism germ $\phi$ is that from Theorem \ref {thm:An:loc}, hence it is uniquely defined when $n$ is even or $[J\circ\phi^{-1}] \ne [J\circ\phi^{-1}\circ\psi]$ (respectively, it is uniquely defined up to replacing $\phi$ with $\psi\circ\phi$ when $n$ is odd and $[J\circ\phi^{-1}] = [J\circ\phi^{-1}\circ\psi]$), due to rigidity assertion in Theorem \ref {thm:An:loc} and the assumption $J\circ\phi^{-1} = \ldbrack J\circ\phi^{-1}\rdbrack$. 
Hence if $\widetilde\phi,\widetilde\Phi_U$ is another pair of diffeomorphism germs satisfying \eqref {eq:semiloc} and $J\circ\widetilde\phi^{-1} = \ldbrack J\circ\widetilde\phi^{-1}\rdbrack$, then $\Phi_1 := \widetilde\Phi_U\circ\Phi_U^{-1}$ is an $\mathcal F_{can}$-preserving symplectomorphism, $\mathcal F_{can}\circ\Phi_1 = \mathcal F_{can}$ 
(or $\mathcal F_{can}\circ\Phi_1 = \psi\circ\mathcal F_{can} = \mathcal F_{can}\circ\widetilde\Psi$, respectively). 
Since our singular fiber $\mathcal L$ contains a regular point, this implies that $\Phi_1$ has the form $\Phi_1 = \phi_{S_U}^1$, i.e., 
$\widetilde\Phi_U = \phi_{S_U}^1\circ\Phi_U$ 
(respectively, $\Phi_1\circ\widetilde\Psi = \phi_{S_U}^1$, 
thus
$\widetilde\Phi_U = \phi_{S_U}^1\circ\widetilde\Psi\circ\Phi_U$), as required.

Fix a small real number $r>0$ and consider the closed neighborhood $U_r\subset U$ of $\mathcal O$ having the form
$$
U_r := U\cap \{ 
H_{can}^{\tilde\lambda}(P, Q)^2 + |\tilde\lambda|^2 \le r^2,\ \psi\in T^{n-1},\ |P|\le t_0^{n+1} \}
$$
where $t_0=1$ (in fact, we will consider more general situation in Step 3 below).
The intersection of $U_r$ with the plane $\tilde\lambda=0$ is shown in Fig.~\ref {fig:A123}.
The following subsets of $\partial U_r$ will be called the (positive and negative) {\it boundaries} of $U_r$:
$$
\partial_\sigma U_r := U_r\cap \{ \eta\sigma P = t_0^{n+1},\ Q < 0 \}, \qquad \sigma\in\{\pm\},
$$
$$
\partial_\sigma' U_r := U_r\cap \{ \sigma P = t_0^{n+1},\ Q > 0 \}, \qquad \sigma\in\{\pm\}
$$
(note that $\partial_\sigma' U_r \ne \varnothing$ if and only if $n$ is odd).
Note that the Hamiltonian vector field $X_{\tilde H}$ is directed out of $U_r$ (respectively, in $U_r$) on $\partial_- U_r$ and $\partial_-' U_r$ (respectively, on $\partial_+ U_r$ and $\partial_+' U_r$).

Our purpose is to construct semi-global symplectic invariants of the singularity. For this, let us cut the regular neighborhood 
$\{H_{can}^{\tilde\lambda}(P, Q)^2 + |\tilde\lambda|^2 \le r^2\}$ of the fiber $\mathcal L$ into the following pieces:
$U_r$ and 
\begin{equation} \label {eq:Vr}
V_r := \overline{\{ \tilde H^2 + \sum_{j=1}^{n-1} \tilde J_j^2 \le r^2\} \setminus U_r}.
\end{equation}
Clearly, $U_r$ is well-defined (i.e., two different choices for such $U_r$ yield pieces that are related to each other by $\mathcal F$-preserving symplectomorphism).
We see that $V_r$ consists of $\ell$ connected components. For each connected component $V_r^i$ of $V_r$, we will construct a function germ $S_i\circ\phi^{-1}:(\R^n,0) \to (\R,0)$ 
that characterizes the piece $V_r^i$ up to $\mathcal F$-preserving symplectomorphism (the corresponding functions $S_i\circ\mathcal F$ are {\em generating functions}).

Let us proceed with constructing such generating functions. Consider two
Lagrangian submanifolds $\mathcal N_\pm\subset U$ having the form 
$$
\mathcal N_\sigma := \{\psi=0,\ \eta \sigma P=t_0^{n+1},\ Q < 0\}, \qquad \sigma=\pm1.
$$
If $n$ is odd (thus $\eta=-1$, $\ell=1-\eta=2$), we consider 
two more Lagrangian submanifolds $\mathcal N_\pm'\subset U$ having the form 
$$
\mathcal N_\sigma' := \{\psi=0,\ \sigma P=t_0^{n+1},\ Q > 0\}, \qquad 
\sigma=\pm1.
$$
Each of these Lagrangian submanifolds transversally intersects the Lagrangian submanifold $\mathcal L\setminus\mathcal O$ at a unique point (so, it is a local {\em Lagrangian section} of our singular Lagrangian foliation). Clearly, $\mathcal N_\sigma \subset \partial_\sigma U_r$ and 
$\mathcal N_\sigma' \subset \partial_\sigma' U_r$ ($\sigma\in \{\pm\}$). See Fig.~\ref {fig:A123}.

It follows from the rigidity property of the coordinates \eqref {eq:coord} (see above) that the union of the Lagrangian sections, $\mathcal N_+\cup\mathcal N_-$ for even $n$, $\mathcal N_+\cup\mathcal N_-\cup\mathcal N_+'\cup\mathcal N_-'$ for odd $n$, is rigid in the following sense: it is uniquely defined up to ``shifting'' it with the time-one map $\phi_{S_U\circ\mathcal F}^1$ of some $\mathcal F$-preserving Hamiltonian flow.

Step 2. 
Observe that $\overline{\mathcal L\setminus U_r}$ is $n$-dimensional manifold with boundary, moreover $\partial(\mathcal L\setminus U_r)$ is the union of several $T^{n-1}$-orbits (twice as many as the number $\ell$ in \eqref {eq:ell}). We have two cases.

{\em Case 1:} $\mathcal L\setminus\mathcal O$ is connected (i.e., $\ell=1$, $n$ is even, $\eta=+1$, a supercritical singularity by Definition~\ref {def:pleat}). 
Then the boundary of $\mathcal L\setminus U_r$ consists of two $T^{n-1}$-orbits:
$\partial(\mathcal L\setminus U_r) = \mathcal O_+ \cup \mathcal O_-$ where 
$\mathcal O_\sigma := \mathcal L \cap (\partial_\sigma U_r)$, and the Lagrangian submanifold $\mathcal L\setminus\mathcal O$ is a cylinder $T^{n-1}\times\R$. Hence, the closed neighborhood \eqref {eq:Vr} of $\mathcal L\setminus U$ is connected. 
One can show that we can move from $\mathcal N_-$ to $\mathcal N_+$ in $V_r$ via the time-one map of the Hamiltonian flow generated by a function $S(F_1,\dots,F_n) = S\circ\mathcal F$, for some real-analytic function germ $S:(\R^n,\mathcal F(\mathcal O))\to(\R,0)$.

Note that such a function $S$ is rigid in the following sense: it does not depend on the choice of the Lagrangian sections $\mathcal N_\pm$, due to rigidity property of these sections (see Step 1). On the other hand, $S$ is not unique, since it can be replaced with any function 
yielding the same time-one map. Thus, it can be replaced with any function of the form $S_a = S + 2\pi \sum_{j=1}^{n-1} a_j J_j$ where $a=(a_1,\dots,a_{n-1})\in \Z^{n-1}$.
Let us choose (a unique) $a\in\Z^{n-1}$ such that
$$
\frac{\partial (S_a\circ\phi^{-1})}{\partial(\widetilde J_1,\dots,\widetilde J_{n-1})} \in 
\left\{ \sum_{j=1}^{n-1} t_j
\frac{\partial 
(J_j\circ\phi^{-1})}{\partial(\widetilde J_1,\dots,\widetilde J_{n-1})} \mid 0\le t_j <1, \ 1\le j\le n-1\right\} \qquad \mbox{at the origin}
$$
(this can be done, since the matrix $(\partial_{\widetilde J_i} (J_j\circ\phi^{-1}))_{1\le i,j\le n-1}$ is non-degenerate at the origin).
Let us replace $S$ with $S_a$, then we will obtain a unique function germ $S$ such that
\begin{equation} \label {eq:S}
\frac{\partial (S\circ\phi^{-1})}{\partial(\widetilde J_1,\dots,\widetilde J_{n-1})} \in 
\left\{ \sum_{j=1}^{n-1} t_j
\frac{\partial (J_j\circ\phi^{-1})}{\partial(\widetilde J_1,\dots,\widetilde J_{n-1})} \mid 0\le t_j <1, \ 1\le j\le n-1
\right\} \qquad \mbox{at the origin}.
\end{equation}

{\em Case 2:} $\mathcal L\setminus\mathcal O$ is disconnected (i.e., $\ell=2$, $n$ is odd, $\eta=-1$, a subcritical singularity by Definition~\ref {def:pleat}). 
Then the boundary of $\mathcal L\setminus U_r$ consists of four $T^{n-1}$-orbits:
$\partial(\mathcal L\setminus U_r) = \mathcal O_+ \cup \mathcal O_- \cup \mathcal O_+' \cup \mathcal O_-'$ where 
$\mathcal O_\sigma := \mathcal L \cap (\partial_\sigma U_r)$, 
$\mathcal O_\sigma' := \mathcal L \cap (\partial_\sigma' U_r)$, $\sigma\in\{\pm\}$, 
and the Lagrangian submanifold $\mathcal L\setminus\mathcal O$ consists of two connected components (each of which is a cylinder $T^{n-1}\times\R$). Hence, the closed neighborhood $V_r$ of $\mathcal L\setminus U$ (see \eqref {eq:Vr}) is disconnected too and consists of two connected components, denoted by $V_r^1$ and $V_r^2$. Two subcases are possible.

{\em Subcase 1:}
$\partial V_r^1 = \partial_- U_r \cup \partial_+ U_r$ and 
$\partial V_r^2 = \partial_-' U_r \cup \partial_+' U_r$. 
Then define the identity permutation $\beta=\id \in \Sym_2$.
Similarly to Case 1, 
one can move from $\mathcal N_-$ to $\mathcal N_+$ in $V_r^1$ via the time-one map of the Hamiltonian flow generated by a function 
$S_1(F_1,\dots,F_n) = S_1\circ\mathcal F$.
Furthermore, one can move from $\mathcal N_-'$ to $\mathcal N_+'$ in $V_r^2$ via the time-one map of the Hamiltonian flow generated by a function
$S_2(F_1,\dots,F_n) = S_2\circ\mathcal F$.

{\em Subcase 2:}
$\partial V_r^1 = \partial_- U_r \cup \partial_+' U_r$ and 
$\partial V_r^2 = \partial_-' U_r \cup \partial_+ U_r$. 
Then define the non-identity permutation $\beta=(12) \in \Sym_2$.
Similarly to above, 
one can move from $\mathcal N_-$ to $\mathcal N_+'$ in $V_r^1$ via the time-one map of the Hamiltonian flow generated by a function 
$S_1(F_1,\dots,F_n) = S_1\circ\mathcal F$.
Furthermore, one can move from $\mathcal N_-'$ to $\mathcal N_+$ in $V_r^2$ via the time-one map of the Hamiltonian flow generated by a function 
$S_1(F_1,\dots,F_n) = S_1\circ\mathcal F$.

In each subcase, we have two generating functions $S_i(F_1,\dots,F_n)$, $i=1,2$.
As in Case 1, each function $S_i$ is rigid (up to replacing $(S_1\circ\phi^{-1},S_2\circ\phi^{-1})$ with $(S_2\circ\phi^{-1}\circ\psi, S_1\circ\phi^{-1}\circ\psi)$ when $n$ is odd and $[J\circ\phi^{-1}]=[J\circ\phi^{-1}\circ\psi]$), furthermore it is not unique, since it can be replaced with any function of the form $S_{i,a} = S_i + 2\pi \sum_{j=1}^{n-1} a_j J_j$ where $a=(a_1,\dots,a_{n-1})\in\Z^{n-1}$.
We again choose an appropriate $a=a_i$ and replace $S_i$ with $S_{i,a_i}$ satisfying \eqref {eq:S}.

Step 3.
The above construction of the generating function (resp., pair of generating functions) was performed under the assumption $t_0=1$. 
If $0<t_0<1$, we can make a similar construction; denote by $R_{t_0}$ the resulting generating function if $n$ is even (if $n$ is odd, we will do the same for every function of the pair).
Define $S := R_{t_0}-2T_{t_0}$ where $T_{t_0}$ denotes the generating function of a symplectomorphism that sends the Lagrangian submanifold 
$\mathcal N_{t_0,-} := \{\psi=0,\ -\eta P = t_0^{n+1},\ Q<0\}$ to $\mathcal N_{1,-}$. 
Then we can replace $S$ with $S_a$, $a\in\Z^{n-1}$, that satisfies \eqref {eq:S}.
One easily checks that such a function $S$ does not depend on the choice of $t_0$.

\begin{remark} \label {rem:semiglob}
For any $n\ge1$, we constructed $\ell$ generating functions $S_i\circ\mathcal F=S_i(F_1,\dots,F_n)$, $1\le i\le \ell$, satisfying \eqref {eq:S}, and a permutation $\beta\in\Sym_\ell$, where $\ell$ is given by \eqref {eq:ell}. More specifically:

(A) For any even $n\ge2$ (and $\eta=+1$), we have $\ell=1$ and constructed a unique generating function $S\circ\mathcal F=S(F_1,\dots,F_n)$ satisfying \eqref {eq:S}. Then the germ $S\circ\phi^{-1}$ is a symplectic invariant.

(B) For any odd $n\ge1$ (and $\eta=-1$), we have $\ell=2$ and two subcases, Subcase 1 and Subcase 2, which are topologically different if $n\ge3$ (and they are distinguished by the topological invariant $\beta\in\Sym_\ell$, a permutation showing a correspondence between positive and negative boundaries of $U_r$ via $V_r$). In each subcase, we constructed two generating functions $S_i\circ\mathcal F = S_i(F_1,\dots,F_n)$ satisfying \eqref {eq:S}.
If $n=1$ (and $\eta=-1$), we get a non-degenerate saddle singular point, and it is enough to consider just one subcase, say Subcase 1 (this can be achieved by replacing $\tilde H\to-\tilde H$); define the permutation $\beta$ to be the identity. If $n=1$ or $[J\circ\phi^{-1}]=[J\circ\phi^{-1}\circ\psi]$, then the pairs of functions $(S_1\circ\phi^{-1},S_2\circ\phi^{-1})$ and $(S_2\circ\phi^{-1}\circ\psi, S_1\circ\phi^{-1}\circ\psi)$ correspond to symplectically equivalent semi-global singularities, thus we can and will assume that the germ $S_1\circ\phi^{-1}$ is ``not smaller'' than $\tilde S_2\circ\phi^{-1}\circ\psi$ with respect to lexicographic order, 
where $\tilde S_2 \in \{S_2 + 2\pi\sum_{j=1}^{n-1}a_jJ_j\mid a=(a_1,\dots,a_{n-1})\in\Z^{n-1}\}$ 
satisfies \eqref {eq:S}. 
One easily shows that the mapping germ 
$S\circ\phi^{-1} := (S_1\circ\phi^{-1},S_2\circ\phi^{-1})$ 
is defined unambiguously, and it is a symplectic invariant for any odd $n\ge1$.
\end{remark}

\begin{theorem} [Symplectic classification of unfolded $A_n$ singular fibers] \label{thm:An:semiglob}
Suppose that the integral surface $\mathcal L = \{\mathcal F=\const\}$ containing an unfolded $A_n$ singular orbit $\mathcal O$ is connected and compact, and $\mathcal L\setminus\mathcal O$ consists of regular points of the integrable system $\mathcal F$.
Consider real-analytic diffeomorphism germ $\phi:(\R^n,\mathcal F(\mathcal O)) \to (\R^n,0)$ and real-analytic coordinates \eqref {eq:coord} on a neighborhood $U$ of $\mathcal O$ in which the presentation \eqref {eq:semiloc} holds and $J\circ\phi^{-1} = \ldbrack J\circ\phi^{-1}\rdbrack$ (see Corollary \ref {cor:An:semiloc:class}). Define the permutation $\beta\in\Sym_\ell$ and the mapping germ $S=(S_1,\dots,S_\ell):(\R^n,\mathcal F(\mathcal O)) \to (\R^\ell,0)$ unambiguously as in Remark \ref {rem:semiglob} (here $\ell=|\pi_0(\mathcal L\setminus\mathcal O)|$ is given by \eqref {eq:ell}, $S_i\circ \mathcal F$, $1\le i\le \ell$, are generating functions satisfying \eqref {eq:S}). Then the mapping germs
$J\circ\phi^{-1} : (\R^n,0)\to(\R^{n-1},0)$,
$S\circ\phi^{-1} : (\R^n,0)\to(\R^\ell,0)$ and the permutation $\beta\in\Sym_\ell$
classify the singularity at the fiber $\mathcal L$ up to real-analytic 
symplectic equivalence. In other words, two integrable systems $(M^i,\omega^i,\mathcal F^i)$ are symplectically equivalent near their unfolded $A_n^\eta$ singular fibers $\mathcal L^i$ ($i=1,2$) if and only if $J^1\circ(\phi^1)^{-1} = J^2\circ(\phi^2)^{-1}$, $S^1\circ(\phi^1)^{-1} = S^2\circ(\phi^2)^{-1}$ and $\beta^1=\beta^2$.
\end{theorem}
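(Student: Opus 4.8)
The plan is to reduce the semi-global classification to the gluing data that has already been isolated in Steps~1--3, and to verify that two systems with the same invariants can be glued to a common symplectic model. The ``only if'' direction is the easier one: if $\Phi\colon U^1\to U^2$ is an $\mathcal F$-preserving symplectomorphism between saturated neighborhoods of $\mathcal L^1$ and $\mathcal L^2$, then on a neighborhood of the singular orbit $\mathcal O^1$ it restricts to a semi-local symplectic equivalence, so by Corollary~\ref{cor:An:semiloc:class} (and the rigidity in Theorem~\ref{thm:An:semiloc}) the special germs $\ldbrack J^1\circ(\phi^1)^{-1}\rdbrack$ and $\ldbrack J^2\circ(\phi^2)^{-1}\rdbrack$ agree, i.e.\ $J^1\circ(\phi^1)^{-1}=J^2\circ(\phi^2)^{-1}$ after adjusting $\phi^i$. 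Because $\Phi$ is a fiberwise homeomorphism, it induces a bijection between the connected components of $\mathcal L^1\setminus\mathcal O^1$ and those of $\mathcal L^2\setminus\mathcal O^2$, and it matches the positive/negative boundary pieces $\partial_\pm U_r$, $\partial_\pm' U_r$ with their counterparts (using that $X_{\tilde H}$ points inward on the ``$+$'' pieces and outward on the ``$-$'' pieces); this forces $\beta^1=\beta^2$. Finally, using the rigidity of the coordinates~\eqref{eq:coord} and of the Lagrangian sections $\mathcal N_\pm,\mathcal N_\pm'$ established in Step~1, the generating functions transform correctly, so that after normalizing by~\eqref{eq:S} one gets $S^1\circ(\phi^1)^{-1}=S^2\circ(\phi^2)^{-1}$.

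For the ``if'' direction I would build the symplectomorphism by hand from the pieces. Assume $J^1\circ(\phi^1)^{-1}=J^2\circ(\phi^2)^{-1}=:J\circ\phi^{-1}$, $S^1\circ(\phi^1)^{-1}=S^2\circ(\phi^2)^{-1}$ and $\beta^1=\beta^2$. By Theorem~\ref{thm:An:semiloc} both systems admit coordinates~\eqref{eq:coord} on neighborhoods $U^i$ of $\mathcal O^i$ in which~\eqref{eq:semiloc} holds and $J^i\circ(\phi^i)^{-1}=\ldbrack J^i\circ(\phi^i)^{-1}\rdbrack$; since these agree, the composition $\Phi_0:=\Phi_{U^2}^{-1}\circ\Phi_{U^1}$ is an $\mathcal F_{can}$-preserving symplectomorphism of a neighborhood of $\mathcal O^1$ onto a neighborhood of $\mathcal O^2$ (up to the involution $\widetilde\Psi$ when $n$ is odd and $[J\circ\phi^{-1}]=[J\circ\phi^{-1}\circ\psi]$, which we absorb by replacing $\phi^2$ with $\psi\circ\phi^2$ as in Remark~\ref{rem:semiglob}). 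It then remains to extend $\Phi_0$ over the complementary pieces $V_r^i$. Cut the regular saturated neighborhood of $\mathcal L^i$ into $U_r^i$ and the $\ell$ components $V_r^{i,k}$ as in~\eqref{eq:Vr}; the permutation $\beta^i$ records which boundary $T^{n-1}$-orbit of $U_r^i$ is joined to which across $V_r^{i,k}$, and $\beta^1=\beta^2$ means this combinatorial pattern is the same. On each $V_r^{i,k}$ the foliation is regular, so by Liouville--Arnol'd it is symplectomorphic, fiberwise, to the standard model $D^{n+1}\times T^{n-1}\to D^{n+1}$ with action coordinates $(J\circ\phi^{-1},\,\text{one extra action})$; the single extra ingredient is the ``shift'' needed to pass from the Lagrangian section $\mathcal N_-$ (or $\mathcal N_-'$) to $\mathcal N_+$ (or $\mathcal N_+'$), and this shift is exactly the time-one Hamiltonian flow of $S_k^i\circ\mathcal F^i$. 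Since $S^1\circ(\phi^1)^{-1}=S^2\circ(\phi^2)^{-1}$, these shifts agree, so $\Phi_0$ extends over each $V_r^{i,k}$ compatibly with the already-chosen identification of the boundaries. The normalization~\eqref{eq:S} guarantees the extension is unambiguous (it pins down the ambiguity $S_k\mapsto S_k+2\pi\sum a_jJ_j$ coming from rechoosing basic cycles). Patching $\Phi_0$ on $U_r$ with these extensions on the $V_r^{i,k}$ yields a global real-analytic $\mathcal F$-preserving symplectomorphism $\mathcal L^1$-nbhd $\to\mathcal L^2$-nbhd, hence the two systems are symplectically equivalent near $\mathcal L^1$ and $\mathcal L^2$.

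The main obstacle I anticipate is the gluing compatibility on the overlaps $U_r\cap V_r$: one must check that the symplectomorphism produced on $V_r^{i,k}$ from the generating-function data genuinely agrees, not just up to an isotopy, with $\Phi_0$ along the common boundary $T^{n-1}$-orbits, and that the real-analytic (rather than merely smooth) regularity is preserved through this patching. This is where the rigidity statements of Step~1 (the uniqueness of the coordinates~\eqref{eq:coord} and of the Lagrangian sections up to an $\mathcal F$-Hamiltonian time-one shift) do the real work: they show that any residual freedom in choosing $\Phi_0$ near $\mathcal O$ and any residual freedom in the section $\mathcal N_\pm$ are both absorbed by a single $\mathcal F$-Hamiltonian flow, so the generating function $S$ is the complete obstruction. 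A secondary technical point, when $n$ is odd, is bookkeeping the discrete symmetry: one must track the $\psi$-involution and the possible swap of the pair $(S_1,S_2)$ consistently across the semi-local and semi-global stages, which is precisely why Remark~\ref{rem:semiglob} fixes the unordered-to-ordered normalization before the theorem is stated. I would handle both points by working throughout on a small complexification, so that ``real-analytic'' regularity is automatic once the holomorphic extensions are matched on the (connected, hence rigid) overlap.
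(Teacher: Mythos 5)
Your proposal follows essentially the same route as the paper: both directions rest on cutting a saturated neighborhood of $\mathcal L$ into the singular piece $U_r$ and the $\ell$ regular pieces $V_r^k$, observing that $U_r$ with its symplectic structure is pinned down by the semi-local invariant $\ldbrack J\circ\phi^{-1}\rdbrack$ (via the rigidity of the coordinates \eqref{eq:coord} and of the Lagrangian sections from Step~1), that each $V_r^k$ together with the gluing map across $\partial_\pm U_r$, $\partial_\pm' U_r$ is pinned down by $\beta$ and the generating function $S_k\circ\phi^{-1}$ normalized by \eqref{eq:S}, and then reassembling. The only structural difference is that the paper phrases the ``if'' direction as ``both systems are symplectomorphic to one common model glued from these pieces'' (with the explicit gluing formula \eqref{eq:glue}) rather than extending $\Phi_0$ directly from $U_r$ over the $V_r^{i,k}$; that difference is cosmetic.

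The one place where your argument is not yet a proof is exactly the obstacle you flag at the end: the regularity and well-definedness of the patching across the hypersurfaces $\partial_\pm U_r$, $\partial_\pm' U_r$. The paper closes this by invoking Lazutkin's gluing lemma \cite{Lazutkin1993}: since $X_{\tilde H}$ is transversal to the glued boundary and the gluing preserves the momentum map and the restriction of $\omega$ to the boundary, the glued object is a smooth (indeed real-analytic) symplectic manifold, and the identity maps on the pieces induce a symplectomorphism between any two such gluings --- this uniqueness statement is what turns ``the pieces and the gluing data coincide'' into ``the neighborhoods are symplectomorphic''. Your proposed substitute, working on a small complexification so that analyticity is automatic once holomorphic extensions are matched on the overlap, does not by itself deliver this: matching extensions along a common boundary hypersurface of two manifolds-with-boundary does not automatically endow the union with an analytic symplectic structure, nor does it give the uniqueness of the result of gluing. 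With the reference to \cite{Lazutkin1993} (or an equivalent flow-box argument) inserted at that point, your proof coincides with the paper's.
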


\begin{proof}
Step 4. Suppose that $\Phi$ and $\phi$ are chosen as in \eqref {eq:coord}.
By Corollary \ref {cor:An:semiloc:class}, the function germ $J\circ\phi^{-1} = \ldbrack J\circ\phi^{-1}\rdbrack$ is a symplectic invariant of the singular orbit $\mathcal O$.
As we showed in Remark~\ref {rem:semiglob}, the permutation $\beta\in\Sym_\ell$ and the function germ $S\circ\phi^{-1} = (S_1,\dots,S_\ell)\circ\phi^{-1}$ are symplectic invariants of the singular fiber $\mathcal L$.

To show that the function germs $\ldbrack J\circ\phi^{-1}\rdbrack$, $S\circ\phi^{-1}$ and the permutation $\beta$ are the only invariants, and they admit arbitrary values, we now construct a `model' real-analytic integrable system with an unfolded $A_n$ singular fiber from such a pair of germs and a permutation. Consider Cases 1 and 2 as in Step 2.

{\em Case 1:} $\mathcal L\setminus\mathcal O$ is connected (i.e., $\ell=1$), thus $n$ is even. 
Introduce canonical coordinates 
$$
\Phi_V = (\tilde h,\tilde\tau, \tilde \lambda_1,\dots,\tilde \lambda_{n-1},\tilde\psi_1,\dots,\tilde\psi_{n-1}) : V \to D^2 \times D^{n-1}\times T^{n-1}
$$
in which 
$(\tilde H,\tilde J_1,\dots,\tilde J_{n-1}) = (\tilde h, \tilde \lambda_1,\dots,\tilde \lambda_{n-1})$ (so, our Lagrangian foliation on $V_r$ becomes standard) and the ``negative'' Lagrangian section $\mathcal N_-$ is given by $\tilde\tau=\tilde\psi_1=\dots=\tilde\psi_{n-1}=0$. 
Observe that the closed tubular neighborhood 
$\{ \tilde H^2 + \sum_{j=1}^{n-1} \tilde J_j^2 \le r^2\}$ of $\mathcal L$ can be obtained by gluing two pieces, $\Phi_U(U_r)$ and
$\Phi_V(V_r)$, along the positive and negative boundaries $\Phi_U(\partial_\pm U_r)$ via the gluing mapping $\Phi_V\circ \Phi_U^{-1}|_{\Phi_U(\partial_\pm U_r)}$ that preserves $\mathcal F$. 

By construction of $\Phi_U$ and $U_r$ (see Step 1), the piece $\Phi_U(U_r)$ and the symplectic structure on it are uniquely determined by the mapping germ $J\circ\phi^{-1}$.
We will now show that the piece $\Phi_V(V_r)$ and the gluing mapping are uniquely determined by the generating function $S\circ\phi^{-1}$. For this, we observe that $\Phi_V(V_r)$ is given by $\tilde h^2 + |\tilde\lambda|^2 \le r^2$ and
$0\le \tilde \tau \le \partial_{\tilde H}(S\circ\phi^{-1})(\tilde h,\tilde\lambda)$, 
and each point 
$$
\Phi_U (\tilde \lambda, \psi, \sigma \eta t_0^{n+1}, Q) \in \Phi_U(\partial_\sigma U_r) \subset
D^{n-1}\times T^{n-1}\times D^2, \quad 
H_{can}^{\tilde\lambda}(\sigma \eta t_0^{n+1}, Q)^2 + |\tilde\lambda|^2 < r^2,\ Q<0,
$$
$\sigma\in\{\pm1\}$, is glued with the corresponding point 
$(\tilde h,\tilde\tau,\tilde \lambda,\tilde\psi) \in 
\Phi_V(\partial_\sigma U_r)$, where 
\begin{equation} \label {eq:glue}
\tilde h = H_{can}^{\tilde\lambda}(\sigma \eta t_0^{n+1},Q), \quad
\begin{array}{lll}
\tilde\tau=0, & \tilde\psi_j=\psi_j & \mbox{if } \sigma=-1, \\
\tilde\tau = \partial_{\tilde H}(S\circ\phi^{-1})(\tilde h,\tilde \lambda), & 
\tilde\psi_j=\psi_j + \partial_{\tilde J_j}(S\circ\phi^{-1})(\tilde h,\tilde \lambda) & \mbox {if } \sigma=+1.
\end{array}
\end{equation}

{\em Case 2:} $\mathcal L\setminus\mathcal O$ is disconnected, thus $n$ is odd 
(and $\eta=-1$), $\ell=2$.
Similarly to Case 1, 
the closed tubular neighborhood $\{\tilde H^2 + \sum_{j=1}^{n-1}\tilde J_j^2 \le r^2\}$ of $\mathcal L$ can be obtained by gluing 
$\Phi_U(U_r)$ with 
$\Phi_V(V_r^1)$ and $\Phi_V(V_r^2)$ along the positive and negative boundaries $\Phi_U(\partial_\pm U_r)$, $\Phi_U(\partial_\pm' U_r)$ 
via the gluing map $\Phi_V\circ \Phi_U^{-1}|_{\Phi_U(\partial_\pm U_r) \cup \Phi_U(\partial_\pm' U_r)}$. One shows similarly to Case 1 that the piece $\Phi_U(U_r)$ and the symplectic structure on it are uniquely determined by the mapping germ $J\circ\phi^{-1}$, while 
the pieces $\Phi_V(V_r^i)$ and the gluing mapping are uniquely determined by the permutation $\beta\in\Sym_2$ (distinguishing Subcases 1 and 2) and by the generating functions $S_i\circ\phi^{-1}$, $i=1,2$.

Step 5.
Define the phase space of the desired integrable system as a topological manifold, which is glued from two symplectic manifolds with boundaries, $U_r$ and $V_r$, along their boundaries via the gluing map \eqref {eq:glue}. Since the Hamiltonian vectorfield $X_{\tilde H}$ is transversal to the glued boundary, moreover the gluing preserves the momentum map and the restrictions of the symplectic 2-from $\omega$ to the boundaries, it follows from \cite{Lazutkin1993} that the result of such a gluing is smooth (moreover, it is real-analytic in the real-analytic case). 
Moreover, the result of such a gluing is unique in the following sense: if pieces $\widetilde U_r, \widetilde V_r$ are copies of $U_r, V_r$ (respectively), then the identity maps $U_r\to\widetilde U_r$, $V_r\to\widetilde V_r$ induce a symplectomorphism between the results of gluing \cite{Lazutkin1993}. Therefore, our model system is well-defined, and every real-analytic unfolded $A_n$ semi-global singularity is 
symplectically equivalent to such a model.
\end{proof}

\appendix
\section {Proof of Theorem \ref {thm:An:loc}} \label {app:loc}

In this section, we give a proof of Theorem \ref {thm:An:loc} about a (left-right) local normal form. More specifically, there is the following Eliasson--Vey-type result.

\begin{theorem}[Symplectic normal form for unfolded $A_n$ singular points] \label{thm:An:loc:} Let $P$ be an unfolded $A_n$ singular point of a real-analytic integrable system $\mathcal F  = (F_1,\dots, F_n)\colon\, M \to \mathbb R^n$, $n\ge1$. Then there exist real-analytic coordinates $\Phi = (\tilde \lambda, \tilde \mu,\tilde x,\tilde y)$ centered at $P$ such that
$$
\tilde H = \eta \tilde{p}^2 + \tilde{q}^{n+1} + \sum_{j=1}^{n-1} \tilde{\lambda}_j \tilde{q}^j \quad \mbox{ and } \quad \tilde{J}_j = \tilde \lambda_j, \qquad 1\le j\le {n-1}, \qquad \eta=\pm1,
$$
are real-analytic functions of $(F_1,\dots,F_n)$ and the symplectic structure has the canonical form
$$
(\Phi^{-1})^*\omega = \sum_{j=1}^{n-1} \textup{d} \tilde{\lambda}_j \wedge \textup{d} \tilde \mu_j + \textup{d} \tilde{p} \wedge \textup{d} \tilde{q} .
$$
The functions $\tilde H$ and $\tilde{J}_1,\dots,\tilde{J}_{n-1}$ are rigid in the following sense: they are uniquely defined up to simultaneously changing the sign of the functions $\tilde J_{2i-1}$, $i=1,\dots,m$, when $n=2m+1\ge3$ is odd, or up to changing the sign of $\tilde H$ if $n=1$ and $\eta=-1$.
\end{theorem}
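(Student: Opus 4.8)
The plan is to split the statement into an \emph{existence} part and a \emph{rigidity} part, reducing both to the two-variable reduced picture prepared in \eqref{eq:Morse:type}--\eqref{eq:Morse:type:}.

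\emph{Existence.} Start from the preliminary normal form \eqref{eq:Morse:type:}: a real-analytic (generally non-canonical) change $\widehat\Phi$ and a target reparametrization $\widehat\phi$ give $\widehat\phi\circ\mathcal F\circ\widehat\Phi^{-1}=\mathcal F_{can}=(H^\lambda_{can}(p,q),\lambda)$ and $(\widehat\Phi^{-1})^*\omega=\omega_{std}+\textup{d}\beta$, where $\omega_{std}=\sum_j\textup{d}\lambda_j\wedge\textup{d}\mu_j+\textup{d}p\wedge\textup{d}q$ and $\beta:=\alpha-p\,\textup{d}q$ is a real-analytic $1$-form on the $(n+1)$-dimensional $(\lambda,p,q)$-space, which we may take to vanish at the origin and for which $\omega':=\omega_{std}+\textup{d}\beta$ is symplectic near the origin (using $g(0)>0$). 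So $\omega_{std}$ and $\omega'$ are cohomologous symplectic germs for which the components of $\mathcal F_{can}$ Poisson-commute, and it remains to produce a symplectomorphism germ $\Psi$ with $\Psi^*\omega_{std}=\omega'$ that maps the fibres of $\mathcal F_{can}$ to fibres, $\mathcal F_{can}\circ\Psi=\widetilde\phi\circ\mathcal F_{can}$; then $\Phi:=\Psi\circ\widehat\Phi$ and $\phi:=\widetilde\phi\circ\widehat\phi$ are the desired coordinates and target diffeomorphism, with $(\tilde H,\tilde J)=\phi\circ\mathcal F$ real-analytic in $(F_1,\dots,F_n)$. I would get $\Psi$ by the path method: put $\omega_t=\omega_{std}+t\,\textup{d}\beta$ and solve $\iota_{V_t}\omega_t=-\beta$, whose time-one flow conjugates $\omega_{std}$ and $\omega'$; the requirement that this flow be fibre-preserving becomes, after modifying the primitive $\beta$ by a suitable closed $1$-form and reparametrizing the deformation parameters $\lambda$, a division statement for the functions $\textup{d}F_i(V_t)$ modulo functions of $\mathcal F_{can}$. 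Since $\mathcal F_{can}$ corresponds to the generalized Whitney map of the $A_n$ singularity $\eta p^2+q^{n+1}$ of Milnor number $n$, this is precisely the relative de Rham lemma for that isolated hypersurface singularity --- which is also why exactly $n-1$ essential deformation parameters survive once the value of $\tilde H$ is fixed; equivalently, the whole normalization is an instance of the isochore versal deformation theorem of Garay \cite{Garay2004}, going back to Varchenko--Givental \cite{VarchenkoGivental1982} (see also \cite{ColindeVerdiere2003}, and \cite{KudryavtsevaMartynchuk2023} for $n=2$), and in the real-analytic category all changes of variables are real-analytic. The sign normalization of $\eta$ for even $n$ (and for $\eta=+1$ when $n=1$) is the elementary change $H\mapsto-H$, $y\mapsto-y$ from Definition~\ref{def:pleat}. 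For $n=1$ the statement is the classical symplectic Morse lemma of Vey (see also \cite{DeVerdiere1979}), the only ambiguity being $\tilde H\mapsto-\tilde H$ in the hyperbolic case, reflecting the $p\leftrightarrow q$ symmetry of the saddle.

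\emph{Rigidity.} Let $(\widetilde H,\widetilde J_1,\dots,\widetilde J_{n-1})=\phi'\circ\mathcal F$ be a second normal form realized by a symplectomorphism $\Phi'$ (necessarily with the same $\eta$, a discrete invariant). Then $\Psi:=\Phi'\circ\Phi^{-1}$ is a germ of symplectomorphism of the standard model with $\mathcal F_{can}\circ\Psi=\psi_0\circ\mathcal F_{can}$, where $\psi_0:=\phi'\circ\phi^{-1}$, and it suffices to show that $\psi_0$ is the identity or the involution $\psi$ of \eqref{eq:psi}. Since $\Psi$ preserves the Liouville foliation of $\mathcal F_{can}$, it permutes the canonical family of vanishing cycles \eqref{eq:vanish} by an automorphism $\sigma$ of their $A_n$ intersection configuration; hence $\sigma$ is the identity or the index-reversal $k\mapsto n+1-k$, and $\sigma$ must moreover preserve the ``real''/``imaginary'' pattern of the octant $\Delta$ in \eqref{eq:real}--\eqref{eq:I:}, which (a parity check) excludes the reversal for even $n$. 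As $\Psi$ is symplectic, it preserves the periods of a primitive of $\omega_{std}$ over cycles, hence $I\circ\psi_0=\sigma\cdot I$ on $D$. By injectivity of $I$ on the intersection of $D$ with a small neighbourhood of $\mathcal F(P)$ (Proposition~\ref{pro:inject}) this pins down $\psi_0$ on $D$, hence on a full neighbourhood of the origin by analytic continuation: $\psi_0=\mathrm{id}$ in the first case, and $\psi_0=\phi^{-1}\circ\psi\circ\phi$ in the reversal case --- the latter identification being exactly the equivariance $\chi\circ I=I\circ(\phi^{-1}\circ\psi\circ\phi)$ of Proposition~\ref{pro:equivar}. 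Read in the coordinates $\phi=(\widetilde h,\tilde\lambda)$, this is precisely the asserted ambiguity: none for even $n$, and ``simultaneously change the sign of the $\widetilde J_{2i-1}$'' for $n=2m+1\ge3$.

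\emph{Where the difficulty lies.} The main obstacle is the isochore/symplectic normalization of $\textup{d}\beta$ in the existence step: running the Moser homotopy while keeping $V_t$ tangent to the \emph{singular} fibres of $H^\lambda_{can}$, and organizing the parameter dependence so that the function and the $2$-form are straightened simultaneously; in the analytic category this is the substance of \cite{VarchenkoGivental1982, Garay2004}. The injectivity of the local action (period) map used in the rigidity step is the other genuinely nontrivial input, and it is isolated as Proposition~\ref{pro:inject} and established separately below.
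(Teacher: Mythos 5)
Your proposal is correct and follows essentially the same route as the paper: existence is obtained from the preliminary normal form \eqref{eq:Morse:type:} by invoking the Varchenko--Givental/Garay isochore versal-deformation machinery, and rigidity by tracking how the symplectomorphism permutes the vanishing cycles, the ``real''/``imaginary'' parity check that excludes reversal for even $n$, the injectivity of the period map (Proposition~\ref{pro:inject}) and analytic continuation, exactly as in Steps 5--6 of the paper's proof. The one substantive step you gloss over is the verification that the period mapping of $\alpha$ is infinitesimally non-degenerate --- the non-vanishing of the limit determinant of hyperelliptic periods in Step 2 of the paper's proof, cited there from \cite{Zoladek2006} --- which is precisely the hypothesis needed for the stability/versality theorem of \cite{VarchenkoGivental1982,Garay2004} to apply, rather than a formal consequence of the unfolding having Milnor number $n$.
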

 
\begin{proof}
The first assertion of the theorem follows from \cite[Theorem 3]{VarchenkoGivental1982}; see also \cite[Theorem 6]{ColindeVerdiere2003} and \cite{Garay2004}. We give a more detailed proof for any $n\ge1$ below.
Note that an unfolded $A_n$ singularity is infinitesimally non-degenerate (see \cite[Theorem 5.25]{Zoladek2006} for a proof).

Step 1. 
By \eqref {eq:Phi1}--\eqref {eq:Morse:type:}, we can assume that 
$$
\mathcal F=(H,J_1,\dots,J_{n-1})=(\eta p^2+q^{n+1}+\sum_{j=1}^{n-1} \lambda_j q^j,\lambda_1,\dots,\lambda_{n-1}), \qquad 
\omega= \sum_{j=1}^{n-1} \textup{d}\lambda_j\wedge\textup{d}\mu_j + \textup{d}\alpha,
$$
where $\alpha$ is the germ at $0$ of a real-analytic 1-form in the $(n+1)$-space $(p,q,\lambda)$ such that $\textup{d}\alpha|_{\lambda=\const} = \tilde g(p,q,\lambda) \textup{d}p\wedge\textup{d}q$, $\tilde g(0,0,0)>0$.
Let $I_k(h,\lambda)$, $1\le k\le n$, be the (local, complex-valued) action variables corresponding to some basic cycles $\gamma_{h,\lambda,k}$, $1\le k\le n$, of the first homology group of the complex leaf 
$$
L_{h,\lambda}=\{(p,q)\in\mathbb C^2 \mid H(p,q,\lambda)=h\}, 
\quad (h,\lambda)\in\mathbb C^n\setminus\Sigma^{\mathbb C}
$$
(which is a genus-$[\frac{n}{2}]$ surface with $\frac{3-(-1)^n}2$ punctures), where
$$
\Sigma^{\mathbb C} := \{(h,\lambda)\in\mathbb C^n \mid  
\mbox{the polynomial } q^{n+1} + \sum_{j=1}^{n-1} \lambda_j q^j - h \mbox{ has a multiple root}\}.
$$
Clearly, we can assume that the cycles $\gamma_{h,\lambda,k}$, $1\le k\le n$, lie on $\{\mu=0\}$, thus 
$$
(h,\lambda)\mapsto (I_1(h,\lambda),\dots,I_n(h,\lambda)) \in \CC^n, \quad (h,\lambda)\in\mathbb C^n\setminus\Sigma^{\mathbb C},
$$
is the {\em period mapping} of the 1-form $\alpha$ (see \cite[Sec.~1]{VarchenkoGivental1982}), where we regard $(I_1(h,\lambda),\dots,I_n(h,\lambda))$ as an element of $H^1(L_{h,\lambda}, \CC) \cong \CC^n$.

Step 2.
Let us show that the period mapping is {\em infinitesimally non-degenerate} in the sense of \cite[Sec.~1]{VarchenkoGivental1982}, i.e., 
\begin{equation} \label {eq:lim}
\lim_{h\to0} \det\frac{\partial (I_1,\dots,I_n)}{\partial(h,\lambda)}|_{\lambda=0} \ne0
\end{equation}
(where the Jacobi matrix is written in a locally constant basis). For this, we observe that 
$$
I_k(h,\lambda) 
= \frac1{\pi} \int_{q_{k}(h,\lambda)}^{q_{k+1}(h,\lambda)} 
\tilde g(p(q,h,\lambda),q,\lambda) p(q,h,\lambda) \textup{d}q, \quad 1\le k\le n, \quad 
p(q,h,\lambda) := \sqrt{\eta(h-q^{n+1}-\sum_{j=1}^{n-1}\lambda_jq^j)},
$$
where $q_k(h,\lambda)$, $1\le k\le n+1$, are roots of the polynomial $q^{n+1}+\sum_{j=1}^{n-1}\lambda_jq^j - h$, $(h,\lambda)\in\mathbb C^n\setminus\Sigma^{\mathbb C}$.
Therefore, the Jacobi matrix in \eqref {eq:lim} has the following entries:
\begin{align*}
\frac{\partial I_k}{\partial h}(h,0) = \frac {\eta } {2\pi} \int_{q_{k}(h,0)}^{q_{k+1}(h,0)} 
\frac{\tilde g(\sqrt{\eta(h-q^{n+1})},q,0) \textup{d}q} {\sqrt{\eta(h-q^{n+1})}}  + \dots
= \frac {\eta \tilde g(0,0,0)} {2\pi} \int_{q_{k}(h,0)}^{q_{k+1}(h,0)} 
\frac{\textup{d}q} {\sqrt{\eta(h-q^{n+1})}}  + \dots, 
\\
\frac{\partial I_k}{\partial \lambda_j}(h,0) = - \frac {\eta} {2\pi} \int_{q_{k}(h,0)}^{q_{k+1}(h,0)} 
\frac{\tilde g(\sqrt{\eta(h-q^{n+1})},q,0) q^j \textup{d}q} {\sqrt{\eta(h-q^{n+1})}}  + \dots
= - \frac {\eta\tilde g(0,0,0)} {2\pi} \int_{q_{k}(h,0)}^{q_{k+1}(h,0)} 
\frac{q^j \textup{d}q} {\sqrt{\eta(h-q^{n+1})}}  + \dots,
\end{align*}
$1\le k\le n$, $1\le j\le n-1$, where 
the unwritten terms are of order $o(h^{\frac{1}{n+1}-\frac12})$ and $o(h^{\frac{j+1}{n+1}-\frac12})$, respectively. Hence, the determinant of the Jacobi matrix has the form 
$-(-\eta \frac {\tilde g(0,0,0)} {2\pi})^n D + o(1)$ as $h\to0$, where
$D = \det (\int_{q_{k+1}(1,0)}^{q_{k+2}(1,0)} 
\frac{q^j \textup{d}q} {\sqrt{\eta(1-q^{n+1})}})_{0\le k,j<n}$.
It is known that $D\ne0$ (see \cite[Theorem 5.25]{Zoladek2006} for a proof, see also \cite[below Theorem 8]{Varchenko1987}, \cite[Examples 37.2.A]{Arnold2012} for universal unfoldings of quasi-homogeneous singularities). 
Therefore, the limit in \eqref {eq:lim} equals $-(-\eta \frac {\tilde g(0,0,0)} {4\pi})^n D \ne 0$, since $\tilde g(0,0,0)>0$ (the latter inequality holds in our case, since 
$\omega=\sum_{j=1}^{n-1}\textup{d}\lambda_j\wedge\textup{d}\mu_j + \textup{d}\alpha$ is non-degenerate).

Step 3. 
Due to \cite[Theorem 3]{VarchenkoGivental1982} (see also \cite[Theorem 6]{ColindeVerdiere2003} and \cite{Garay2004}), any infinitesimally non-degenerate period mapping is stable, i.e., 
the period mappings of any two 1-forms close to $\alpha$ are equivalent in the following sense: there exists a complex-analytic diffeomorphism germ $\phi^\CC \colon (\mathbb C^n,0) \to (\mathbb C^n,0)$ close to the identity that preserves $\Sigma^{\mathbb C}$ and respects the period mappings\footnote{see~\cite[Sec.~2]{VarchenkoGivental1982} for more precise definitions of equivalence and stability for period mappings.}. 
This implies that the period mappings of the 1-forms $\alpha$ and 
$$
\tilde\alpha=p \textup{d}q
$$
are equivalent in the above sense.
Indeed: consider the 1-parameter family of 1-forms $\alpha_t=(1-t)\alpha+t\tilde\alpha$, $0\le t\le1$. Since their period mappings $I_t=(I_{1,t},\dots,I_{n,t})$ are stable (see above), there exists a 1-parameter family of complex-analytic diffeomorphism germs $\phi_t^\CC:(\CC^n,0)\to(\CC^n,0)$ such that 
\begin{equation*} 
\phi_0^\CC = \mathrm{id}, \qquad 
(I_{1,t},\dots,I_{n,t}) \circ\phi_t^\CC= (I_1,\dots,I_n), \quad 0\le t\le 1.
\end{equation*}
In particular, $\phi^\CC=\phi_1^\CC$ transforms the period mapping of $\tilde\alpha$ to that of $\alpha$, i.e.,
\begin{equation} \label {eq:phi}
(\tilde I_1,\dots,\tilde I_n) \circ \phi^\CC = (I_1,\dots,I_n).
\end{equation}
Here $\tilde I = (\tilde I_1,\dots,\tilde I_n) = (I_{1,t},\dots,I_{n,t})|_{t=1}$ denotes the period mapping of $\tilde\alpha$.

Step 4.
But infinitesimally non-degenerate period mappings are {\em non-dege\-ne\-rate} \cite[Sec.~1]{VarchenkoGivental1982}, i.e.,
\begin{equation} \label {eq:det}
\det\frac{\partial(I_1,\dots,I_n)}{\partial(h,\lambda)}\ne0
\end{equation} 
at each point $(h,\lambda)\in\mathbb C^n\setminus\Sigma^{\mathbb C}$ sufficiently close to the origin (see \cite[Theorem~1]{VarchenkoGivental1982}).
Recall that, on the swallowtail domain 
$$
D_{r} := 
\{(h,\lambda)\in\mathbb R^n \mid \lambda_{n-1} > - r, \
\exists q_1 < q_2 < \dots < q_{n+1} \ \forall q \in\R \ \prod_{i=1}^{n+1}(q-q_i) = q^{n+1} + \sum_{j=1}^{n-1} \lambda_j q^j - h\}
$$
(see \eqref {eq:swallow}), the period mapping of any real-analytic 1-form $\alpha$ with $\textup{d}\alpha|_{\{\lambda=0\}}/(\textup{d}p \wedge\textup{d}q)>0$ has the form
\begin{equation} \label {eq:P}
P := (I_1,\dots,I_n)|_{D_{r}}\colon D_{r}\to \Delta \approx \R^n_{>0},
\end{equation}
see \eqref {eq:I:}.
It follows from \eqref {eq:swallow} and \eqref {eq:det} that the mapping \eqref {eq:P} is an immersion if $r>0$ is small. 
This and~\eqref{eq:phi} imply that $\phi^\CC$ is real-analytic.

Thus, we have a real-analytic diffeomorphism germ $\phi:(\R^n,0)\to(\R^n,0)$ preserving the swallowtail domain $D_r$, 
such that $\phi|_{D_{r}}$ respects the action variables $I_k|_{D_{r}}$ and $\tilde I_k|_{D_{r}}$ 
corresponding to the vanishing cycles $\gamma_{h,\lambda,k}$, $1\le k\le n$, of the integrable system germs (at the origin) $(\mathbb R^{2n},\omega,\mathcal F)$ and $(\mathbb R^{2n},\tilde\omega,\mathcal F)$, where 
$$
\tilde\omega
= \sum_{j=1}^{n-1} \textup{d}\lambda_j\wedge\textup{d}\mu_j + \textup{d}\tilde\alpha
= \sum_{j=1}^{n-1} \textup{d}\lambda_j\wedge\textup{d}\mu_j + \textup{d} p \wedge\textup{d}q.
$$
This implies that the diffeomorphism germ $\phi:(\R^n,0)\to(\R^n,0)$ can be lifted to a real-analytic foliation-preserving symplectomorphism germ $\Phi \colon (\mathbb R^{2n},0) \to (\mathbb R^{2n},0)$ for these two systems (by the same arguments as in \cite[Proposition~5.3]{Bolsinov2018} where $2\pi$-periodicity of the Hamiltonian flow generated by $\lambda_j$ is in fact inessential, $1\le j\le n-1$). 
This means that $\mathcal F\circ\Phi = \phi\circ\mathcal F$ and 
$(\Phi^{-1})^* \omega = \tilde\omega$.
It is left to put 
$(\tilde\lambda,\tilde\mu,\tilde p,\tilde q):=(\lambda,\mu,p,q)\circ\Phi$ and
$(\tilde H,\tilde J_1,\dots,\tilde J_{n-1}) := \phi\circ\mathcal F$.

Step 5.
For proving that the functions $\tilde H$ and $\tilde J_1,\dots,\tilde J_{n-1}$ are uniquely defined up to some involution, let us suppose that $\alpha=\tilde\alpha$ and show that $\phi$ is either the identity or the corresponding involution. 
We will prove this under the assumption $n+\eta>0$, which means that there exists at least one ``real'' vanishing cycle $\gamma_{h,\lambda,k}$ for $(h,\lambda) \in D_r$ (the remaining case 
$n=1$ and $\eta=-1$ is well-known and can be treated separately).

Since $\Phi$ is real, it sends every ``real'' vanishing cycle $\gamma_{h,\lambda,k}$ of $L_{h,\lambda}$ (see \eqref {eq:real}), $(h,\lambda) \in D_r$, to a ``real'' vanishing cycle of $L_{\phi(h,\lambda)}$, for any sufficiently small $r>0$. Let us show that the complexified symplectomorphism germ $\Phi^{\mathbb C}$ sends ``imaginary'' vanishing cycles (see \eqref {eq:real}) to ``imaginary'' ones. The swallowtail domain 
$$
D := 
\{(h,\lambda)\in\mathbb R^n \mid \exists q_1 < q_2 < \dots < q_{n+1} \ \forall q\in\R \ 
\prod_{i=1}^{n+1}(q-q_i) = q^{n+1} + \sum_{j=1}^{n-1} \lambda_j q^j - h \}
$$
is characterized by the following property: the number of ``real'' vanishing cycles is maximal possible (and positive, since $n+\eta>0$) if and only if $(h,\lambda) \in D$.
Hence, the diffeomorphism germ $\phi$ preserves the swallowtail domain $D$. 
This implies that $\Phi^{\mathbb C}$ sends ``imaginary'' vanishing cycles to ``imaginary'' ones, for $(h,\lambda) \in D_r$.

Step 6.
It follows that the mapping $\Phi^{\mathbb C}$ either preserves or reverses numbering of the vanishing cycles $\gamma_{\phi(h,\lambda),k}$ of $L_{\phi(h,\lambda)}$, for $(h,\lambda) \in D_r$ (since pairwise intersection numbers $\mod2$ of the vanishing cycles form the identity $n\times n$-matrix). We have two cases.

{
{\em Case 1:} the symplectomorphism $\Phi^{\mathbb C}$ preserves numbering of the vanishing cycles for $(h,\lambda) \in D_r$, so it sends every vanishing cycle $\gamma_{h,\lambda,k}$ of $L_{h,\lambda}$ to the corresponding vanishing cycle $\gamma_{\phi(h,\lambda),k}$ of $L_{\phi(h,\lambda)}$, $(h,\lambda) \in D_r$. Then it preserves orientation of each vanishing cycle (since $\Phi^{\mathbb C}$ is a symplectomorphism and, hence, preserves the action variable $I_k$ corresponding to $\gamma_{h,\lambda,k}$). 
As we will show (see below), $\phi$ is the identity in this case.

{\em Case 2:} the symplectomorphism
$\Phi^{\mathbb C}$ reverses numbering of the vanishing cycles for $(h,\lambda) \in D_r$, so it sends every vanishing cycle $\gamma_{h,\lambda,k}$ of $L_{h,\lambda}$ to the vanishing cycle $\gamma_{\phi(h,\lambda),n+1-k}$ of $L_{\phi(h,\lambda)}$, $(h,\lambda) \in D_r$. Then $n$ is odd (since otherwise ``real'' cycles are sent to ``imaginary'' ones, see \eqref {eq:real}). Since, for odd $n$, the canonical involution $\Psi$ (see \eqref {eq:Psi}) also reverses numbering of the vanishing cycles and satisfies the relation \eqref {eq:Psi:psi} for the involution $\psi$ (see \eqref {eq:psi}), it follows that the symplectomorphism $\Phi\circ\Psi$ 
preserves numbering of the cycles and satisfies the relation $\mathcal F_{can}\circ(\Phi\circ\Psi) = (\phi\circ\psi)\circ\mathcal F_{can}$. It follows from Case 1 that $\phi\circ\psi$ is the identity, hence $\phi=\psi$, as required.

It remains to prove that $\phi$ is the identity in Case 1. 
Since $\Phi^{\mathbb C}$ is a symplectomorphism preserving numbering of vanishing cycles, it preserves action variables, so it preserves the immersion $P$ in \eqref {eq:P}, i.e.,
\begin{equation} \label {eq:PP}
P\circ\phi^{\mathbb C}=P
\end{equation}
on some $D_{r_1}$.
}
But this immersion admits a continuous extension 
$$
P_1 \colon \overline{D_{r}}\to \overline{\Delta} \approx \R^n_{\ge0}
$$
such that $P_1(0,0)=(0,0)$ and $P_1$ sends the branches
$$
\partial_k D_{r} := 
\{ (h,\lambda)\in\R^n \mid 
\lambda_{n-1}>-r,\ \exists q_1 < \dots < q_k = q_{k+1}< \ldots < q_{n+1} \ \forall q\in\R\ 
\prod_{i=1}^{n+1}(q-q_i) = q^{n+1} + \sum_{j=1}^{n-1} \lambda_j q^j - h  \}, 
$$
$1\le k\le n$, $\partial_k D_{r}\subseteq\partial D_{r}\subseteq\Sigma_r$, of the bifurcation diagram 
$$
\Sigma_r := \{(h,\lambda)\in\mathbb R^n \mid  
\lambda_{n-1}>-r,\ 
\mbox{the polynomial } q^{n+1} + \sum_{j=1}^{n-1} \lambda_j q^j - h \mbox{ has a multiple real root} \}
$$ 
(see \eqref {eq:Sigma} and \eqref {eq:swallow}) to the corresponding open facets 
$$
\partial_k\Delta \approx \R_{>0}^{k-1} \times \{0\} \times \R_{>0}^{n-k}, \qquad 1\le k\le n,
$$
of the positive coordinate octant $\Delta\approx\R^n_{>0}$ (see \eqref {eq:I:}).
This implies\footnote{We will give a topological proof of injectivity of the immersion
$P=(I_1,\dots,I_n)|_{D_r} : D_r\to\Delta\approx\R^n_{>0}$ 
(see \eqref {eq:P}) on some $D_{r_1}$.
We have $\delta := \inf(|I_1|+\dots+|I_n|)|_{\partial D_{r}\cap\{\lambda_{n-1}=-r\}}>0$. Consider the ``tetrahedral'' domain $\Delta_\delta := \{ (t_1,\dots,t_n)\in\Delta \mid |t_1|+\dots+|t_n| <\delta\}$ (see \eqref {eq:I:}). 
It is known that the number of preimages (counted with multiplicities)
of any point $t_0 \in\mathbb R^n\setminus P_1(\partial D_{r})$ under a continuous mapping $P_1:\overline{D_{r}}\to\mathbb R^n$ equals index of this point relatively the $(n-1)$-dimensional spheroid $P_1|_{\partial D_{r}}$ (this follows from the theorem on the sum of indices of zeros of a vector field on a bounded domain in $\R^n$, by applying it to the vector field $(h,\lambda)\mapsto P(h,\lambda) - t_0$ on $D_{r}$). Note that all multiplicities of points in $P^{-1}(t_0)$ are equal to $\textup{sgn} \, \det \frac{\partial (I_1,\dots,I_n)}{\partial (h,\lambda)} = 1$ ($P$ being an immersion). But for any point $t_0\in\Delta_\delta$ the index equals 1. We conclude that this point has a unique preimage under $P$. Thus $P$ 
induces a homeomorphism $P^{-1}(\Delta_\delta) \to \Delta_\delta$; in particular, $P$
is injective on $P^{-1}(\Delta_\delta)\subseteq D_{r}$ which obviously contains some $D_{r_1}$.}
that $P$ is injective on $D_{r_1}$ for some $r_1>0$. Since \eqref {eq:PP} holds on $D_{r_1}$, we conclude that $\phi$ is the identity on some $D_{r_2}$, and hence everywhere by uniqueness of analytic continuation.

Theorem \ref {thm:An:loc:} (which is the same as Theorem \ref {thm:An:loc}) is proved.

We remark that the equivariance property of the local action mapping $I=(I_1,\dots,I_n)$ (see Proposition \ref {pro:equivar}) under the (real) involutions $\psi$ and $\chi$ extends to equivariance under the (complex, non-real) diffeomorphisms 
$$
\psi_c : (h,\lambda) \mapsto (c^{2(n+1)} h,\dots,c^{2(n+1-j)}\lambda_j,\dots),
\qquad
\chi_c : (I_1,\dots,I_n) \mapsto (c^{n+3}I_1,\dots,c^{n+3}I_n),
$$
$c\in\CC^*=\CC\setminus\{0\}$, where $\psi_c$ lifts to the diffeomorphism
$$
\Psi_c : (\lambda,\mu,p,q) \mapsto (\dots,c^{2(n+1-j)}\lambda_j,\dots, c^{2j+1-n}\mu_j, \dots, c^{n+1}p,c^2q).
$$
The diffeomorphism $\Psi_c$ is a symplectomorphism if and only if $c\in\{a^k\mid k\in\mathbb Z\}$ where
$a:=e^{\pi i/(m+2)}$ if $n=2m+1$,
$a:=e^{2\pi i/(2m+3)}$ if $n=2m$. 
Thus, if $n=2m$ is even then, in the complex setting, the functions $\tilde H$ and $\tilde J_1,\dots,\tilde J_{n-1}$ are not uniquely defined, since the canonical coordinate change $\Psi_{a}$
transforms 
$(\tilde H,\tilde J_1,\dots,\tilde J_{n-1})$ to
$\psi_{a}(\tilde H,\tilde J_1,\dots,\tilde J_{n-1}) 
= (e^{-8\pi	i/(2m+3)}\widetilde H,\dots,e^{-4\pi i(j+2)/(2m+3)}\tilde J_j,\dots)$.  
\end{proof}

\section {Proof of Theorem \ref {thm:An:semiloc} and Corollary \ref {cor:An:semiloc:class}} \label {app:semiloc}

\begin{theorem} [Symplectic normal form for unfolded $A_n$ singular orbits]
\label{thm:An:semiloc:} 
Let $\mathcal O$ be a compact unfolded $A_n$ singular orbit of a real-analytic integrable system $\mathcal F = (F_1, \dots, F_n) \colon M^{2n} \to \mathbb R^n$, $n\ge2$. 
Consider real-analytic canonical coordinates $\Phi = (\tilde J = \tilde\lambda, \tilde \mu, \tilde p,\tilde q)$ and the functions $(\tilde H,\tilde J_1,\dots,\tilde J_{n-1}) = \phi\circ \mathcal F$ in a neighborhood of some point $P \in \mathcal O$ as in Theorem \ref {thm:An:loc}.
Let $J_j(F_1,\dots,F_n) = J_j\circ\mathcal F$, 
$1\le j\le n-1$, be the first integrals of the system on a neighbourhood of $\mathcal O$ 
generating $2 \pi$-periodic Hamiltonian flows in a neighbourhood of $\mathcal O$ such that 
$J_j(\mathcal F(\mathcal O)) = 0$ and 
$\det(\partial_{\tilde J_i}(J_j\circ\phi^{-1}))_{1\le i,j\le n-1} > 0$ at the origin (they exist, due to \cite{Zung2000}).
Then the functions $\tilde H,\tilde J_1,\dots,\tilde J_{n-1}$ and the symplectic structure $\omega$ have the following (semi-local) normal form near $\mathcal O$:
\begin{equation*} 
\tilde H = \eta P^2 + Q^{n+1} + \sum_{j=1}^{n-1} \lambda_j Q^j, \quad \tilde J_j = \tilde\lambda_j, \qquad
\omega = \sum_{j=1}^{n-1} \textup{d} (J_j\circ\phi^{-1}(\tilde H,\tilde J_1,\dots,\tilde J_{n-1})) \wedge \textup{d} \psi_j + \textup{d}P \wedge \textup{d}Q
\end{equation*}
in some real-analytic coordinates 
$(\tilde J=\lambda, \psi,P,Q) : U \to D^{n-1} \times T^{n-1}\times D^2$ 
on a neighbourhood $U$ of $\mathcal O$, in which $\mathcal O=(0,\dots,0)\times T^{n-1}\times(0,0)$.
The tuple of functions $J\circ\phi^{-1} = (J_1\circ\phi^{-1},\dots,J_{n-1}\circ\phi^{-1})^T$ is rigid in the following sense: 
it is uniquely defined if $n=2$;
it is uniquely defined up to replacing it with $AJ\circ\phi^{-1}$, $A\in SL(n-1,\mathbb Z)$, if $n\ge4$ is even;
it is uniquely defined up to replacing it with $AJ\circ\phi^{-1}$ or $AJ\circ\phi^{-1}\circ\psi$, $A\in SL(n-1,\mathbb Z)$, if $n\ge3$ is odd (for the involution $\psi$ in \eqref {eq:psi}).
\end{theorem}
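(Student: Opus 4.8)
The plan is to derive both the normal form and the rigidity statement from the local normal form (Theorem~\ref{thm:An:loc}) together with the existence of the torus action near $\mathcal O$ due to \cite{Zung2000}; for $n=2$ this recovers the semi-local normal form for parabolic orbits of \cite{Bolsinov2018}. I would first fix, near a point $P\in\mathcal O$, the local symplectic coordinates $(\tilde\lambda,\tilde\mu,\tilde p,\tilde q)$ and the diffeomorphism germ $\phi$ furnished by Theorem~\ref{thm:An:loc}, so that $\tilde H=\eta\tilde p^2+\tilde q^{n+1}+\sum_j\tilde\lambda_j\tilde q^j$, $\tilde J_j=\tilde\lambda_j$ and $\omega=\sum_j\textup{d}\tilde\lambda_j\wedge\textup{d}\tilde\mu_j+\textup{d}\tilde p\wedge\textup{d}\tilde q$. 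By hypothesis and \cite{Zung2000}, the functions $J_j\circ\mathcal F=(J_j\circ\phi^{-1})(\tilde H,\tilde J_1,\dots,\tilde J_{n-1})$ generate a free Hamiltonian $T^{n-1}$-action on a neighbourhood $V$ of $\mathcal O$, with moment map $\vec J=(J_1,\dots,J_{n-1})$. I would then introduce the model $W:=D^{n-1}\times T^{n-1}\times D^2$ with coordinates $(\tilde\lambda,\psi,P,Q)$, the function $\tilde H_W:=\eta P^2+Q^{n+1}+\sum_j\tilde\lambda_j Q^j$, and the $2$-form $\omega_W:=\sum_j\textup{d}\big((J_j\circ\phi^{-1})(\tilde H_W,\tilde\lambda)\big)\wedge\textup{d}\psi_j+\textup{d}P\wedge\textup{d}Q$; a short computation using $\det(\partial_{\tilde J_i}(J_j\circ\phi^{-1}))\ne0$ shows that $\omega_W$ is symplectic, that translation in $\psi$ is precisely the free Hamiltonian $T^{n-1}$-action generated by the functions $(J_j\circ\phi^{-1})(\tilde H_W,\tilde\lambda)$, and that $(W,\omega_W,(\tilde H_W,\tilde\lambda))$ has an unfolded $A_n$ singular orbit along $\{\tilde\lambda=0,\ P=Q=0\}\times T^{n-1}$. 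Proving \eqref{eq:semiloc} then amounts to constructing a real-analytic symplectomorphism $\Phi\colon V\to W$ with $\Phi^*\tilde H_W=\tilde H$, $\Phi^*\tilde\lambda_j=\tilde J_j$, intertwining the two torus actions.

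For the existence of $\Phi$ I would argue by symplectic reduction along the torus actions. Both $(V,\omega)$ and $(W,\omega_W)$ are principal $T^{n-1}$-bundles over their reduced spaces $\mathcal B_V=V/T^{n-1}$ and $\mathcal B_W=W/T^{n-1}$, which are $(n+1)$-manifolds fibered over the $\vec J$-space by the symplectic reduced surfaces $\vec J^{-1}(c)/T^{n-1}$, and the $T^{n-1}$-invariant Hamiltonians $\tilde H$, $\tilde H_W$ descend to reduced Hamiltonians. Since $W$ was built from the standard local model of $V$ by the very same reduction, there is a real-analytic diffeomorphism $g\colon\mathcal B_V\to\mathcal B_W$ that preserves the fibration and the reduced Hamiltonian and is symplectic on each reduced surface (this is where Theorem~\ref{thm:An:loc} is used). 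As $\mathcal B_V$ is contractible the bundle $V\to\mathcal B_V$ is trivial, and on a trivialization $\omega$ is determined by the reduced data up to the choice of a connection $1$-form and of a Lagrangian section; a relative Moser argument — interpolating between $\omega$ and the pullback (via $g$ and a trivialization) of $\omega_W$, both $T^{n-1}$-invariant with the same moment map, and integrating the resulting $T^{n-1}$-equivariant fibre-tangent vector field — upgrades $g$ to the required $\Phi$, and reading off $(\tilde\lambda=\tilde J,\psi,P,Q)$ yields \eqref{eq:semiloc}. This is the semi-local counterpart of the globalization carried out in \cite[Proposition~5.3]{Bolsinov2018} for $n=2$: the $2\pi$-periodicity of the torus flows enters only through the closing-up of the angular coordinates $\psi_j$, and every step preserves real-analyticity.

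For rigidity, suppose two realizations of \eqref{eq:semiloc} for the same system near $\mathcal O$ are given, with data $(\phi,\vec J)$ and $(\phi',\vec J')$. By the rigidity part of Theorem~\ref{thm:An:loc}, $\phi'=\phi$ if $n$ is even and $\phi'\in\{\phi,\psi\circ\phi\}$ if $n$ is odd, and passing from $\phi$ to $\psi\circ\phi$ (while adjusting $\Phi$ by the involution $\Psi$ of \eqref{eq:Psi}) replaces the germ $J\circ\phi^{-1}$ by $J\circ\phi^{-1}\circ\psi$. The periodic integrals are intrinsic to the $T^{n-1}$-action near $\mathcal O$, so $\vec J'$ and $\vec J$ span the same rank-$(n-1)$ lattice of $2\pi$-periodic Hamiltonians, whence $\vec J'=A\vec J$ with $A\in GL(n-1,\Z)$; the normalization $\det(\partial_{\tilde J_i}(J_j\circ\phi^{-1}))>0$ forces $A\in SL(n-1,\Z)$, and this replaces $J\circ\phi^{-1}$ by $AJ\circ\phi^{-1}$. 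Finally, once $\phi$ and $\vec J$ are fixed, the composition of the two normal-form symplectomorphisms is a fibrewise $\mathcal F_{can}$-preserving self-symplectomorphism of $W$ that induces the identity on every reduced surface — on each of them it preserves the reduced Hamiltonian and all local action variables, which by Proposition~\ref{pro:inject} pin down the point — hence it is a fibrewise $\psi$-translation and does not change the germ $J\circ\phi^{-1}$. Collecting these ambiguities gives exactly the asserted list: uniqueness for $n=2$, uniqueness up to $SL(n-1,\Z)$ for even $n\ge4$, and uniqueness up to $SL(n-1,\Z)$ together with the substitution $J\circ\phi^{-1}\mapsto J\circ\phi^{-1}\circ\psi$ for odd $n\ge3$.

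The step I expect to be the main obstacle is the globalization in the existence part — lifting the reduced symplectomorphism $g$ to a genuine symplectomorphism of neighbourhoods of $\mathcal O$ realizing the precise form of $\omega$ in \eqref{eq:semiloc}, i.e.\ matching the purely local normal form near $P$ with the global torus structure around the whole orbit. This requires a Moser argument compatible at once with the free $T^{n-1}$-symmetry and with the singular Lagrangian fibration, with real-analyticity maintained throughout; by contrast, the remaining bookkeeping (the sign conditions separating $SL$ from $GL$ and the role of $\psi$ for odd $n$) is routine once the existence and the local rigidity of Theorem~\ref{thm:An:loc} are available.
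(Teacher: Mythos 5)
Your rigidity argument follows the paper's: local rigidity of $\phi$ from Theorem~\ref{thm:An:loc} (up to $\psi$ for odd $n$), the fact that two tuples of $2\pi$-periodic integrals generating the free $T^{n-1}$-action near $\mathcal O$ differ by an integer matrix, and the positivity normalization on $\det(\partial_{\tilde J_i}(J_j\circ\phi^{-1}))$ cutting $GL(n-1,\Z)$ down to $SL(n-1,\Z)$. Two small remarks there: the paper justifies the integrality of the transition matrix by noting that otherwise one would obtain an effective $T^n$-action near $\mathcal O$, forcing the orbit to be non-degenerate elliptic, which is excluded; and your appeal to Proposition~\ref{pro:inject} to show that the residual ambiguity is a fibrewise translation is not the right reference --- the relevant fact is that an $\mathcal F_{can}$-preserving symplectomorphism covering the identity on the base is a fibrewise Hamiltonian shift (as used in Step~1 of Section~\ref{sec:semiglob}) --- but that step is in any case irrelevant to the germ $J\circ\phi^{-1}$, which a fibrewise translation does not change.

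For existence you take a genuinely different and much heavier route. The paper's construction is one paragraph: keep the local coordinates of Theorem~\ref{thm:An:loc}, set $P=\tilde p$, $Q=\tilde q$ on the transversal disk $\{\tilde\mu=0\}$, propagate $P,Q$ around $\mathcal O$ by the commuting $2\pi$-periodic Hamiltonian flows of $J_1\circ\mathcal F,\dots,J_{n-1}\circ\mathcal F$ so as to make them flow-invariant, and let $\psi$ be the flow time needed to reach a point from the disk; the stated expression for $\omega$ then follows by evaluating $\omega$ on $\partial/\partial\psi_j=X_{J_j\circ\mathcal F}$ and on the transversal, where $\omega=\textup{d}\tilde p\wedge\textup{d}\tilde q$. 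Your model-plus-equivariant-Moser scheme can in principle be made to work, but the step you yourself flag as the main obstacle conceals a genuine obstruction that you do not address: for the Moser vector field to be tangent to the fibres of $\mathcal F$ (so that the resulting symplectomorphism conjugates the energy--momentum maps exactly, not merely the foliations), the primitive $\sigma$ of $\Phi_0^*\omega_W-\omega$ must be chosen to vanish on the fibres, and the obstruction to such a choice is the collection of periods of $\sigma$ over the cycles of the regular fibres, i.e.\ exactly the differences of the action variables of the two $2$-forms. These differences do vanish here --- the torus actions have the same moment map by construction, and the vanishing-cycle actions agree by Theorem~\ref{thm:An:loc} --- but verifying this is the real content of the step, and as written the ``relative Moser argument'' is asserted rather than proved (the exactness of $\Phi_0^*\omega_W-\omega$ on a neighbourhood of $T^{n-1}$, where $H^2$ is nontrivial, and the non-degeneracy of the interpolation also need a word). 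The paper's flow-out construction sidesteps all of this, which is what it buys over your approach.
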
 

\begin{proof}
To prove the semi-local normal form, it suffices to define new coordinates $\psi,P,Q$ as follows. Consider local coordinates
$\tilde \lambda, \tilde \mu,\tilde p, \tilde q$ as in Theorem~\ref{thm:An:loc}.
On the embedded disk $\tilde D^{n+1}$ given by $\tilde\mu = 0$, the new coordinates $P,Q$ simply coincide with $\tilde p, \tilde q$. These coordinates $P,Q$ on $\tilde D^{n+1}$ are then extended to a neighbourhood of the unfolded $A_n$ singular orbit $\mathcal O$ using the $2\pi$-periodic flows 
\begin{equation} \label {eq:flow}
\Phi_t=\phi_{J_1\circ\mathcal F}^{t_1}\circ\dots\circ\phi_{J_{n-1}\circ\mathcal F}^{t_{n-1}}, \qquad t=(t_1,\dots,t_{n-1})\in\R^{n-1},
\end{equation}
of the first integrals $J_1\circ\mathcal F,\dots,J_{n-1}\circ\mathcal F$, by making them invariant under these flows.
The remaining angle coordinates $\psi=(\psi_1,\dots,\psi_{n-1})$ are
simply the ``time'' $t=(t_1,\dots,t_{n-1})$ needed to reach a given point from $\tilde D^{n+1}$ using the flows \eqref {eq:flow} of $J_1\circ\mathcal F,\dots,J_{n-1}\circ\mathcal F$.

The rigidity assertion follows directly from Theorem \ref {thm:An:loc} and the existence of $2\pi$-periodic first integrals $J\circ\mathcal F=(J_1,\dots,J_{n-1})^T\circ\mathcal F$ in a neighbourhood of a compact unfolded $A_n$ singular orbit $\mathcal O$ such that $J\circ\mathcal F=\ldbrack J\circ\mathcal F\rdbrack$, as we now show.
First we will give a scketch of the proof.
Suppose that $n$ is even, then the functions $\tilde H$ and $\tilde J_1,\dots,\tilde J_{n-1}$ are uniquely defined, due to the rigidity property in Theorem \ref {thm:An:loc}.
Observe that $2\pi$-periodic first integrals $J\circ\mathcal F=(J_1,\dots,J_{n-1})^T\circ\mathcal F$ are action variables of the system. Further, by Remark \ref {rem:semiloc:}, if the $SL(n-1,\Z)$-orbits of the corresponding mapping germs coincide then the systems are symplectically equivalent near unfolded $A_n^\eta$ singular orbits. 
Since the conditions $J(0,\dots,0) = 0$,
$\det(\partial_{\tilde J_i}(J_j\circ\phi^{-1}))_{1\le i,j\le n-1} > 0$ at the origin determine the action variables $[J\circ\mathcal F] = [J\circ\phi^{-1} (\tilde H, \tilde J_1,\dots,\tilde J_{n-1})]$ unambiguously, the germ of $[J\circ\phi^{-1}]$ at $(0,\dots,0)$ is indeed a symplectic invariant.
If $n$ is odd, then the diffeomorphism germ $\phi$ is defined up to replacing it with $\psi\circ\phi$. But still we can determine the tuple of functions $\ldbrack J\circ\mathcal F\rdbrack$ unambiguously as one of the mapping germs
$[J\circ\phi^{-1}]$ and $[J\circ\phi^{-1}\circ\psi]$ that is ``not smaller'' with respect to lexicographic order. Then such a germ $\ldbrack J\circ\phi^{-1}\rdbrack$ is indeed a symplectic invariant.

Now let us give a more detailed proof of the rigidity assertion.
Suppose that two integrable systems $(M^i,\omega^i,\mathcal F^i)$ are symplectically equivalent on some neighborhoods $U^i$ of unfolded $A_n^\eta$ singular orbits $\mathcal O^i$ ($i=1,2$).
This means that there exist a symplectomorphism $\widetilde\Phi:U^1\to U^2$ and a diffeomorphism germ $\widetilde\phi:(\R^n,\mathcal F^1(\mathcal O^1))\to (\R^n,\mathcal F^2(\mathcal O^2))$ such that 
$\widetilde\phi\circ\mathcal F^1 = \mathcal F^2 \circ \widetilde\Phi$.
On the other hand, we have a local (in a neighborhood $U(P^i)$ of $P^i$) symplectomorphism $(\Phi^2)^{-1}\circ\Phi^1:U(P^1)\to U(P^2)$ and a diffeomorphism germ $(\phi^2)^{-1}\circ\phi^1:(\R^n,\mathcal F^1(\mathcal O^1))\to (\R^n,\mathcal F^2(\mathcal O^2))$ such that 
$(\phi^2)^{-1}\circ\phi^1\circ\mathcal F^1 = \mathcal F^2 \circ (\Phi^2)^{-1}\circ\Phi^1$.
The rigidity property in Theorem \ref{thm:An:loc} implies that 
\begin{equation} \label {eq:phi:i}
\begin{gathered}
\widetilde\phi = (\phi^2)^{-1}\circ \phi^1 \qquad \mbox{if $n$ is even}; \\
\widetilde\phi = (\phi^2)^{-1}\circ \phi^1 \ \mbox{ or } \
\widetilde\phi = (\phi^2)^{-1}\circ \psi \circ \phi^1 
\quad \mbox{if $n$ is odd}.
\end{gathered}
\end{equation}
By Theorem \ref{thm:An:semiloc}, we can write every system (possibly, in smaller neighborhoods $\widetilde U^i\subset U^i$ of $\mathcal O^i$, $\widetilde\Phi(\widetilde U^1)=\widetilde U^2$) in the standard form \eqref {eq:semiloc}, with some mapping germs $J^i\circ(\phi^i)^{-1}=(J^i_1,\dots,J^i_{n-1})^T\circ(\phi^i)^{-1}: (\R^n,0)\to(\R^{n-1},0)$, $i=1,2$.
Since the functions 
$J^2_1\circ\mathcal F^2,\dots,J^2_{n-1}\circ\mathcal F^2$ generate a (free) Hamiltonian $T^{n-1}$-action on $\widetilde U^2$, it follows that the functions 
$J^2_1\circ\mathcal F^2\circ\widetilde\Phi,\dots,J^2_{n-1}\circ\mathcal F^2\circ\widetilde\Phi$
generate a (free) Hamiltonian $T^{n-1}$-action on $\widetilde U^1$.
But the functions 
$J^1_1\circ\mathcal F^1,\dots,J^1_{n-1}\circ\mathcal F^1$ also generate a (free) Hamiltonian $T^{n-1}$-action on $\widetilde U^1$. 
It follows that there exists an integer matrix $A=(a^i_j)\in SL(n-1,\Z)$ such that 
$J^1_i = \sum_{j=1}^{n-1} a^i_j J^2_j \circ\widetilde\phi$, $1\le i\le n-1$ (otherwise we would have an effective $\mathcal F^1$-preserving $T^n$-action near the orbit $\mathcal O^1$, which would imply that this orbit is non-degenerate of elliptic type \cite[Theorem 3.4]{Kudryavtseva2020}, which is possible only in the case of $n=1$ and $\eta=+1$, but this case contradicts the assumption of Corollary).
Thus, the $SL(n-1,\Z)$-orbits of the mapping germs $J^1$ and $J^2\circ\widetilde\phi$ coincide.
Taking into account \eqref {eq:phi:i}, we conclude that either 
the $SL(n-1,\Z)$-orbits of the mapping germs 
$J^1\circ(\phi^1)^{-1}$ and $J^2\circ(\phi^2)^{-1}$ coincide,
or $n$ is odd and the $SL(n-1,\Z)$-orbits of the mapping germs 
$J^1\circ(\phi^1)^{-1}$ and $J^2\circ(\phi^2)^{-1}\circ\psi$ coincide.
\end{proof}

\begin{corollary} [Symplectic classification of unfolded $A_n$ singular orbits] \label {cor:An:semiloc:class:}
Under the hypotheses of Theorem \ref {thm:An:semiloc}, let $[J\circ\phi^{-1}]: (\R^n,0)\to(\R^{n-1},0)$ be the special mapping germ (see Remark \ref{rem:semiloc}) in the $SL(n-1,\Z)$-orbit of the mapping germ $J\circ\phi^{-1}: (\R^n,0)\to(\R^{n-1},0)$.
If $n$ is odd, take one of the mapping germs $[J\circ\phi^{-1}], [J\circ\phi^{-1}\circ\psi]: (\R^n,0)\to(\R^{n-1},0)$ which is ``not smaller'' with respect to lexicographic order, and denote it by $\ldbrack J\circ\phi^{-1}\rdbrack$; otherwise denote $[J\circ\phi^{-1}]$ by $\ldbrack J\circ\phi^{-1}\rdbrack$.
Then the real-analytic mapping germ $\ldbrack J\circ\phi^{-1}\rdbrack$ classifies the singularity at the orbit $\mathcal O$ up to real-analytic (semi-local, left-right) symplectic equivalence. 
In other words, two integrable systems $(M^i,\omega^i,\mathcal F^i)$ are symplectically equivalent near their unfolded $A_n^\eta$ singular orbits $\mathcal O^i$ ($i=1,2$, $n\ge2$) if and only if $\ldbrack J^1\circ(\phi^1)^{-1}\rdbrack = \ldbrack J^2\circ(\phi^2)^{-1}\rdbrack$.
Here $\Phi^i$ are symplectic coordinates centered at a point $P^i\in\mathcal O^i$ in which the $i$-th system has the local normal form 
$\phi^i\circ\mathcal F^i = \mathcal F^i_{can}\circ\Phi^i$ for some diffeomorphism germ $\phi^i:(\R^n,\mathcal F^i(\mathcal O^i))\to (\R^n,0)$.
\end {corollary}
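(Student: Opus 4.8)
The plan is to obtain Corollary~\ref{cor:An:semiloc:class:} as a bookkeeping consequence of Theorem~\ref{thm:An:semiloc} (its normal form together with its rigidity clause) and of the canonical--representative construction of Remark~\ref{rem:semiloc}. First I would recall from Remark~\ref{rem:semiloc} that each $SL(n-1,\Z)$-orbit of germs $f\colon(\R^n,0)\to(\R^{n-1},0)$ with linearly independent (positively oriented) $\tilde\lambda$-derivatives contains exactly one \emph{special} germ, so the operation $[\,\cdot\,]$ is well defined and $[f]=[g]$ holds iff $f$ and $g$ lie in one $SL(n-1,\Z)$-orbit. I would also observe that composition with $\psi$ commutes with the $SL(n-1,\Z)$-action (the former acts on the source $\R^n$, the latter by integer linear combinations of the $n-1$ components in the target), so $f\mapsto f\circ\psi$ descends to an involution $[f]\mapsto[f\circ\psi]$ on the set of special germs, and $\ldbrack f\rdbrack$ is by definition the lexicographic maximum of the pair $\{[f],[f\circ\psi]\}$ (and simply $[f]$ when $n$ is even).

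For the ``only if'' part I would take a symplectic equivalence of $(M^1,\omega^1,\mathcal F^1)$ and $(M^2,\omega^2,\mathcal F^2)$ near their $A_n^\eta$ singular orbits and invoke the rigidity clause of Theorem~\ref{thm:An:semiloc}: the germs $J^1\circ(\phi^1)^{-1}$ and $J^2\circ(\phi^2)^{-1}$ share an $SL(n-1,\Z)$-orbit when $n$ is even, and when $n$ is odd the $SL(n-1,\Z)$-orbit of $J^1\circ(\phi^1)^{-1}$ equals that of $J^2\circ(\phi^2)^{-1}$ or of $J^2\circ(\phi^2)^{-1}\circ\psi$. By uniqueness of the special germ this gives $[J^1\circ(\phi^1)^{-1}]=[J^2\circ(\phi^2)^{-1}]$ for even $n$, and for odd $n$ one of $[J^1\circ(\phi^1)^{-1}]=[J^2\circ(\phi^2)^{-1}]$, $[J^1\circ(\phi^1)^{-1}]=[J^2\circ(\phi^2)^{-1}\circ\psi]$. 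Composing everything with $\psi$ (using the commutation noted above, together with $\psi^2=\id$) then shows the unordered pairs $\{[J^i\circ(\phi^i)^{-1}],[J^i\circ(\phi^i)^{-1}\circ\psi]\}$ coincide, hence so do their lexicographic maxima, i.e.\ $\ldbrack J^1\circ(\phi^1)^{-1}\rdbrack=\ldbrack J^2\circ(\phi^2)^{-1}\rdbrack$.

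For the ``if'' part, suppose $\ldbrack J^1\circ(\phi^1)^{-1}\rdbrack=\ldbrack J^2\circ(\phi^2)^{-1}\rdbrack=:g$. For even $n$ this reads $[J^1\circ(\phi^1)^{-1}]=[J^2\circ(\phi^2)^{-1}]$, so the two germs share an $SL(n-1,\Z)$-orbit; by Remark~\ref{rem:semiloc:} — bringing both systems to the form \eqref{eq:semiloc} and performing a suitable $SL(n-1,\Z)$ change of basis on $T^{n-1}$ — the systems become symplectically equivalent near the orbits. For odd $n$, after possibly replacing $\phi^1$ by $\psi\circ\phi^1$ (an equally valid normalizing germ by the rigidity clause of Theorem~\ref{thm:An:loc}; this changes neither $\ldbrack\,\cdot\,\rdbrack$ nor the conclusion and swaps $[J^1\circ(\phi^1)^{-1}]\leftrightarrow[J^1\circ(\phi^1)^{-1}\circ\psi]$) I may assume $[J^1\circ(\phi^1)^{-1}]=g$; then $g$ equals $[J^2\circ(\phi^2)^{-1}]$ or $[J^2\circ(\phi^2)^{-1}\circ\psi]$, so $J^1\circ(\phi^1)^{-1}$ lies in the $SL(n-1,\Z)$-orbit of $J^2\circ(\phi^2)^{-1}$ or of $J^2\circ(\phi^2)^{-1}\circ\psi$, and Remark~\ref{rem:semiloc:} together with the explicit involution $\Psi$ of \eqref{eq:Psi}--\eqref{eq:Psi:psi} (which realizes the equivalence of the two normal forms related by $\psi$) again produces the desired symplectomorphism.

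I expect the one genuinely delicate point to be the odd-$n$ bookkeeping. One must confirm that $[J\circ\phi^{-1}\circ\psi]$ really is well defined: the recipe of Remark~\ref{rem:semiloc} presupposes a positively oriented frame of $\tilde\lambda$-derivatives, whereas $\psi$ reverses this orientation whenever $m$ is odd (that is, $n\equiv 3\pmod 4$, already for $n=3$), so the recipe has to be read as applied after a fixed coordinate reflection — which alters neither the generated lattice nor any of the lengths used, hence still produces a canonical representative in the $SL(n-1,\Z)$-orbit of $J\circ\phi^{-1}\circ\psi$. One must also check that taking the lexicographic maximum of $\{[J\circ\phi^{-1}],[J\circ\phi^{-1}\circ\psi]\}$ is indeed a complete invariant for the combined $SL(n-1,\Z)\times\Z/2$ action, i.e.\ that these two special germs are exactly the $\psi$-images of one another so that the $\Z/2$-factor permutes them; this is what makes both implications above close up. Everything else is a routine transfer of facts already proved in the excerpt, and for $n=2$ the statement reduces to $J^1\circ(\phi^1)^{-1}=J^2\circ(\phi^2)^{-1}$, recovering the known parabolic case.
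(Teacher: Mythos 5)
Your proof is correct and follows essentially the same route as the paper's: necessity is obtained from the rigidity clause of Theorem~\ref{thm:An:semiloc} combined with the uniqueness of the special representative from Remark~\ref{rem:semiloc}, and sufficiency by bringing both systems to the normal form \eqref{eq:semiloc} and absorbing the residual $SL(n-1,\Z)$- and $\psi$-ambiguities via Remark~\ref{rem:semiloc:} and the involution \eqref{eq:Psi}--\eqref{eq:Psi:psi} --- the paper merely packages this last step as the statement that both systems are symplectomorphic to one explicit model $(C,\omega,\mathcal F)$ built from the common germ. Your side remark that $\psi$ reverses the orientation of the $\tilde\lambda$-frame when $n\equiv 3\pmod 4$, so that $[J\circ\phi^{-1}\circ\psi]$ requires the recipe of Remark~\ref{rem:semiloc} to be read modulo a fixed reflection, is a legitimate point of care that the paper passes over silently.
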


\begin{proof}
It follows from Theorem \ref {thm:An:semiloc} (coinciding with Theorem \ref {thm:An:semiloc:}), Remark \ref {rem:semiloc} and \eqref {eq:Psi:psi} that the mapping germ $\ldbrack J\circ\phi^{-1}\rdbrack$ is a symplectic invariant of the singular orbit.

To show that a real-analytic germ 
$\ldbrack J\circ\phi^{-1}\rdbrack$ is the only invariant, and it admits arbitrary values, we now construct a `model' real-analytic integrable system with an unfolded $A_n$ singular orbit from such a germ.
Consider the phase space $C := \mathbb R^{n-1}\times T^{n-1}\times \R^2$ with coordinates
$(\tilde J = \tilde \lambda, \psi, P,Q)$ and
define the Hamilton function $\tilde H = H_{can}^{\tilde\lambda}(P,Q)$ on it.  
Suppose that we are given any mapping germ $f=(f_1,\dots,f_{n-1}):(\R^n,0)\to(\R^{n-1},0)$ such that 
$\det(\partial_{\tilde J_i}f_j)_{1\le i,j\le n-1} > 0$ at the origin. Then the desired integrable system is given by 
$$
(C,\ \omega ,\ \mathcal F), \qquad 
\omega = \sum_{j=1}^{n-1} \textup{d}f_j(\tilde H,\tilde J)
\wedge \textup{d}\psi_j + \textup{d}P\wedge \textup{d}Q, \quad 
\mathcal F = (\tilde H, \tilde{J});
$$
here the 2-form $\omega$ is non-degenerate, since the indicated determinant does not vanish.
By the construction, it is an integrable system with an unfolded $A_n$ singular 
orbit $P=Q= 0$, $\tilde\lambda = 0$. Moreover, due to Theorem \ref {thm:An:semiloc}, every real-analytic unfolded $A_n$ semi-local singularity is symplectically equivalent to such a model.
\end{proof}

\section {Acknowledgments} 

The author expresses her gratitude to A.V.\ Bolsinov and N.N.\ Martynchuk for insightful discussions that contributed to the preparation of this paper.

\section {Funding} 

The work was supported by the Russian Science Foundation under grant no.~24-71-10100
and performed at Lomonosov Moscow State University. The other part of the work was performed within the framework of the Program for the development of the Volga Region Scientific-Educational Centre of Mathematics (agreement no.~075-02-2024-1438).

\vspace{0.5cm}

\noindent
E.A. Kudryavtseva, Faculty of Mechanics and Mathematics, Moscow State University, Moscow 119991, Russia. \\ 
Moscow Center of Fundamental and Applied Mathematics, Moscow, Russia. \\ 
\textit{E-mail: eakudr@mech.math.msu.su} 

\end{document}